\documentclass[12pt]{article}
\usepackage{amsmath, amssymb, amsfonts, amsthm, epsfig, graphicx}
\usepackage[all]{xy}
\pagestyle{empty}

\title{New Uniform Diameter Bounds in Pro-$p$ Groups}
\author{Henry Bradford \\ University of Oxford}
\date{}

\begin{document}

\newtheorem{thm}{Theorem}[section]
\newtheorem{lem}[thm]{Lemma}
\newtheorem{propn}[thm]{Proposition}
\newtheorem{coroll}[thm]{Corollary}
\newtheorem{defn}[thm]{Definition}
\newtheorem{ex}[thm]{Example}
\newtheorem{notn}[thm]{Notation}
\newtheorem{conj}[thm]{Conjecture}
\newtheorem{note}[thm]{Note}
\newtheorem{rmrk}[thm]{Remark}

\maketitle

\begin{abstract}
We give new upper bounds for the diameters of finite groups which do not depend on a choice of generating set. 
Our method exploits the commutator structure of certain profinite groups, 
in a fashion analogous to the Solovay-Kitaev procedure from quantum computation. 
We obtain polylogarithmic upper bounds for the diameters of finite quotients of: 
groups with an analytic structure over a pro-$p$ domain (with exponent depending on the dimension); 
Chevalley groups over a pro-$p$ domain (with exponent independent of the dimension) 
and the Nottingham group of a finite field. 
We also discuss some consequences of our results for random walks on groups. 
\end{abstract}

\section{Introduction}

The interplay between growth, spectral gap and diameter for finite groups has been a highly active area of study within group theory in recent years. Given a finite group $G$ and a generating set $S \subseteq G$, 
recall that the \emph{diameter} of the pair $(G,S)$ is given by:
\begin{center}
$diam(G,S) = min \lbrace n \in \mathbb{N} : B_S (n) = G \rbrace$
\end{center} 
where $B_S (n)$ is the (closed) \emph{word-ball} of radius $n$, given by:
\begin{center}
$B_S (n) = \lbrace s_1 \cdots s_n : s_1 , \ldots , s_n \in S \cup S^{-1} \cup \lbrace 1 \rbrace \rbrace$. 
\end{center}
In this paper we investigate techniques for establishing upper bounds for $diam(G,S)$ 
(such bounds will usually be expressed as a function of $\lvert G \rvert$). 

Meanwhile the \emph{spectral gap} of the pair $(G,S)$ is a measure of the \emph{mixing time} 
of the simple random walk on $(G,S)$: that is, if $(G,S)$ 
has large spectral gap then only a small number of steps  must be taken in such a walk before 
the associated probability distribution on $G$ is close to uniform 
(we make these notions and their relationship to diameter precise later). 

Our interest shall be in upper bounds for the diameter which do not depend on the generators. 
With this in mind, we define for a finite group $G$:
\begin{center}
$diam(G) = max \lbrace diam(G,S) : S \subseteq G , \langle S \rangle = G \rbrace$. 
\end{center}
This quantity is referred to by many authors as the \emph{worst-case diameter} for $G$.

\subsection{Statement of Results}

Fix $p$ prime. Let $R$ be a commutative unital Noetherian ring. 
Recall that $R$ is called a \emph{local ring} if $R$ has a unique non-zero maximal ideal $\mathcal{M}$ 
(we shall refer to \emph{the local ring $(R,\mathcal{M})$}). 
The quotient $R / \mathcal{M}$ is called the \emph{residue field} of $R$. 
There is a topology on $R$, called the \emph{$\mathcal{M}$-adic topology}, 
induced by declaring the filtration $(\mathcal{M}^n)_n$ to be a basis for the neighbourhoods of $0$. 

\begin{defn}
The local ring $(R,\mathcal{M})$ is called a \emph{pro-$p$ ring} if: 
\begin{itemize}
\item[(i)] The residue field of $R$ is finite of characteristic $p$ 
\item[(ii)] $R$ is complete with respect to the $\mathcal{M}$-adic topology. 
\end{itemize}
A pro-$p$ ring $(R,\mathcal{M})$ where $\mathcal{M}$ is principal is called a \emph{discrete valuation pro-$p$ ring}
and a pro-$p$ ring which is an integral domain will be called a \emph{pro-$p$ domain}. 
\end{defn}

Let $\mathbb{K}$ be the field of fractions of $R$. 
Fix $c \in (0,1)$ and define a norm $\lVert \cdot \rVert$ on $R$ 
(compatible with the $\mathcal{M}$-adic topology) by:
\begin{center}
$\lVert a \rVert = c^n$ for $a \in \mathcal{M}^n \setminus \mathcal{M}^{n+1}$; $\lVert 0 \rVert = 0$. 
\end{center}
In particular if $(R,\mathcal{M})$ is a discrete valuation ring, 
with $\mathcal{P} \in \mathcal{M}$ such that $\mathcal{M} = (\mathcal{P})$,
then $\lVert \mathcal{P} \rVert = c$. In this case, we extend $\Vert \cdot \rVert$ to $\mathbb{K}$ via:
\begin{center}
$\lVert a \rVert = \lVert a \mathcal{P}^n \rVert c^{-n}$ for $n$ sufficiently large that $a \mathcal{P}^n \in R$. 
\end{center}
In what follows we take $(R,\mathcal{M})$ to be a pro-$p$ domain and a discrete valuation ring. 
By work of Cohen \cite{Cohen} every such $R$ arises as a finitely generated free module over 
a subring of the form $\mathbb{Z}_p$ or $\mathbb{F}_p [[t]]$. 

Our first result concerns compact groups with a compatible structure as an $R$-analytic manifold. 
Every such group has an open subgroup with an especially simple $R$-analytic structure, 
called an \emph{$R$-standard group}. Precise definitions are given in Section \ref{AnalyticSection}. 
In a $d$-dimensional $R$-analytic group $G$, an $R$-standard subgroup may be identified 
(as a space) with the product space $\mathcal{M}^{(d)}$; the balls $(\mathcal{M}^n)^{(d)}$ around $0$ 
form a filtration by open normal subgroups. 
The commutator structure in $\mathcal{M}^{(d)}$ is controlled by the Lie algebra $\mathcal{L}_G$. 
Recall that a Lie algebra $\mathcal{L}$ is called \emph{perfect} if $\mathcal{L}$ is equal to 
its derived subalgebra $(\mathcal{L},\mathcal{L})$. 

\begin{thm} \label{Rstandarddiam}
Let $G$ be a $d$-dimensional $R$-standard group, \linebreak $K_n = (\mathcal{M}^n)^{(d)} \vartriangleleft_o G$. 
Suppose $\mathcal{L}_{G}$ is perfect. Then there exist $C_1 (G)$, $C_2 (d) > 0$ such that:
\begin{center}
$diam(G / K_n) \leq C_1 (log \lvert G/K_n \rvert)^{C_2}$. 
\end{center}
\end{thm}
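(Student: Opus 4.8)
The plan is to exploit the filtration $(K_n)_n$ and the perfectness of $\mathcal{L}_G$ to write group elements as short products of commutators, in the style of the Solovay--Kitaev procedure. Given a generating set $S$ for $G/K_n$, the first step is a reduction to level one: I would show that the image of $B_S(m)$ in $G/K_1$ must be all of $G/K_1$ for $m$ polylogarithmic in $|G/K_n|$, simply because $G/K_1$ is a fixed finite group (of order bounded in terms of $d$ and the residue field), so every element of $K_1/K_n$ is a product of a bounded number of conjugates of generators, i.e. we may assume that the generating set already contains a spanning set of ``small'' elements lying deep in the filtration after paying a polylog cost. More precisely, since $|G/K_1|$ is fixed, $diam(G/K_1)$ is an absolute constant, so after $O(1)$ steps we reach $K_1/K_n$, and it suffices to bound the diameter of $K_1/K_n$ with respect to the induced word metric.

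The heart of the argument is an induction on the level $n$, showing $diam(K_1/K_n)$ grows polylogarithmically in $|K_1/K_n|$, equivalently polynomially in $n$ (since $|K_j/K_{j+1}|$ is constant in $j$). The inductive step uses perfectness of $\mathcal{L}_G$: because $(\mathcal{L}_G,\mathcal{L}_G)=\mathcal{L}_G$, the commutator map induces a surjection $K_i/K_{i+1} \times K_j/K_{j+1} \to K_{i+j}/K_{i+j+1}$ (at least for $i+j$ beyond some bounded range where small-prime or low-level corrections might appear), so any element of $K_m/K_{m+1}$ can be written as a bounded-length product of commutators $[x,y]$ with $x \in K_{\lceil m/2\rceil}$, $y \in K_{\lfloor m/2\rfloor}$. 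By the inductive hypothesis such $x$ and $y$ have word length $\mathrm{poly}(m/2)$, so $[x,y]$ has word length $\mathrm{poly}(m/2)$ as well, and hence elements of $K_m/K_{m+1}$ cost $\mathrm{poly}(m/2)$; telescoping over all levels from $1$ to $n$ gives $diam(K_1/K_n) = O(n \cdot \mathrm{poly}(n)) = \mathrm{poly}(n)$. The doubling in the depth of the commutators is exactly what makes the exponent in the final bound absolute up to dependence on $d$: each halving of the level costs only a constant factor, so the total exponent $C_2$ depends only on how many generators of $K_{\lceil m/2\rceil}/K_{\lceil m/2\rceil+1}$ (and hence, via perfectness, how many commutators) are needed, which is controlled by $d$.

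I expect the main obstacle to be making the commutator surjectivity statement precise and uniform at \emph{every} level. Perfectness of the Lie algebra gives surjectivity of $(\mathcal{L}_G,\mathcal{L}_G) \to \mathcal{L}_G$ as $R$-modules, but translating this into a statement about the finite quotients $K_i/K_{i+1}$ requires care: one must control the number of commutator brackets needed to express a module generator (this should be bounded in terms of $d$ and the rank of $R$ over its prime subring), and one must handle the low levels where the group commutator $[x,y]$ and the Lie bracket differ by higher-order terms (the Baker--Campbell--Hausdorff type corrections in the $R$-standard setting). The cleanest route is probably to fix, once and for all, finitely many elements $a_1,\dots,a_r$ of $\mathcal{M}^{(d)}$ and pairs $(b_k,c_k)$ with $\sum [b_k,c_k]$ generating $\mathcal{L}_G$ over $R$, then argue that multiplying $b_k,c_k$ by elements of $\mathcal{M}^j$ scales the bracket into $\mathcal{M}^{2j+O(1)}$ exactly, using the valuation structure and that $R$ is a domain. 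A secondary technical point is the dependence of $C_1$ on $G$ (and not just $d$): this absorbs the fixed cost $diam(G/K_1)$ together with the bounded number of initial levels where the BCH corrections obstruct the clean scaling, which is why $C_1$ is allowed to depend on $G$ while $C_2$ depends only on $d$.
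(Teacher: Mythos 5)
Your overall strategy coincides with the paper's: a Solovay--Kitaev recursion in which perfectness of $\mathcal{L}_G$ is used to express deep elements as bounded products of commutators of roughly half-depth elements, with the formal group law controlling the discrepancy between group commutators and Lie brackets. The key lemma you sketch at the end (fix finitely many pairs whose brackets generate, then rescale by powers of $\mathcal{P}$) is exactly Proposition \ref{Fabbrack}, and the bound $A=d$ on the number of brackets is the one the paper uses.

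There is, however, a concrete error in your inductive step. Perfectness of $\mathcal{L}_G = L(G)\otimes_R\mathbb{K}$ is a statement over the fraction field: it yields only that the $R$-span $W$ of the brackets contains $\mathcal{P}^k L(G)$ for some $k=k(G)\ge 0$, not that $W=L(G)$. Hence the map $K_i/K_{i+1}\times K_j/K_{j+1}\to K_{i+j}/K_{i+j+1}$ induced by the commutator need not be surjective for \emph{any} $i,j$ --- if, say, $W=\mathcal{P}^2L(G)$, its image is trivial at every level --- so the obstruction is not confined to ``a bounded range'' of $i+j$ as you suggest. What is true is the shifted statement: sums of brackets of elements of $K_N\times K_M$ cover $K_{N+M+k-1}$. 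Feeding this into your level-by-level induction causes two problems. First, by Lemma \ref{SKlem1}, approximating each commutator entry only one level below where it lives determines $[x,y]$ only modulo $K_{m-k+3}$, which is not contained in $K_{m+1}$ once $k\ge 3$; you must approximate the entries to depth about $k$ below their own level. Second, achieving that depth by iterating your one-level-at-a-time hypothesis multiplies the recursion constant by roughly $k$, so the exponent you extract depends on $k(G)$ and not on $d$ alone --- i.e.\ $C_2$ would depend on $G$, contrary to the statement being proved. The paper avoids both issues by running the recursion on the blocks $K_n/K_{3n}$ (Proposition \ref{SolovayKitaev2}): the approximation depth is then automatically proportional to $n$, and the constant $k$ (the $M_2$ of that proposition) is absorbed into the starting level $n_0$ and hence into $C_1$ only. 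Your argument closes once recast in this block form.
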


In the case $R = \mathbb{Z}_p$, we can say more, exploiting the concept of a \emph{uniform} subgroup 
and associated additional features of the Lie theory (explained in detail in Section \ref{padicanalyticSection}). 
Every $\mathbb{Z}_p$-standard group is uniform 
and every compact $p$-adic analytic group has an open characteristic uniform subgroup. 
In a $\mathbb{Z}_p$-standard group $G$, the balls $K_n$ described in Theorem \ref{Rstandarddiam} 
coincide with the terms of the lower central $p$-series for $G$. 

\begin{defn}
Let $G$ be a profinite group. $G$ is \emph{FAb} if every open subgroup has finite abelianisation. 
\end{defn}

\begin{thm} \label{padicdiam}
Let $p \geq 3$. Let $G$ be a $d$-dimensional compact $p$-adic analytic group. 
Let $K_1 \leq G$ be an open characteristic uniform subgroup; 
$(K_n)_n$ its lower central $p$-series. 
If $G$ is FAb then there exist $C_1 (G) , C_2 (d) > 0$ such that: 
\begin{equation} \label{padicdiamineq}
diam(G / K_n) \leq C_1 (log \lvert G / K_n \rvert)^{C_2}. 
\end{equation}
For $G=K_1$ then conversely: if $G$ satisfies (\ref{padicdiamineq}) then $G$ is FAb. 
\end{thm}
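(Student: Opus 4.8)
The plan is to prove the two implications separately. The converse is a soft lower bound, so essentially all of the work is in the forward direction, which I would attack by the Solovay--Kitaev-style recursion underlying Theorem~\ref{Rstandarddiam}, modified to cope with the fact that the relevant Lie algebra is only ``perfect up to a bounded $p$-power''.

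First I would reduce to the case $G = K_1$. Since $K_1$ is open of index $m := [G:K_1]$ and $K_n$ is characteristic in $G$ (being characteristic in $K_1$, which is characteristic in $G$), a Schreier-generators argument gives $diam(G/K_n) \leq (2m-1)\,diam(K_1/K_n) + m$ for all $n$: any generating set of $G/K_n$ produces, via a transversal, a generating set of $K_1/K_n$ whose word metric is dominated by the worst-case metric of $K_1/K_n$. As $m$ depends only on $G$ and $|K_1/K_n| \leq |G/K_n|$, it suffices to bound $diam(K_1/K_n)$; and FAb passes to open subgroups, so $K_1$ is FAb. Thus I may assume $G$ uniform and FAb, with $K_n = G^{p^{n-1}}$ its lower central $p$-series and $|G/K_n| = p^{d(n-1)}$. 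Next I would convert FAb into a statement about the filtration: for $G$ uniform, FAb is equivalent to perfection of $\mathfrak{g} := \mathcal{L}_G \otimes_{\mathbb{Z}_p} \mathbb{Q}_p$ (a non-perfect $\mathfrak{g}$ has a nonzero abelian quotient, forcing $G^{ab} \cong \mathcal{L}_G/[\mathcal{L}_G,\mathcal{L}_G]$ to be infinite). Perfection of $\mathfrak{g}$ says $[\mathcal{L}_G,\mathcal{L}_G]$ is a full-rank sublattice, so $p^N \mathcal{L}_G \subseteq [\mathcal{L}_G,\mathcal{L}_G]$ for some $N = N(G)$; pushing this through $\log$ and the Baker--Campbell--Hausdorff formula (using that $G$ is powerful) yields $\overline{[K_a,K_b]}\,K_{a+b+N} \supseteq K_{a+b}$ for all $a,b \geq 1$. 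In words: commutators of a depth-$a$ element with a depth-$b$ element realise, up to a correction of depth at most $N$, the whole layer $K_{a+b}/K_{a+b+1}$ --- this is the exact analogue, with a bounded depth-shift, of the role played by perfection of $\mathcal{L}_G$ in Theorem~\ref{Rstandarddiam}.

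Now comes the recursion. I would prove $diam(G/K_j) \leq C_1 j^{C_2}$ by strong induction on $j$: given a target coset in $G/K_j$, use the inductive hypothesis at depth $j' := \lceil \lambda j \rceil$ (for a fixed $\lambda \in (1/2,1)$) to hit it modulo $K_{j'}$ by a word $w$ of length $\leq C_1 (j')^{C_2}$; the residue $E := g w^{-1} \in K_{j'}$ is then written modulo $K_j$ as a short product of commutators $[\tilde a_t, \tilde b_t]$ of words $\tilde a_t, \tilde b_t$ supplied by the inductive hypothesis at a shallower depth (roughly $j/2$), whose existence and directions are precisely what the previous paragraph furnishes, with the shift $N$ absorbed harmlessly. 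The word-length recursion then has the shape $\ell(j) \leq C\,\ell(\lceil \lambda j \rceil) + O(1)$ with $C = C(d)$, hence $\ell(j) = O\!\left(j^{\log_{1/\lambda} C}\right)$, and since $\log|G/K_j| = d(j-1)\log p \asymp j$ this is the claimed polylogarithmic bound with exponent depending only on $d$. The step I expect to be the main obstacle is exactly this accounting: one must keep the number of commutators per recursion stage --- and hence the blow-up constant $C$ --- bounded in terms of $d$ alone (each layer $K_i/K_{i+1}$ is $d$-dimensional over $\mathbb{F}_p$, and only boundedly many layers are treated per stage), while simultaneously controlling how the errors introduced by using only finitely-accurate inductive words propagate under the commutators. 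This is precisely the error analysis that makes the Solovay--Kitaev procedure work.

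Finally, the converse. Suppose $G = K_1$ is uniform, satisfies (\ref{padicdiamineq}), but is not FAb. Then $\mathfrak{g}$ is not perfect, so there is a continuous surjection $\psi \colon G \twoheadrightarrow \mathbb{Z}_p$; as $K_n = G^{p^{n-1}}$ we get $\psi(K_n) = p^{n-1}\mathbb{Z}_p$, hence a surjection $G/K_n \twoheadrightarrow \mathbb{Z}/p^{n-1}$. Since $\ker\psi$ is open and its image in $G/\Phi(G)$ is the hyperplane $\ker(\bar\psi)$, one may choose a generating set $S$ of $G$ of size $d$ exactly one of whose elements maps onto a generator of $\mathbb{Z}/p^{n-1}$, the others lying in $\ker\psi$; then $diam(G/K_n,S) \geq diam(\mathbb{Z}/p^{n-1},\{1\}) = \lfloor p^{n-1}/2 \rfloor$. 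As $|G/K_n| = p^{d(n-1)}$, this lower bound grows like $|G/K_n|^{1/d}$, faster than any power of $\log|G/K_n|$, contradicting (\ref{padicdiamineq}) for large $n$. Hence $G$ is FAb. This direction presents no serious obstacle, being a lower bound read off directly from an abelian quotient.
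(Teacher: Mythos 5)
Your route is in substance the paper's own: reduce to the uniform subgroup (your Schreier step is Lemma \ref{easyextnlem}(ii); the paper instead runs its recursion directly on $G$ with the filtration $K_n=H_{n+1}$), translate FAb into perfection of the Lie algebra (Proposition \ref{Fabalg}), convert perfection into a commutator-realization statement with a bounded shift, and feed this into a Solovay--Kitaev recursion (Proposition \ref{SolovayKitaev2} via the proof of Theorem \ref{Rstandarddiam}); your converse is the paper's argument. The gap is in the quantitative heart that you defer. First, the statement you extract from $p^N\mathcal{L}_G\subseteq[\mathcal{L}_G,\mathcal{L}_G]$, namely $\overline{[K_a,K_b]}\,K_{a+b+N}\supseteq K_{a+b}$, is not the form the recursion needs: one needs \emph{boundedly many} commutators (at most $d$, as in Proposition \ref{Fabbrack}) and an error that is \emph{quadratically} deep, i.e.\ every $g\in K_{a+b+N}$ is congruent, modulo $K_{a+b+N+\min(a,b)}$ up to a bounded shift, to a product of at most $d$ commutators from $K_a\times K_b$. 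The closed-commutator-subgroup statement gives neither the bounded count nor the error depth, and without both the word-length recursion $\ell(j)\leq C(d)\,\ell(\lceil\lambda j\rceil)+O(1)$ does not close.

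Second, the depth bookkeeping as you parametrize it fails. The inductive hypothesis at depth $j'=\lceil\lambda j\rceil$ supplies words accurate only modulo $K_{j'}$, so by Lemma \ref{SKlem1} a commutator of ideal elements at depths $u,v$ with $u+v\approx j'$ is reproduced only modulo $K_{\min(u,v)+j'}$, and $\min(u,v)\leq j'/2$; hence a single stage bridges from $j'$ to $j$ only when $\frac{3}{2}\lambda j\gtrsim j$, i.e.\ $\lambda>2/3$ (with room for $N$ at large $j$) --- not for an arbitrary fixed $\lambda\in(1/2,1)$. Worse, if one reads literally your choice of approximating words ``supplied by the inductive hypothesis at depth roughly $j/2$'', the commutator error lands at depth about $\frac{1}{2}(1+\lambda)j<j$ for \emph{every} $\lambda<1$, so that step fails outright. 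The repair is exactly the content of Proposition \ref{SolovayKitaev2}: pass from accuracy $3n$ to $6n$ through three intermediate stages with $n_i+m_i=(2+i)n-M_2$ and $n_i\geq n+M_2$ (this is where the constraint $\frac{n}{3}(2+i+\epsilon)\leq n_i\leq m_i$ comes from), combined with Proposition \ref{Fabbrack} giving $A=d$. With those corrections your recursion and the exponent depending only on $d$ are right, and the remaining ingredients (the Schreier reduction, FAb passing to open subgroups, and the $\mathbb{Z}_p$-quotient lower bound for the converse) are sound.
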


One familiar family of $R$-analytic groups is the class of Chevalley linear algebraic groups over $R$. 
Here we have a stronger conclusion than that available in the general setting of Theorem \ref{Rstandarddiam}: 
the degree $C_2$ in the diameter bound may be taken to be \emph{independent of the dimension}. 

\begin{thm} \label{chevdiam}
Let $(R,\mathcal{M})$ be a commutative unital discrete valuation pro-$p$ domain, 
with $\mathcal{M}$ generated by $\mathcal{P}$. 
Let $G \leq GL_d (R)$ be the adjoint Chevalley group of type 
$X_l \in \lbrace A_l , B_l , C_l , D_l , E_6 , E_7 , E_8 , F_4 , G_2 \rbrace$ over $R$. 
Suppose $(X_l,p) \notin \lbrace (A_1,2),(B_l,2),(C_l,2),(D_l,2) \rbrace$. 
Let $K_n = G \cap (I_d + \mathcal{P}^n \mathbb{M}_d (R))$. 
Then there exist $C_1 (G) > 0$ and an absolute constant $C_2 > 0$ such that:
\begin{center}
$diam(G/K_n) \leq C_1 (log \lvert G / K_n \rvert)^{C_2}$. 
\end{center}
Moreover, the same bound holds for $G=SL_d (R),SO_d (R) \text{ or }Sp_d (R)$ 
provided $p \geq 3$, and for $G=SL_d (R)$ with $p=2$ provided $d \geq 3$. 
\end{thm}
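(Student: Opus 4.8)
By Theorem~\ref{Rstandarddiam} applied to the first congruence subgroup $K_1$ --- which, for the admissible pairs $(X_l,p)$, is $R$-standard with perfect Lie algebra $\mathcal{L}_{K_1} = \mathfrak{g}$, the Chevalley Lie algebra over $R$ --- one already obtains a bound of the stated shape, but with the exponent depending on $d = \dim G$. The task is to remove this dependence. The plan is to rerun the Solovay--Kitaev recursion along the congruence filtration $G = K_0 \trianglerighteq K_1 \trianglerighteq K_2 \trianglerighteq \cdots$ in a form adapted to the Chevalley commutator formula, so that each ``root direction'' at depth $m$ is produced by a \emph{single} commutator of elements at depth $\approx m/2$. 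Then the number of roots $\lvert\Phi\rvert$ and the torus rank $l$ enter the estimates only as multiplicative constants, and may be swept into $C_1(G)$ while the exponent stays absolute.

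Fix a generating set $S$ of $G/K_n$. Since $diam(G/K_1) = diam(G(\mathbb{F}_q))$ is a constant depending only on $G$, it is enough to bound $diam(K_1,S)$: every $g \in G/K_n$ splits as a lift of $gK_1$ times an element of $K_1$. A Reidemeister--Schreier argument for $K_1/K_2 \trianglelefteq G/K_2$ exhibits the finite $\mathbb{F}_q$-vector space $K_1/K_2 \cong \mathfrak{g}(\mathbb{F}_q)$ as spanned by $S$-words of length $\leq 2\, diam(G/K_1)+1$; fixing such a spanning set provides, at cost $\leq C_1(G)$, an $S$-word representing $x_\gamma(t) \bmod K_2$ for every root $\gamma$ and every $t \in \mathcal{P}R$, and similarly for the Cartan directions in $K_1/K_2$. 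These are the seeds of the recursion.

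Let $U(m)$ be the maximal $S$-word length needed to represent, modulo $K_{m+1}$, an element $x_\gamma(t)$ with $\gamma \in \Phi$ and $t \in \mathcal{P}^m R^{\times}$. For $m \geq 2$ choose roots $\alpha,\beta$ with $\alpha+\beta = \gamma$ --- possible for every $\gamma$ whenever $\Phi$ is irreducible of rank $\geq 2$; when $\Phi = A_1$ one uses instead the relation $[h_\alpha(1+a),x_\alpha(s)] = x_\alpha((a^2+2a)s)$, which is precisely where $p \geq 3$ enters for $SL_2$. The Chevalley commutator formula then gives $[x_\alpha(a),x_\beta(b)] \equiv x_\gamma(t) \pmod{K_{m+1}}$ for suitable $a \in \mathcal{P}^{\lfloor m/2\rfloor}R^{\times}$, $b \in \mathcal{P}^{\lceil m/2\rceil}R^{\times}$, since all remaining Chevalley terms $x_{i\alpha+j\beta}(\cdot)$ with $i+j\geq 2$ automatically lie in $K_{m+1}$ once $m\geq 2$; moreover the class of this commutator mod $K_{m+1}$ depends only on $x_\alpha(a) \bmod K_{\lfloor m/2\rfloor+1}$ and $x_\beta(b) \bmod K_{\lceil m/2\rceil+1}$. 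Hence $U(m) \leq 2\big(U(\lfloor m/2\rfloor)+U(\lceil m/2\rceil)\big)$, which together with $U(1)\leq C_1(G)$ gives $U(m) = O_G(m^2)$. A downward induction on $m$ (from $m=n$, where $K_n=1$, to $m=1$) then shows that every element of $K_m$ is an $S$-word of length $\leq \sum_{m'\geq m} O(\lvert\Phi\rvert)\,U(m')$: one decomposes it layer by layer using $K_{m'}/K_{m'+1} \cong \bigoplus_{\gamma\in\Phi}\mathbb{F}_q e_\gamma \oplus \mathfrak{h}(\mathbb{F}_q)$, realising the root coordinates by products of the $x_\gamma$'s and the Cartan coordinates by commutators $[x_\alpha(\cdot),x_{-\alpha}(\cdot)]$. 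In particular $diam(K_1,S) = O_G(n^3)$, and since $\log\lvert G/K_n\rvert = \Theta_G(n)$ this yields the theorem with an absolute exponent (a short computation gives $C_2 = 3$).

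The cases $G = SL_d(R), SO_d(R), Sp_d(R)$ follow by the same argument once one records the corresponding statements about $\mathfrak{g} \in \{\mathfrak{sl}_d,\mathfrak{so}_d,\mathfrak{sp}_d\}$ over $\mathbb{F}_q$: that $K_m/K_{m+1} \cong \mathfrak{g}(\mathbb{F}_q)$ with its root-space decomposition, that each root space is a single bracket $[\mathfrak{g}_\alpha,\mathfrak{g}_\beta]$, and that the coroots span $\mathfrak{h}(\mathbb{F}_q)$. These are exactly the facts that fail for the excluded pairs $(A_1,2),(B_l,2),(C_l,2),(D_l,2)$ --- and for $SL_2$ with $p=2$, though not for $SL_d$ with $p=2$ and $d\geq 3$ --- which is why those cases are omitted. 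I expect the genuine work to lie not in the recursion but in the uniform bookkeeping: checking that the base-layer cost, the number of Chevalley error terms, the Cartan decomposition, and the choice of $\alpha,\beta$ for each $\gamma$ (including the non-simply-laced systems, where two root lengths and more elaborate structure constants appear, and the rank-one cases, which require the torus-amplification step noted above) can all be bounded by constants depending only on $\Phi$ and $q$. That is what confines the rank-dependence to $C_1(G)$ and keeps $C_2$ absolute.
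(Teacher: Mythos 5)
Your architecture is genuinely different from the paper's, and the difference matters. You run a Solovay--Kitaev recursion root-by-root via the Chevalley commutator formula, producing each root direction at depth $m$ from a single commutator $[x_\alpha(a),x_\beta(b)]$ at depth $\approx m/2$. The paper deliberately avoids this route: for the classical groups it verifies hypothesis (ii) of Proposition \ref{SolovayKitaev1} by bare-hands matrix computations showing that every element of $\mathfrak{sl}_d$, $\mathfrak{so}_d$, $\mathfrak{sp}_d$ is a sum of at most $A=3$ brackets (the absolute $A$ is what makes $C_2$ absolute), and for the exceptional types it simply invokes the general Theorem \ref{Rstandarddiam} for the universal Chevalley group --- the dimension-dependent exponent there is harmless because only finitely many exceptional types exist --- and then projects to the adjoint form via Lemma \ref{easyextnlem}(i). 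Your route is essentially the Gamburd--Shahshahani/Dinai strategy, and if it closed it would give a better exponent ($C_2=3$ versus the paper's $\log 90/\log 2$); but the paper's whole point (see its remark on Dinai's hypothesis $p>\max\{\frac{l+2}{2},19\}$) is that the root-system route is exactly where the small-prime difficulties live.

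The gap is in the step you defer as ``uniform bookkeeping,'' and part of what you assert there is false. First, to realise $x_\gamma(t)$ for arbitrary $t$ of valuation $m$ you need, for \emph{every} root $\gamma$ and every admissible pair $(X_l,p)$, a decomposition $\gamma=\alpha+\beta$ whose structure constant $N_{\alpha\beta}=\pm(p_{\alpha\beta}+1)$ is a unit in $R$; the theorem includes $(F_4,2)$, $(G_2,2)$, $(G_2,3)$ and $(E_l,2)$, where structure constants $\pm2,\pm3$ occur, so this is a nontrivial case analysis you have not carried out (it can be done, but it is the hard content, not bookkeeping --- and it is precisely what the paper's matrix computations replace). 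Second, your claim that ``the coroots span $\mathfrak{h}(\mathbb{F}_q)$'' exactly outside the excluded list is wrong: for the \emph{adjoint} group of type $A_l$ with $p\mid l+1$ (e.g.\ $PSL_3(\mathbb{Z}_3)$, which the theorem covers), the Cartan matrix is singular mod $p$ and the coroots do not span the Cartan of the adjoint Lie algebra mod $\mathcal{P}$, so your layer-by-layer decomposition of $K_m/K_{m+1}$ cannot be completed by commutators $[x_\alpha(\cdot),x_{-\alpha}(\cdot)]$ alone. This is repairable --- run the recursion in the universal group, where Theorem \ref{univchev}(ii) gives coordinates $x_\alpha(t_\alpha)$ and $c_\beta(1+s_\beta)$ and the Cartan directions are coroot directions by construction, and then project to the adjoint form as the paper does --- but as written your proof works in the adjoint group and the asserted dividing line between good and excluded pairs is incorrect. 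Until the unit-structure-constant verification and the Cartan issue are actually settled for all pairs in the statement, the recursion has no base to stand on for several of the cases the theorem claims.
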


Recall the correspondence between the classical root system of type $X_l$ 
and the associated adjoint Chevalley group $G$: 
if $X_l=A_l$ then $G = PSL_d (R)$, and in particular if $X_l=A_1$ 
then $G = PSL_2 (R)$; if $X_l=B_l$ or $D_l$ then $G=PSO_d(R)$ 
(with the dichotomy between $B_l$ and $D_l$ corresponding to the parity of $d$); 
if $X_l=C_l$ then $G=PSp_d(R)$. 

In the case $R = \mathbb{Z}_p$, this result was proved by Dinai \cite{Dinai}, 
under the additional hypothesis $p > max \lbrace \frac{l+2}{2},19 \rbrace$. 

Finally we consider a class of non-linear examples. 
Recall that, for $R$ a commutative unital ring, the \emph{Nottingham group} $\mathcal{N}(R)$ 
of $R$ is the set of formal power series over $R$ with constant coefficient $0$ and 1st order coefficient 1, 
with the operation of formal composition of power series. That is, an element $f \in \mathcal{N}(R)$ has the form:
\begin{center}
$f(t) = t + \sum_{k=2} ^{\infty} \lambda_k t^k$
\end{center}
for some $\lambda_k \in R$, and for $g \in \mathcal{N}$, 
\begin{center}
$f \cdot g = g(t + \sum_{k=2} ^{\infty} \lambda_k t^k)$. 
\end{center}
We take $R = \mathbb{F}_q$ a finite field (for $q$ a power of the prime $p$) 
and write $\mathcal{N}_q$ for $\mathcal{N}(\mathbb{F}_q)$. 
$\mathcal{N}_q$ is often used as a test case for more general techniques or conjectures in pro-$p$ group theory. 
The reason for this is twofold: first, computations in $\mathcal{N}_q$ 
can be made reasonably explicitly and simply. 
Second, $\mathcal{N}_q$ has extreme properties among pro-$p$ groups: 
as was proved first by Camina \cite{Camina1} and then by Fesenko \cite{Fesenko}, 
every countably based pro-$p$ group embeds as a closed subgroup of $\mathcal{N}_q$. 
We shall be concerned with the filtration by open normal subgroups:
\begin{center}
$K_n = \lbrace t + \sum_{k=n+1} ^{\infty} \lambda_k t^k \in \mathcal{N}_q \rbrace$
\end{center}
(so that in particular $K_1 = \mathcal{N}_q$). 

\begin{thm} \label{nottinghamdiam}
Suppose $p \geq 3$. 
Then there exist $C_1 (q)>0$ and an absolute constant $C_2 > 0$ such that:
\begin{center}
$diam(\mathcal{N}_q /K_n) \leq C_1 (log \lvert G / K_n \rvert)^{C_2}$. 
\end{center}
\end{thm}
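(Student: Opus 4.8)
The plan is to establish the diameter bound for $\mathcal{N}_q / K_n$ by exploiting the commutator structure of the Nottingham group, in the same spirit as the Solovay--Kitaev-type arguments used for Theorems \ref{Rstandarddiam} and \ref{chevdiam}. The key structural fact is that $\mathcal{N}_q$ has a well-understood lower central series: with the filtration $(K_n)_n$ above, one has $(K_m, K_n) \leq K_{m+n}$ in general, and crucially, when $p \nmid (m+n)$ (or some similar coprimality condition) the commutator map surjects onto the relevant quotient $K_{m+n}/K_{m+n+1} \cong \mathbb{F}_q$. This ``fast commutator generation'' is precisely the input the general machinery needs: it lets one write elements of deep congruence subgroups as short products of commutators of elements living in shallow ones. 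The hypothesis $p \geq 3$ is exactly what is needed to ensure that the obstruction at levels divisible by $p$ can be circumvented --- one uses levels $m$ with $p \nmid m$ to ``bridge'' past the bad levels, which is possible since consecutive bad levels are $p \geq 3$ apart.

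First I would record the commutator identities in $\mathcal{N}_q$ explicitly: for $f = t + \lambda t^{m+1} + \cdots$ and $g = t + \mu t^{n+1} + \cdots$, compute the leading term of $[f,g]$, obtaining something of the form $t + c(m,n)\lambda\mu\, t^{m+n+1} + \cdots$ where $c(m,n) = n - m \pmod p$ (up to normalization). Thus whenever $m \not\equiv n \pmod p$ the commutator is nontrivial at level $m+n$, and by varying $\lambda, \mu \in \mathbb{F}_q$ one sees the commutator map $K_m/K_{m+1} \times K_n/K_{n+1} \to K_{m+n}/K_{m+n+1}$ is surjective. Second, I would set up the inductive ``telescoping'' argument: given a generating set $S$ for $\mathcal{N}_q/K_n$ and an element $x \in K_m/K_n$, I want to write $x$ as a short word in $S$. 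The strategy is to kill the leading term of $x$ at level $m$ using commutators of elements already known to be expressible by short words at shallower levels, then proceed to level $m+1$, and so on, with the word length at each stage growing by only a bounded multiplicative factor --- this geometric-series bookkeeping is what produces a polylogarithmic, rather than polynomial, bound in $\log|\mathcal{N}_q/K_n|$.

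The main technical obstacle, and where the argument genuinely needs $p \geq 3$, is handling the levels $m$ with $p \mid m$: there the self-commutator $[K_j, K_{m-j}]$ may fail to hit $K_m/K_{m+1}$ for the ``obvious'' splittings, and one must instead realize the target element as a commutator $[f,g]$ with $f \in K_j$, $g \in K_{m-j}$ chosen so that $j \not\equiv m - j \pmod p$ --- such a choice of $j$ exists precisely because $p \geq 3$ (for $p = 2$ one would have $j \equiv m-j$ forced whenever $m$ is even, and the method breaks, which is consistent with the hypothesis). I would also need to verify that the elements $f, g$ used at each stage are themselves reachable by words whose length is controlled by the inductive hypothesis; this requires arranging the induction so that ``level $m$'' elements are built from strictly shallower data, with the base case being the finitely many generators and the trivial observation that $\mathcal{N}_q/K_1$ is trivial. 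Finally, I would translate the bound: since $|\mathcal{N}_q/K_n| = q^{n-1}$, a word-length bound polynomial in $n$ for reaching $K_m/K_n$ gives $diam(\mathcal{N}_q/K_n) \leq C_1 n^{C_2} = C_1(\log_q|\mathcal{N}_q/K_n| + 1)^{C_2}$, absorbing constants appropriately; the exponent $C_2$ is absolute because the commutator-depth growth ($m + n$ from levels $m, n$) is the same regardless of $q$, exactly as in the Chevalley case.
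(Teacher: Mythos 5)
You have the right local ingredients --- the leading-term formula $[f,g] \equiv t + \lambda\mu(n-m)t^{m+n+1}$, the surjectivity of the commutator map onto $K_{m+n}/K_{m+n+1}$ when $p \nmid (n-m)$, and the correct diagnosis of why $p=2$ fails --- but the global assembly has a gap that would destroy the polylogarithmic bound. Your ``telescoping'' step kills the leading coefficient one level at a time, ``then proceed to level $m+1$, and so on, with the word length at each stage growing by only a bounded multiplicative factor.'' A bounded multiplicative factor per \emph{level}, over $n$ levels, gives word length $C^n$, which is polynomial in $\lvert \mathcal{N}_q/K_n \rvert = q^{n-1}$, not polylogarithmic. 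The geometric growth must instead be per \emph{round of doubling}: as in Lemma \ref{SKlem2}, one round upgrades coverage of $K_n/K_{Dn}$ to coverage of $K_{Dn}/K_{D^2n}$ --- a block of $\sim Dn$ new levels handled all at once --- at the cost of a single constant factor $B$ in word length, so that only $O(\log n)$ rounds are needed and the diameter is $n^{\log B/\log D}$.

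For that doubling to work with an absolute exponent, every element of a block $K_{n+m}/K_{2n+m}$ (which has $\sim n$ free coefficients) must be a product of a \emph{bounded} number of commutators with entries in fixed shallower subgroups $K_n$, $K_m$. Your fix for the levels where the coefficient $n-m+j$ vanishes mod $p$ --- choosing a fresh splitting $j + (N-j)$ of each bad target level $N$ --- uses a separate commutator for each level, hence $\sim n$ commutators per block; feeding that back into the recursion makes $B$ grow with $n$ and the bound degrades to quasi-polynomial in $n$. The paper's device is different: take $g_1 = e_{n,\lambda_1}\cdots e_{2n-1,\lambda_n}$ and $g_2 = e_{n,\mu_1}\cdots e_{2n-2,\mu_{n-1}}$ generic, and form just the two commutators $[g_1, e_{m,1}]$ and $[g_2, e_{m+1,1}]$. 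The first controls every level $n+m+i$ with $p \nmid (n-m+i)$, the second every level with $p \nmid (n-m+i-2)$, and since $p \geq 3$ these cannot both fail; so $A = 2$ in Proposition \ref{SolovayKitaev2}. Note also that the paper invokes Proposition \ref{SolovayKitaev2} rather than \ref{SolovayKitaev1} precisely so that it need only exhibit \emph{some} good splitting $n_i + m_i$ with $m_i - n_i \in \lbrace 1,2\rbrace$ (hence $p \nmid (m_i - n_i)$), rather than handle all pairs $n \leq m \leq 2n$.
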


As an application of these results, 
we make some elementary observations about mixing times of random walks in the finite groups we study. 
The direct relationship between diameter and spectral gap was recently exploited by Varj\'{u}, 
who in \cite{Varju} used a representation-theoretic argument to produce uniform weak spectral gap estimates for 
$SL_d (\mathbb{Z}/p^n \mathbb{Z})$, and deduced polylogarithmic diameter bounds. 
Here we reverse the direction of the argument, and deduce weak spectral gap estimates from uniform diameter bounds. 
As an aside, the diameter estimates for $SL_d (\mathbb{Z}/p^n \mathbb{Z})$ 
obtained by Varj\'{u} are uniform in $p$ but not in $d$, 
whereas those obtained via the Solovay-Kitaev procedure are uniform in $d$ but not in $p$. 
It may be instructive to apply Varj\'{u}'s method to other of the finite groups we treat in this paper 
to obtain diameter or spectral gap estimates which are similarly complementary to those given here. 

Let $S \subseteq \Gamma$ be a finite symmetric set. 
Let $X_1 , X_2 , \ldots$ be a sequence of independent random variables, each with law:
\begin{center}
$\frac{1}{\lvert S \rvert} \chi_S \in l^2 (\Gamma)$. 
\end{center}
For $l \in \mathbb{N}$, the \emph{simple random walk on $(\Gamma,S)$ at time $l$} 
is the random variable $Y_l = X_1 \cdots X_l$. 

For $(R,\mathcal{M})$ a discrete valuation pro-$p$ domain; 
$G$ a $d$-dimensional $R$-standard group; $x_1 , \ldots , x_d$ an $R$-basis for $\mathcal{M}^{(d)}$ 
(a set of so-called \textquotedblleft co-ordinates of the first kind\textquotedblright)
and $S \subseteq G$ a finite symmetric subset, 
we may express the simple random walk on $(G,S)$ by: 
\begin{center}
$Y_l = L_1 ^{(l)} x_1 + \cdots + L_d ^{(l)} x_d$
\end{center}
for some random variables $L_1 ^{(l)}, \ldots , L_d ^{(l)}$ supported on $R$. 

\begin{coroll} \label{rwstandard}
Suppose $\mathcal{L}_{G}$ is perfect and $S \subseteq G$ generates a dense subgroup. 
Then there exists $C(d) > 0$, such that for any $C' > 0$ 
there exists $C'' (G, \lvert S \rvert,C') > 0$ and $C'''(d,\lvert R/\mathcal{M} \rvert, C')>0$ such that, 
for any \linebreak$(\lambda_1 , \ldots , \lambda_d) \in R^{(d)}$, and for any $N \in \mathbb{N}$, we have:
\begin{center}
$\Big\lvert \mathbb{P} \big[ \lVert L_1 ^{(l)} - \lambda_1 \rVert , \ldots , \lVert L_d ^{(l)} - \lambda_d \rVert \leq c^{N+1} \big] - \frac{1}{\lvert R / \mathcal{M} \rvert^{dN}} \Big\rvert 
\leq e^{-C''' N^{C'}}$
\end{center}
whenever $l \geq C'' N^{C+C'}$. 
\end{coroll}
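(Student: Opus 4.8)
\emph{Proof sketch.} The plan is to derive the bound from the uniform diameter estimate of Theorem~\ref{Rstandarddiam}, fed into the classical comparisons between diameter, spectral gap and $\ell^{\infty}$-mixing time for reversible random walks on finite groups. First I would pass to a finite quotient. Fix $N$, set $n = N+1$ and $\Gamma = G/K_{n}$. Since $S$ generates a dense subgroup of $G$ and $K_{n}$ is open, its image $\bar{S}$ in $\Gamma$ is a symmetric generating set, and $\bar{Y}_{l} := Y_{l}K_{n}$ is the random walk on $\Gamma$ whose step law is the image of $\frac{1}{\lvert S\rvert}\chi_{S}$ --- a symmetric probability measure giving each element of $\bar{S}$ mass at least $\frac{1}{\lvert S\rvert}$. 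In coordinates of the first kind $x_{1}, \ldots, x_{d}$ the normal subgroup $K_{n}$ is identified with $(\mathcal{M}^{n})^{(d)}$ and $G/K_{n}$ with $(\mathcal{M}/\mathcal{M}^{n})^{(d)}$, so the event $\{\lVert L_{i}^{(l)} - \lambda_{i}\rVert \leq c^{N+1}\text{ for all }i\}$ is exactly $\{\bar{Y}_{l} = \lambda K_{n}\}$. As $\lvert\Gamma\rvert = \lvert R/\mathcal{M}\rvert^{dN}$, the quantity to be bounded equals $\lvert\,\mathbb{P}[\bar{Y}_{l} = \lambda K_{n}] - \lvert\Gamma\rvert^{-1}\,\rvert$, hence is at most $\max_{y\in\Gamma}\lvert P^{l}(e,y) - \lvert\Gamma\rvert^{-1}\rvert$, the $\ell^{\infty}$-distance from the uniform distribution of the $l$-step law, $P$ denoting the transition operator.

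Next I would convert the diameter bound into a spectral gap. Since $P$ is reversible with uniform stationary measure, a standard spectral expansion bounds this $\ell^{\infty}$-distance by $\lambda_{\ast}^{\,l} \leq e^{-(1-\lambda_{\ast})l}$, where $\lambda_{\ast} = \max_{i\geq 2}\lvert\lambda_{i}(P)\rvert$; so it suffices to bound $1 - \lambda_{2}(P)$ and $1 + \lambda_{\min}(P)$ from below. For the former, Theorem~\ref{Rstandarddiam} gives $D := diam(\Gamma,\bar{S}) \leq diam(\Gamma) \leq C_{1}(G)(log \lvert\Gamma\rvert)^{C_{2}(d)}$, and the canonical-path (Poincar\'{e}) inequality applied to a left-invariant family of geodesics gives $1 - \lambda_{2}(P) \geq (A\lvert S\rvert D^{2})^{-1}$ for an absolute $A > 0$. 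For the latter I would use that $\bar{S}\bar{S}$ also generates $\Gamma$ --- equivalently, the walk is aperiodic, which is automatic when $p \geq 3$, since then the finite $p$-group $\Gamma$ has no subgroup of index $2$ --- and apply Theorem~\ref{Rstandarddiam} to $\bar{S}\bar{S}$: expressing a fixed $\bar{s}_{0} \in \bar{S}$ as a word of even length $\leq 2\,diam(\Gamma,\bar{S}\bar{S})$ over $\bar{S}$ produces a closed walk of odd length $\leq 2\,diam(\Gamma,\bar{S}\bar{S}) + 1$ through $e$, hence through every point of $\Gamma$ by vertex-transitivity; the companion Poincar\'{e} inequality along closed odd-length paths then gives $1 + \lambda_{\min}(P) \geq (A\lvert S\rvert D_{0}^{2})^{-1}$ with $D_{0} \leq C_{1}'(G)N^{C_{2}(d)}$ for a suitable $C_{1}'(G)$ depending only on $G$ (it absorbs the factor $(d\,log \lvert R/\mathcal{M}\rvert)^{C_{2}(d)}$ coming from $log \lvert\Gamma\rvert = dN\,log \lvert R/\mathcal{M}\rvert$).

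Combining the two, $1 - \lambda_{\ast} \geq \gamma$ with $\gamma := (A\lvert S\rvert C_{1}'(G)^{2}N^{2C_{2}(d)})^{-1}$, so the quantity in the Corollary is at most $e^{-\gamma l}$. Taking $C = C(d) := 2C_{2}(d)$, $C''' := 1$ and $C'' = C''(G,\lvert S\rvert) := A\lvert S\rvert C_{1}'(G)^{2}$, one gets $e^{-\gamma l} \leq e^{-N^{C'}}$ whenever $l \geq C''N^{C+C'}$ (any exceptional small values of $N$ being absorbed into $C''$), which is the assertion; note that $C''$ depends only on $G$, $\lvert S\rvert$ and $C'$ while $C'''$ may be taken to depend only on $d$, $\lvert R/\mathcal{M}\rvert$ and $C'$, as required.

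The mixing machinery above is routine once the diameter bound is in hand; the step needing real care is the lower bound on $1 + \lambda_{\min}(P)$ --- equivalently, a quantitative exclusion of near-periodicity of the walk. This is exactly where the strength of Theorem~\ref{Rstandarddiam}, a diameter bound uniform over \emph{every} generating set rather than a chosen one, is used: it is applied to $\bar{S}\bar{S}$. For $p = 2$ with a periodic $\bar{S}$ the conclusion genuinely fails, so this (implicit) aperiodicity hypothesis cannot be dropped. The rest is bookkeeping --- checking that each constant depends only on the advertised data.
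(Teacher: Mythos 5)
Your argument follows essentially the same route as the paper: identify the event $\{\lVert L_i^{(l)}-\lambda_i\rVert\le c^{N+1}\ \forall i\}$ with $\{\bar Y_l=gK_{N+1}\}$ in the finite quotient $G/K_{N+1}\cong(R/\mathcal{M}^N)^{(d)}$, convert the worst-case diameter bound of Theorem~\ref{Rstandarddiam} into a spectral gap of order $(\lvert S\rvert\,diam^2)^{-1}$, and conclude exponential decay of the $\ell^\infty$-distance to uniform; your choice of constants ($C=2C_2$, $C''\asymp\lvert S\rvert C_1'^2$) matches the paper's up to the harmless normalisation $C'''=1$ versus the paper's $C'''=(d\log\lvert R/\mathcal{M}\rvert)^{C_4}$. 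The one place you genuinely diverge is in treating the bottom of the spectrum separately: the paper defines $\rho$ as the norm of $A_S$ on $l_0^2$ (so including $\lvert\lambda_{\min}\rvert$) and invokes Proposition~\ref{diamgap} wholesale, whereas you bound $1+\lambda_{\min}$ by a companion Poincar\'e inequality along odd closed paths, using the worst-case nature of the diameter bound applied to $\bar S\bar S$ and aperiodicity of the walk on a finite $p$-group for $p\ge 3$. This extra care is warranted: for a non-lazy walk the inequality $\frac{1}{1-\rho}\le\lvert S\rvert\,diam^2$ with $\rho$ the full operator norm cannot hold when the walk is periodic, and your observation that for $p=2$ one can choose a generating set lying entirely in a nontrivial coset of an index-$2$ open subgroup (so that the conclusion fails for $C'>1$) identifies a real gap in the statement's generality that the paper's proof glosses over. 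So your proof is the paper's proof done slightly more carefully, at the price of (correctly) importing an implicit restriction to $p\ge 3$ or to lazy walks.
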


In other words, for such $l$ the probability that $Y_l$ is close to any element of $\mathcal{M}^{(d)}$ 
is nearly constant, with error at most $e^{-C''' N^{C'}}$. 

For a $d$-dimensional uniform pro-$p$ group $G$, an alternative representation for elements is available: 
for any $g \in G$ and any minimal (ordered) generating set $a_1 , \ldots , a_d$ for $G$, 
there exist $\mu_1 , \ldots , \mu_d \in \mathbb{Z}_p$ such that 
$g = a_1 ^{\mu_1} \cdots a_d ^{\mu_d}$ 
(so-called \textquotedblleft co-ordinates of the second kind\textquotedblright). We therefore have:
\begin{center}
$Y_l = a_1 ^{M_1 ^{(l)}} \cdots a_d ^{M_d ^{(l)}}$
\end{center}
for some random variables $M_1 ^{(l)} , \ldots , M_d ^{(l)}$ supported on $\mathbb{Z}_p$. 

\begin{coroll} \label{rwpadic}
Let $p \geq 3$. Suppose $G$ is uniform and FAb and $S \subseteq G$ generates a dense subgroup. 
Then there exists $C(d) > 0$, such that for any $C' > 0$ there exists 
$C'' (G,\lvert S \rvert,C') > 0$ and $C'''(d,p,C')>0$ such that, 
for any $\mu_1 , \ldots , \mu_d \in \mathbb{Z}_p$, and for any $N \in \mathbb{N}$, we have:
\begin{center}
$\Big\lvert \mathbb{P} \big[ \lVert M_1 ^{(l)} - \mu_1 \rVert , \ldots , \lVert M_d ^{(l)} - \mu_d \rVert \leq p^{-N-1} \big] - \frac{1}{p^{dN}} \Big\rvert 
\leq e^{-C''' N^{C'}}$
\end{center}
whenever $l \geq C'' N^{C+C'}$. 
\end{coroll}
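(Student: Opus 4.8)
The plan is to deduce Corollary~\ref{rwpadic} from the worst-case diameter bound of Theorem~\ref{padicdiam} by a soft spectral argument, in effect running the chain ``polylogarithmic worst-case diameter $\Rightarrow$ weak spectral gap $\Rightarrow$ rapid mixing'' which reverses the direction exploited by Varj\'{u}. The first step is a routine translation into the language of finite quotients. As $G$ is uniform, the co-ordinates of the second kind furnish a homeomorphism $\mathbb{Z}_p^{(d)} \to G$, $(\mu_1 , \ldots , \mu_d) \mapsto a_1^{\mu_1} \cdots a_d^{\mu_d}$, and a straightforward computation in a uniform group shows that, for an appropriate index $m = m(N)$, the simultaneous congruence $M_i^{(l)} \equiv \mu_i$ modulo the relevant power of $p$ (for all $i$) is precisely the condition that $Y_l$ lie in a prescribed coset of the term $K_m$ of the lower central $p$-series, where $m$ is chosen so that $\lvert G / K_m \rvert = p^{dN}$. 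Write $\bar{G} = G / K_m$, let $\bar{Y}_l$ be the image of $Y_l$ in $\bar{G}$, and let $\bar{\mu}$ be the push-forward to $\bar{G}$ of the law of $X_1$ --- a symmetric probability measure on $\bar{G}$ whose support $\bar{S}$ generates $\bar{G}$, since $\langle S \rangle$ is dense in $G$ and $K_m$ is open. Then the quantity to be estimated is $\lvert \bar{\mu}^{* l} (\bar{g}) - \lvert \bar{G} \rvert^{-1} \rvert$ for a single element $\bar{g} \in \bar{G}$, so it suffices to prove the sup-norm mixing bound $\lVert \bar{\mu}^{* l} - u \rVert_{\infty} \leq e^{- C''' N^{C'}}$, where $u$ denotes the uniform distribution on $\bar{G}$.

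The substance of the argument is the extraction, from Theorem~\ref{padicdiam}, of a lower bound on the spectral gap of $\bar{\mu}$ that is polynomial in $N$ and uniform over all admissible $\bar{S}$. Let $\bar{P}$ be the convolution operator of $\bar{\mu}$ on $\ell^2 (\bar{G})$; it is self-adjoint, with eigenvalues $1 = \beta_0 > \beta_1 \geq \cdots \geq \beta_{\lvert \bar{G} \rvert - 1} \geq -1$, and, putting $\beta_* = \max_{i \geq 1} \lvert \beta_i \rvert$, one has the elementary estimate $\lVert \bar{\mu}^{* l} - u \rVert_{\infty} \leq \lVert \bar{\mu}^{* l} - u \rVert_2 \leq \beta_*^{\, l}$. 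To bound $\beta_*$ I would invoke the standard Poincar\'{e} (path-counting) inequality, which for a reversible walk on a finite group gives a spectral gap at least of order $w \cdot D^{-2}$, where $w$ is the smallest positive transition probability and $D$ the diameter of the underlying Cayley graph --- the usual factor $\lvert \bar{G} \rvert^{-1}$ being absent here by vertex-transitivity. I would apply this not to $\bar{\mu}$ but to the two-step measure $\bar{\mu} * \bar{\mu}$, whose spectral gap is $1 - \beta_*^{\, 2}$ and whose support $\overline{S S}$ still generates $\bar{G}$: the elements of $\bar{G}$ expressible as even-length words over $\bar{S}$ form a subgroup of index at most $2$ in $\langle \bar{S} \rangle = \bar{G}$, hence all of $\bar{G}$, because $\bar{G}$ is a $p$-group with $p \geq 3$. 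Thus $\overline{S S} \setminus \{ \bar{1} \}$ is a symmetric generating set of $\bar{G}$, and Theorem~\ref{padicdiam} applies: $diam (\bar{G} , \overline{S S} \setminus \{ \bar{1} \}) \leq diam (\bar{G}) \leq C_1 (log \lvert \bar{G} \rvert)^{C_2 (d)} = C_1 (d N \, log \, p)^{C_2}$. Since the smallest positive value of $\bar{\mu} * \bar{\mu}$ is at least $\lvert S \rvert^{-2}$, the Poincar\'{e} bound gives $1 - \beta_*^{\, 2} \geq c \lvert S \rvert^{-2} diam (\bar{G})^{-2}$ for an absolute $c > 0$, whence $\beta_* \leq 1 - c_1 N^{-2 C_2}$ for some $c_1 > 0$ depending only on $G$ and $\lvert S \rvert$ (the dependence on $d$ and $p$ being absorbed, as both are determined by $G$). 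Passing through the two-step walk is what controls the near-$(+1)$ and near-$(-1)$ ends of the spectrum of $\bar{P}$ at once.

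Assembling these estimates, $\lVert \bar{\mu}^{* l} - u \rVert_{\infty} \leq (1 - c_1 N^{-2 C_2})^l \leq \exp (- c_1 l \, N^{-2 C_2})$, so with $C := 2 C_2 (d)$ the right-hand side is at most $e^{- N^{C'}}$ as soon as $l \geq c_1^{-1} N^{C + C'}$; this yields the Corollary with $C'' = c_1^{-1}$ (depending on $G$ and $\lvert S \rvert$) and, say, $C''' = 1$. The one point requiring genuine care is the control of the negative part of the spectrum of $\bar{P}$: a single symmetric generating set need not make the walk aperiodic, and were $\bar{G}$ to possess a subgroup of index $2$ the walk would be bipartite and $\bar{\mu}^{* l} (\bar{g})$ would oscillate rather than converge. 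This is exactly where the hypothesis $p \geq 3$ enters (over and above its role inside Theorem~\ref{padicdiam}), and why the estimate is routed through $\bar{\mu} * \bar{\mu}$ together with the worst-case diameter rather than through $\bar{\mu}$ alone. Everything else --- in particular keeping track of which parameters each constant is permitted to depend upon --- is bookkeeping; the serious input has already been supplied by Theorem~\ref{padicdiam}.
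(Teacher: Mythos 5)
Your proposal is correct and follows essentially the same route as the paper: identify the event with a point-probability in the finite quotient $G/K_{N+1}$, convert the worst-case diameter bound of Theorem \ref{padicdiam} into a spectral gap polynomial in $N$ via a Poincar\'e/Diaconis--Saloff-Coste inequality (the paper's Proposition \ref{diamgap}), and iterate the resulting contraction to obtain the mixing estimate with $C = 2C_2$. The only divergence is technical rather than structural: you control the negative end of the spectrum by passing to the squared measure $\bar{\mu} * \bar{\mu}$ (using that a $p$-group with $p \geq 3$ has no index-$2$ subgroup), whereas the paper simply applies its quoted inequality to the norm of $A_S$ on $l_0 ^2 (G)$; your variant is, if anything, slightly more careful on exactly that point.
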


In the Nottingham group $\mathcal{N}_q$, the question of mixing times for the groups $\mathcal{N}_q / K_n$ 
was raised by Diaconis \cite{Diaconis}. We may express: 
\begin{center}
$Y_l = t + \sum_{i = 2} ^{\infty} A_i ^{(l)} t^i$
\end{center}
for some random variables $A_i ^{(l)}$ supported on $\mathbb{F}_q$. 

\begin{coroll} \label{rwnottingham}
Let $p \geq 3$. Suppose $S \subseteq \mathcal{N}_q$ generates a dense subgroup. 
Then there exists an absolute constant $C > 0$, such that for any $C' > 0$ 
there exists $C'' (q , \lvert S \rvert , C') > 0$ 
and $C''' (q , C')>0$ such that, 
for any sequence $(\alpha_i)_i$ in $\mathbb{F}_q$, 
and for any $N \in \mathbb{N}$, we have:
\begin{center}
$\Big\lvert \mathbb{P} [A_2 ^{(l)} = \alpha_2 , \ldots , A_N ^{(l)} = \alpha_N] - \frac{1}{q^{N-1}} \Big\rvert 
\leq e^{-C''' N^{C'}}$
\end{center}
whenever $l \geq C'' N^{C+C'}$. 
\end{coroll}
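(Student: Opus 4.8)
The plan is to pass to the finite quotient $\bar{G} := \mathcal{N}_q / K_N$ and deduce the claim from a polylogarithmic lower bound on the spectral gap of the associated random walk, the lower bound coming from Theorem \ref{nottinghamdiam} via a Poincar\'e inequality. First I would observe that $Y_l \bmod K_N$ is determined exactly by the coefficients $A_2^{(l)}, \ldots, A_N^{(l)}$, so the event in the statement is $\{\bar{Y}_l = \bar{g}\}$, where $\bar{Y}_l$ is the image of $Y_l$ in $\bar{G}$ and $\bar{g}$ is the coset with coordinates $(\alpha_2,\ldots,\alpha_N)$; since $\lvert\bar{G}\rvert = q^{N-1}$, the constant $q^{-(N-1)}$ is precisely the uniform probability of $\bar{g}$. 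Writing $\bar{\mu}$ for the image in $\bar{G}$ of the step law $\tfrac{1}{\lvert S\rvert}\chi_S$, the law of $\bar{Y}_l$ is $\bar{\mu}^{*l}$, and $\bar{\mu}$ is a symmetric probability measure whose support $\bar{S}$ generates $\bar{G}$ (because $\langle S\rangle$ is dense in $\mathcal{N}_q$). So it is enough to estimate $\lvert\bar{\mu}^{*l}(\bar{g}) - \lvert\bar{G}\rvert^{-1}\rvert$.

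Next I would run the standard spectral argument. As $\bar{\mu}$ is symmetric, the associated Markov operator $P$ on $\ell^2(\bar{G})$ (counting measure) is self-adjoint, with eigenvalue $1$ simple; and because $p \geq 3$ the order $\lvert\bar{G}\rvert$ is odd, so the Cayley graph of $\bar{G}$ with respect to $\bar{S}$ is non-bipartite and $P$ has no eigenvalue $-1$. Letting $\lambda_* = \max(\lambda_2, \lvert\lambda_{\min}\rvert) < 1$ be the second-largest eigenvalue modulus, the usual reversible-chain bound gives
\[
\Big\lvert \bar{\mu}^{*l}(\bar{g}) - \tfrac{1}{\lvert\bar{G}\rvert} \Big\rvert \;\leq\; \big\lVert \bar{\mu}^{*l} - U \big\rVert_{\ell^2(\bar{G})} \;\leq\; \lambda_*^{\,l} \;\leq\; e^{-l(1-\lambda_*)},
\]
with $U$ uniform. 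Everything then comes down to a bound of the form $1 - \lambda_* \geq (A\,N^{C})^{-1}$ with $C$ absolute and $A = A(q,\lvert S\rvert)$.

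For the bound on $\lambda_2$ I would apply the canonical-paths (Poincar\'e) inequality to a translation-equivariant choice of geodesics in the Cayley graph: each edge then lies on $O(diam(\bar{G},\bar{S})\cdot\lvert\bar{G}\rvert)$ of the paths, so $1 - \lambda_2 \gtrsim (\lvert S\rvert\,diam(\bar{G},\bar{S})^2)^{-1}$, and by Theorem \ref{nottinghamdiam} $diam(\bar{G},\bar{S}) \leq diam(\bar{G}) \leq C_1(\log\lvert\bar{G}\rvert)^{C_2} = C_1((N-1)\log q)^{C_2}$ — this is exactly where the worst-case diameter estimate is used. For $\lambda_{\min}$ I would use the analogous inequality built from translates of a single shortest odd closed walk $\sigma$, giving $1 + \lambda_{\min} \gtrsim (\lvert S\rvert\,\lvert\sigma\rvert^2)^{-1}$; the odd girth $\lvert\sigma\rvert$ is at most the exponent of $\bar{G}$, and this exponent is $O(N)$ because $p$-th powering in $\mathcal{N}_q$ multiplies depth in the filtration $(K_n)$ by about $p$, so that $p^{\lceil\log_p N\rceil}$-th powers already lie in $K_N$. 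Combining, $1-\lambda_* \gtrsim (\lvert S\rvert\,N^{2C_2})^{-1}$; taking $C = 2C_2$ and letting $A$ absorb the $q$- and $\lvert S\rvert$-dependence, the display yields $e^{-l(1-\lambda_*)} \leq e^{-l/(AN^{C})} \leq e^{-C'''N^{C'}}$ as soon as $l \geq AC'''N^{C+C'}$. Setting $C'' = AC'''$ (depending on $q,\lvert S\rvert,C'$) and keeping $C''' = C'''(q,C')$ free gives the statement.

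The part that is more than bookkeeping is the treatment of $\lambda_{\min}$. A naive spectral-gap bound controls only $\lambda_2$, and if one does not use $p \geq 3$ the walk can be arbitrarily close to $2$-periodic, in which case the asserted estimate would fail for half the admissible $l$. Oddness of $\lvert\bar{G}\rvert$ removes this qualitatively, but the quantitative odd-girth/exponent bound above — a genuine feature of the filtration $(K_n)$ of $\mathcal{N}_q$ rather than a formality — is what the argument actually requires, and ensuring that its contribution to the exponent $C$ remains absolute (rather than merely $q$-dependent) is the point demanding the most care.
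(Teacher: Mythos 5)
Your argument is essentially the paper's: identify the event with a point probability in the finite quotient $\mathcal{N}_q/K_N$ of order $q^{N-1}$, convert the worst-case diameter bound of Theorem \ref{nottinghamdiam} into a polylogarithmic spectral gap via the Diaconis--Saloff-Coste comparison (Proposition \ref{diamgap}), and iterate the self-adjoint adjacency operator to get the exponential estimate (Lemma \ref{mixingbound}). The only divergence is that you explicitly prove the bound on $\lvert\lambda_{\min}\rvert$ via odd closed walks and the exponent of $\mathcal{N}_q/K_N$, a point the paper absorbs into its definition of $\rho$ as the operator norm on $l_0^2$ and its citation of \cite{DiSC}; your version is more careful there and the rest is the same.
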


\subsection{Background}

The work of estimating the diameter and spectral gap for finite groups 
with respect to various generating sets has been going on for many years: 
see for instance \cite{Diaconis} for an overview of some of the work on \emph{card-shuffling} problems, 
that is, questions of mixing and diameter in the symmetric group $Sym(n)$. 

In the past decade however, there has been a flood of results which provide diameter or spectral gap 
estimates for finite simple groups of Lie type, 
and which systematically treat all (or at least most) generating sets simultaneously. 
In many ways this programme was begun by Helfgott \cite{Helf1} 
who established polylogarithmic diameter bounds for $G=PSL_2 (p)$, independent of $S$. 
These bounds were deduced from lower bounds on the growth of an arbitrary generating set 
under multiplication with itself. 
A series of papers by many authors quickly followed, 
many expressed in the language of \emph{approximate groups}, 
which generalised Helfgott's work to arbitrary finite simple groups of Lie type 
(see \cite{BGT}, \cite{PySz} for the most general statements). 

A key motivation for the development of this field was the discovery, 
first made by Bourgain and Gamburd in \cite{BoGa0}, 
that such growth results could be harnessed to construct new examples of \emph{expanders}. 
These are sequences of pairs $(G_n,S_n)_n$ 
for which the spectral gap is bounded below, independent of $n$. 
In particular, for such sequences $diam(G_n,S_n)$ is logarithmic in $\lvert G_n \rvert$. 
Bourgain and Gamburd deduced from Helfgott's result that $(PSL_2 (p))_p$ 
is an expander with respect to \emph{random} generators. 
With the proliferation of growth results and the popularization of the Bourgain-Gamburd philosophy 
came a corresponding set of papers producing new examples of expanders, 
culminating in the recent work of Breuillard, Green, Guralnick and Tao \cite{BGGT}, 
who showed that \emph{any} sequence of finite simple groups of Lie type of bounded rank 
is an expander with respect to random generators. 

For finite groups $G$ which arise as images of linear groups over pro-$p$ rings, 
the situation is very different from in the simple case: a group such as 
$SL_d (\mathbb{Z} / p^n \mathbb{Z})$ has many large normal subgroups, 
arising as the kernels of congruence maps $SL_d (\mathbb{Z} / p^n \mathbb{Z}) \rightarrow SL_d (\mathbb{Z} / p^m \mathbb{Z})$ for $m \leq n$. The presence of such subgroups is both a blessing and a curse. 
On the one hand, a clean statement about the growth of arbitrary subsets \`{a} la Helfgott becomes less accessible 
(there are in some sense too many subgroups in which a generating set may become partially trapped). 
On the other, the filtration by the congruence kernels 
opens the way to arguments by induction on the level of the filtration. 
One such is the \emph{Solovay-Kitaev Procedure}, 
originally applied to $SU(d)$ in the study of compilers in quantum computation \cite{DaNi}, 
but equally valid in the profinite world. 
This procedure works by exploiting the commutator structure of the groups concerned: 
approximating elements at lower levels in the filtration 
by commutators of elements at higher levels. 

Several papers have already exploited this idea: 
Gamburd and Shahshahani \cite{GaSh} used it to establish upper bounds on $diam(SL_2 (\mathbb{Z} / p^n \mathbb{Z}))$. 
Their analysis was extended by Dinai \cite{Dinai} to arbitrary Chevalley groups over $\mathbb{Z} / p^n \mathbb{Z}$, 
with bounds independent of the rank of the Chevalley group scheme. 
Finally Bourgain and Gamburd (\cite{BoGa1} and \cite{BoGa2}) 
combined a Solovay-Kitaev-type argument with results on random matrix products and the sum-product phenomenon in the ring $\mathbb{Z} / p^n \mathbb{Z}$ to produce many new examples of expander Cayley graphs of 
$SL_d (\mathbb{Z} / p^n \mathbb{Z})$ (though without uniformity in $d$). 
In fact, the ideas explored in \cite{GaSh} and \cite{Dinai} are relevant to a much broader class of groups. 
It is the goal of this paper to present the Solovay-Kitaev procedure for profinite groups in an appropriate level of generality and to exploit it for uniform diameter bounds in families of finite groups which have not been considered previously. 

The paper is structured as follows. In Section \ref{SKSection} 
we discuss analogues of the Solovay-Kitaev Procedure for profinite groups upon which all our results will be based. 
In Section \ref{classicalsection} we prove Theorem \ref{chevdiam} in the case of classical groups. 
This is achieved via a very concrete analysis of the Lie algebras of these groups, 
in their standard matrix representation, and does not require any understanding of the associated root systems. 
In Section \ref{AnalyticSection} we study the Lie algebras of $R$-analytic groups, 
prove Theorem \ref{Rstandarddiam} and deduce both Theorem \ref{padicdiam} and the exceptional case of Theorem \ref{chevdiam}. In Section \ref{NottinghamSection} we prove Theorem \ref{nottinghamdiam}. 
Consequences of these results for mixing times of random walks are explained in Section \ref{RWSection}. 

I am deeply grateful to my supervisor, Marc Lackenby, for suggesting that I investigate diameters of $p$-groups; 
for his many suggestions concerning this project and for his continued support and enthusiasm for my research. 
I am also grateful to EPSRC for providing financial support during the undertaking of this work. 
Several results from this paper were first presented at the Postgraduate Conference in Group Theory, 
held at the University of Birmingham in June 2014. 
I would like to thank the organizers of that conference for their hard work and for providing me with a warm welcome. 

\section{The Profinite Solovay-Kitaev Procedure} \label{SKSection}

In this section we prove general results about the diameters of finite quotients of a finitely generated profinite group $G$ under some hypotheses on the behaviour of commutators in $G$. 
The proofs of Theorems \ref{Rstandarddiam}, \ref{chevdiam} and \ref{nottinghamdiam} will thereby be reduced to a verification that commutators in the groups concerned satisfy these hypotheses. 
Our first result in this direction, which will also serve as a warm-up for the more general technical result required for some applications, is: 

\begin{propn} \label{SolovayKitaev1}
Let $G$ be a profinite group, 
$(K_n)_{n \geq 1}$ a descending sequence of open normal subgroups of $G$. 
Suppose: 
\begin{itemize}
\item[(i)] For all $m,n \geq 1$, $[K_m , K_n] \subseteq K_{m+n}$; 
\item[(ii)] There exists $n_0 \geq 1$ such that for all $m,n \geq n_0$ satisfying $n \leq m \leq 2n$, 
and all $g \in K_{n+m}$, there exist: 
\begin{center}
$g_1 , \ldots , g_A \in K_n$ , $h_1 , \ldots , h_A \in K_m$
\end{center}
such that $[g_1 , h_1] \cdots [g_A , h_A] g^{-1} \in K_{2n+m}$. 
\end{itemize}
Then $G / \bigcap_{n=1} ^{\infty} K_n$ is finitely generated and there exists $C > 0$ 
(depending only on $A$, $\lvert G / K_{2 n_0} \rvert$) such that for all $n \geq 1$, 
\begin{center}
$diam(G/K_n) \leq C n^{\frac{log(8A^2 +6A)}{log(2)}}$. 
\end{center}
\end{propn}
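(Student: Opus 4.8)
The plan is to prove by induction on $n$ that $\mathrm{diam}(G/K_n) \leq C n^{\alpha}$ for a suitable constant $C$ and exponent $\alpha = \frac{\log(8A^2+6A)}{\log 2}$, doing the real work at the level of the quotients $G/K_n$ and lifting generating sets through the filtration. First I would record the easy consequences of hypothesis (i): the subgroups $(K_n)$ are normal, the successive quotients $K_n/K_{n+1}$ are central in $G/K_{n+1}$ (in fact $[G,K_n] = [K_1,K_n] \subseteq K_{n+1}$), and so each $K_n/K_{n+1}$ is a finite abelian $p$-group of bounded order; writing $q = \lvert G/K_{2n_0}\rvert$ gives a uniform bound $\lvert K_n/K_{n+1}\rvert \leq q$ for $n \geq n_0$ (and we absorb the finitely many bottom layers into $C$). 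Finite generation of $G/\bigcap K_n$ follows since $G/K_{2n_0}$ is finite and each further layer is generated by boundedly many elements. The induction base case is $n \leq 2n_0$, which is handled by taking $C$ at least $\lvert G/K_{2n_0}\rvert$.

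The inductive step is the heart of the argument and runs as follows. Fix a generating set $S$ for $G/K_m$ where $m$ is roughly twice some $n$ with $n_0 \leq n \leq m \leq 2n$ — more precisely I would set up the induction so that knowing a diameter bound at level $n$ lets me deduce one at level $n+m$ (with $n \leq m \leq 2n$), doubling the level at each stage, so after $\log_2(n/n_0)$ stages we reach level $n$. Given $g \in K_{n+m}/K_{2n+m}$, hypothesis (ii) expresses (a representative of) $g$ as a product of $A$ commutators $[g_i,h_i]$ with $g_i \in K_n$, $h_i \in K_m$, modulo $K_{2n+m}$. Now I would bound the word length of each $g_i$ in terms of $S$: by the inductive hypothesis every element of $K_n/K_{n+1}$, hence by a telescoping/filtration argument every element of $K_n/K_{2n+m}$, is a product of at most $O((2n+m)^{\alpha})$ generators from $S$ — here one must be slightly careful and iterate the approximation through the layers $K_n \supseteq K_{n+1} \supseteq \cdots$, but since $[K_n,K_{n+1}] \subseteq K_{2n+1}$ etc., the "error terms" only improve the level, so the total cost is a geometric-type sum dominated by its first term. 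Similarly each $h_i$ has word length $O(m^{\alpha}) = O((2n+m)^\alpha)$. Therefore $g$ — being a product of $A$ commutators, each a product of $4$ such words — has word length at most $C'A \cdot (2n+m)^{\alpha}$ for a constant $C'$ absorbed appropriately. Combining: $\mathrm{diam}(G/K_{2n+m}) \leq \mathrm{diam}(G/K_{n+m}) + (\text{cost of killing the } K_{n+m}/K_{2n+m} \text{ layer})$, and unwinding the recursion $f(2n+m) \leq f(n+m) + C'' A (2n+m)^\alpha$ together with the doubling of levels yields the bound $C n^\alpha$; the exponent $\alpha = \log(8A^2+6A)/\log 2$ is exactly what makes the geometric sum of the contributions $\sum_k (8A^2+6A)^{-k}(\cdots)$ converge, after accounting for the fact that at each doubling stage there are up to $A$ commutators each built from words of the previous stage, and each commutator of words of length $\ell$ has length $\leq 2(\ell_1 + \ell_2)$, giving the branching factor $8A^2 + 6A$ once the bookkeeping of the two different levels $n$ and $m$ (and the $+1$-type shifts in (ii)) is done honestly.

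The main obstacle I anticipate is precisely this bookkeeping: hypothesis (ii) only lets me approximate an element of $K_{n+m}$ modulo $K_{2n+m}$ by commutators, not to kill it outright, so I must iterate, and at each iteration the "target level" jumps from $n+m$ to $2n+m$, which is *not* simply doubling unless I track the relation between the two indices carefully. The clean way is to restrict attention along a subsequence of levels — say levels of the form $N_j$ defined recursively by $N_{j+1} = N_j + \lfloor N_j/2 \rfloor$ or similar, staying within the $n \leq m \leq 2n$ window — prove the bound along this subsequence, and then interpolate to all $n$ using the crude estimate $\mathrm{diam}(G/K_{n}) \leq \mathrm{diam}(G/K_{n+1}) + \lvert K_n/K_{n+1}\rvert \leq \mathrm{diam}(G/K_{n+1}) + q$, which costs only $O(n)$ and is swallowed by $n^\alpha$ since $\alpha \geq \log 14 / \log 2 > 1$. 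The second subtlety is making the constant $C$ genuinely depend only on $A$ and $\lvert G/K_{2n_0}\rvert$ and not secretly on $n_0$ or $G$; this works because $n_0$ only affects the base of the induction, whose contribution is $\lvert G/K_{2n_0}\rvert$ times a constant, and the per-layer generator count is controlled by $\lvert G/K_{2n_0}\rvert$ via the bound on $\lvert K_n/K_{n+1}\rvert$.
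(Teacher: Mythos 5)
Your overall strategy (iterate the commutator approximation, doubling the level at each stage, and pay a fixed branching factor in word length per doubling) is the right one, but the quantitative heart of the statement --- the constant $8A^2+6A$ --- is exactly the piece you leave to ``honest bookkeeping,'' and the bookkeeping you sketch would not produce it. A single doubling stage consisting of ``$A$ commutators each built from words of the previous stage'' would give a branching factor of order $4A$, and moreover it cannot be run as stated: hypothesis (ii) with $n\leq m\leq 2n$ only reaches from $K_{n+m}$ down to $K_{2n+m}$, so one application never bridges $K_{2n}$ to $K_{4n}$. The actual argument needs \emph{two} successive applications of (ii) inside one doubling step: first with $m=n$, which (using the stability of commutators under approximation, i.e.\ condition (i') of Lemma \ref{SKlem1}) shows that if $X$ covers $K_n$ modulo $K_{2n}$ then $X^{4A}$ covers $K_{2n}$ modulo $K_{3n}$; then with $m=2n$, where the $h_i\in K_{2n}$ must themselves be approximated by the length-$4A$ words just produced, so each commutator costs $2(1+4A)$ letters and the layer $K_{3n}/K_{4n}$ is covered by $X^{8A^2+2A}$. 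Adding the two stages gives the amplification $K_{2n}/K_{4n}\subseteq K_{4n}X^{8A^2+6A}/K_{4n}$, and it is this fixed covering statement for a fixed set $X$ (not the diameter itself) that one iterates, converting to a diameter bound only at the end (this is the content of Lemma \ref{SKlem2}, with $B=8A^2+6A$, $D=2$, and $l_0\leq\lvert G/K_{2n_0}\rvert$ giving the stated dependence of $C$). Your recursion $f(2n+m)\leq f(n+m)+C''A(2n+m)^{\alpha}$ conflates these two quantities and, in the crucial step ``each $h_i$ has word length $O(m^{\alpha})$ by the inductive hypothesis,'' quietly assumes a covering at level $m\approx 2n$ before it has been established from the level-$n$ covering --- which is precisely the circularity the two-stage argument is there to break. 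As written, the claimed exponent $\log(8A^2+6A)/\log 2$ is asserted rather than proved.

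Separately, the bound $\lvert K_n/K_{n+1}\rvert\leq\lvert G/K_{2n_0}\rvert$ for all $n\geq n_0$ is not justified and does not follow from the hypotheses: nothing in (i) or (ii) forces the layer sizes to be uniformly bounded (hypothesis (ii) at best limits their growth rate, e.g.\ $\lvert K_{2n}/K_{3n}\rvert\leq\lvert K_n/K_{2n}\rvert^{2A}$; the Remark after the proposition notes constancy only in the examples, not as a hypothesis). You use this claim twice: for finite generation of $G/\bigcap_n K_n$, and for interpolating the bound from your subsequence of levels to all $n$ at a cost of $q$ per layer. Both uses should be replaced: finite generation follows because any finite set whose image generates $G/K_{2n_0}$ covers every $G/K_n$ by the iterated amplification, and interpolation is immediate from monotonicity of the worst-case diameter under quotients (Lemma \ref{easyextnlem}(i): $diam(G/K_n)\leq diam(G/K_{n'})$ for $n\leq n'$), with no per-layer cost at all. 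Two smaller slips in the same vein: centrality of $K_n/K_{n+1}$ in $G/K_{n+1}$ uses $[G,K_n]=[K_1,K_n]$, which requires $K_1=G$ (not assumed), and the layers need not be $p$-groups since $G$ is only assumed profinite; neither is needed for the proof.
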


\begin{rmrk}
In all the examples we consider below, 
we will have in addition that the sequence $(\lvert K_i / K_{i+1} \rvert)_i$ is constant, 
so that a bound for $diam(G/K_n)$, which is polynomial in $n$, 
is polylogarithmic in $\lvert G/K_n \rvert$. 
\end{rmrk}

If we imagine the subgroups $K_n$ to be balls in $G$ around the identity of radius $c^n$, 
for some $c \in (0,1)$, then hypothesis (i) of Proposition \ref{SolovayKitaev1} 
says roughly that a commutator of two elements is of size quadratic in the sizes of those two elements, 
whereas hypothesis (ii) says that every element may be approximated by a product of (a bounded number of) 
commutators of larger elements. Indeed, in a real Lie group such as $SU_d$, 
replacing the $K_n$ with Euclidean balls around $Id$ and interpreting \textquotedblleft size\textquotedblright as \textquotedblleft Euclidean distance\textquotedblright, 
we recover the properties on which the proof of the original Solovay-Kitaev Theorem is based. 
In this sense then, it is legitimate to describe Proposition \ref{SolovayKitaev1} 
as a \textquotedblleft profinite Solovay-Kitaev Theorem\textquotedblright. 

In fact, rather than hypothesis (i) itself the proof uses a reformulation (i'), 
as explained in the following Lemma. Under the interpretation just outlined, 
hypothesis (i') says that, given a pair of elements $g , h$ 
and a pair of \textquotedblleft approximations\textquotedblright $g' , h'$ up to some error, 
$[g',h']$ approximates $[g,h]$ up to an error which is quadratic in the sizes of $g$ and $h$, 
and the errors in the original approximations $g'$ and $h'$. 

\begin{lem} \label{SKlem1}
Let $G$ be a profinite group, 
$(K_n)_{n \geq 1}$ a descending sequence of open normal subgroups. 
The following conditions are equivalent:
\begin{itemize}
\item[(i)] For all $m,n \geq 1$, $[K_m , K_n] \subseteq K_{m+n}$. 
\item[(i')] For all $m,m',n,n' \geq 1$, with $m \leq m'$, $n \leq n'$, 
and for all $g,g' \in K_n$; $h,h' \in K_m$ with $g^{-1}g' \in K_{n'}$; $h^{-1}h' \in K_{m'}$, 
\begin{center}
$[g,h]^{-1}[g',h'] \in K_{min(m+n',m'+n)}$. 
\end{center}
\end{itemize}
\end{lem}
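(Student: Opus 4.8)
The plan is to prove the two implications separately, with the bulk of the work in $(i) \Rightarrow (i')$. For $(i') \Rightarrow (i)$, I would simply specialise: take $m = m'$, $n = n'$, and $g = h = 1$ (so automatically $g^{-1}g' = g' \in K_n = K_{n'}$ and $h^{-1}h' = h' \in K_m = K_{m'}$). Then $(i')$ gives $[1,1]^{-1}[g',h'] = [g',h'] \in K_{\min(m+n, m+n)} = K_{m+n}$ for all $g' \in K_n$, $h' \in K_m$, which is exactly $(i)$.

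For the forward direction $(i) \Rightarrow (i')$, the idea is a standard ``telescoping'' of the commutator using commutator identities, together with the observation that $(i)$ is equivalent to its own apparently weaker/stronger variants once combined with normality. First I would record the consequences of $(i)$ I intend to use: since each $K_j$ is normal in $G$, conjugation by any element of $G$ preserves $K_j$; and $(i)$ gives $[K_a, K_b] \subseteq K_{a+b}$ for all $a,b$. The strategy is to write $[g',h'] = [g \cdot (g^{-1}g'), h \cdot (h^{-1}h')]$ and expand via the identities $[xy, z] = [x,z]^y [y,z]$ and $[x, yz] = [x,z][x,y]^z$. Writing $u = g^{-1}g' \in K_{n'}$ and $v = h^{-1}h' \in K_{m'}$, expansion produces $[g',h']$ as a product of conjugates of the four terms $[g,h]$, $[g,v]$, $[u,h]$, $[u,v]$. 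The term $[g,v]$ lies in $[K_n, K_{m'}] \subseteq K_{n+m'}$; the term $[u,h]$ lies in $[K_{n'}, K_m] \subseteq K_{n'+m}$; and $[u,v] \in [K_{n'}, K_{m'}] \subseteq K_{n'+m'}$, which is contained in both. All three, and their $G$-conjugates, therefore lie in $K_{\min(m+n', m'+n)}$. Rearranging so that $[g,h]$ appears in the leftmost position and absorbing the remaining conjugated ``small'' terms, one gets $[g,h]^{-1}[g',h'] \in K_{\min(m+n', m'+n)}$ as required. One must be slightly careful that conjugating one of the small terms by something like $[g,h]$ (which need not lie in a deep $K_j$) does not cause trouble — but it does not, precisely because each $K_j$ is normal in the whole of $G$, so conjugation by an arbitrary group element keeps the small term inside the same $K_j$.

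The main obstacle, such as it is, is purely bookkeeping: carrying out the commutator expansion cleanly and verifying that after all the conjugations and reorderings exactly one copy of $[g,h]$ survives (to be cancelled by the $[g,h]^{-1}$ on the left) while every other factor is a $G$-conjugate of an element of $K_{n+m'}$ or $K_{n'+m}$. There is no deep idea here beyond the commutator calculus and the normality of the $K_j$; the hypotheses $m \le m'$, $n \le n'$ are used only to identify which of $n+m'$ and $n'+m$ is the relevant lower bound (namely their minimum, since $n+m'$ and $n'+m$ are the two ``mixed'' exponents and $n'+m' \ge \min(n+m', n'+m)$ automatically). I would present the expansion for a single pair of substitutions and let the reader fill in the associativity manipulations.
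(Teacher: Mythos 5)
Your overall strategy is the same as the paper's: both directions are handled by commutator identities together with hypothesis (i) and normality of the $K_j$. Your converse is correct and in fact cleaner than the paper's (which, instead of taking $g=h=1$, takes $g'=h'=h$ with $n'=n$ and $m'>m$): each generator $[g',h']$ of $[K_m,K_n]$ lands in $K_{m+n}$, and $K_{m+n}$ is a subgroup, so the containment follows.

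In the forward direction there is one concrete point where your bookkeeping claim fails as stated. Expanding with $[xy,z]=[x,z]^y[y,z]$ and $[x,yz]=[x,z][x,y]^z$, and writing $u=g^{-1}g'$, $v=h^{-1}h'$, one gets (in one order of expansion) $[g',h']=[g,v]^u\,[g,h]^{vu}\,[u,v]\,[u,h]^v$. The copy of $[g,h]$ that survives is the conjugate $[g,h]^{vu}$, not $[g,h]$ itself, so it does not cancel against the $[g,h]^{-1}$ on the left; normality of the $K_j$ does not repair this, because $[g,h]$ need not lie in $K_{\min(m+n',m'+n)}$, and your remark about conjugation only covers the small factors. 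The fix is one further application of (i): $[g,h]^{-1}[g,h]^{vu}=[[g,h],vu]\in[K_{n+m},K_{\min(n',m')}]\subseteq K_{n+m+\min(n',m')}\subseteq K_{\min(m+n',m'+n)}$, after which your rearrangement closes as intended. This correction term is exactly what the paper's explicit identity accounts for via its second-order factors $[[g,h],\tilde h]$ and $[[g,h\tilde h],\tilde g]$; with that one line added, your proof is complete and essentially coincides with the paper's.
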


\begin{proof}
Assuming (i), write $\tilde{g}=g^{-1}g',\tilde{h}=h^{-1}h'$. Then we may express $[g',h']$ as:
\begin{center}
$[g',h']=[g,\tilde{h}][g,h][[g,h],\tilde{h}][[g,h \tilde{h}],\tilde{g}][\tilde{g},h \tilde{h}]$
\end{center}
by standard commutator identities. Now:
\begin{center}
$[g,\tilde{h}] \in K_{n+m'}$; $[[g,h],\tilde{h}] \in K_{n+m+m'}$; 
$[[g,h \tilde{h}],\tilde{g}] \in K_{n+n'+m}$; $[\tilde{g},h \tilde{h}] \in K_{n'+m}$
\end{center}
by (i), so that $[g,h] \equiv [g',h'] \text{ mod } K_{min(m+n',m'+n)}$. 
\\ \\
Conversely, assuming (i'), let $g \in K_n$, $h \in K_m$. 
We may assume $n \leq m$.\linebreak Then $g^{-1} h \in K_n$. 
Taking $n'=n$, $m' > m$ in (i'), we have \linebreak $min(m+n',m'+n)=n+m$, 
so we may set $g' = h' = h$ to obtain: 
\begin{center}
$e = [h,h] \equiv [g,h] \text{ mod } K_{n+m}$. 
\end{center}
In other words, $[g,h] \in  K_{n+m}$, as required. 
\end{proof}

The diameter bound will come from the following Lemma, 
the conditions of which we shall verify in the setting of Proposition \ref{SolovayKitaev1}. 

\begin{lem} \label{SKlem2}
Let $G$ be a profinite group, 
$(K_n)_{n \geq 1}$ a descending sequence of open normal subgroups. 
Suppose there exist $n_0 , B , D \in \mathbb{N}$, with $D \geq 2$, such that, 
for every $n \geq n_0$ and every $X \subseteq G$, 
\begin{equation} \label{telescopingquotients}
K_n / K_{Dn} \subseteq K_{Dn} X / K_{Dn} \Rightarrow K_{Dn} / K_{D^2 n} \subseteq K_{D^2 n} X^B / K_{D^2 n}. 
\end{equation}
Then $G / \bigcap_{n=1} ^{\infty} K_n$ is finitely generated and there exists $C > 0$ 
(depending only on $B , \lvert G / K_{Dn_0} \rvert$) such that for any $n \in \mathbb{N}$, 
\begin{center}
$diam(G/K_n) \leq C n^{\frac{log(B)}{log(D)}}$. 
\end{center}
\end{lem}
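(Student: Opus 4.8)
The plan is to iterate the implication (\ref{telescopingquotients}) and chase quotients up the filtration. Fix $n \geq n_0$ and suppose $X \subseteq G$ is a symmetric set containing $1$ with $K_n = K_{Dn} X$ modulo $K_{Dn}$; thinking of $X$ as (the image of) a word-ball, the hypothesis converts a covering of $K_n/K_{Dn}$ by $X$ into a covering of $K_{Dn}/K_{D^2 n}$ by $X^B$. Applying (\ref{telescopingquotients}) again with $Dn$ in place of $n$ turns this into a covering of $K_{D^2 n}/K_{D^3 n}$ by $X^{B^2}$, and so on: after $k$ steps we cover $K_{D^k n}/K_{D^{k+1} n}$ by $X^{B^k}$. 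First, though, I must produce the \emph{base} covering of $K_n / K_{Dn}$: since $G/K_{Dn_0}$ is finite, the image of $K_{n_0}$ in it lies in a word-ball of radius $\lvert G/K_{Dn_0}\rvert$ over any generating set, and for $n_0 \leq n$ one gets $K_n / K_{Dn}$ covered by a ball of radius at most $\lvert G/K_{Dn_0}\rvert$ in the images of fixed coset representatives. This also shows, by the same finiteness, that $G/\bigcap_n K_n$ is topologically finitely generated (the finitely many representatives of $G/K_{Dn_0}$ together with lifts of the $K_{D^j n_0}/K_{D^{j+1}n_0}$ generate).

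Second, I assemble the telescoping estimate. Given a target $n$, choose $k$ minimal with $D^k n_0 \geq n$, so $k = O(\log n)$ and $D^k n_0 = O(n)$. Writing $N = D^k n_0$, the group $G/K_N$ is generated as follows: a word of length $\lvert G/K_{Dn_0}\rvert$ handles $G/K_{n_0}$, and for each $j = 0, \ldots, k-1$ the factor $K_{D^j n_0}/K_{D^{j+1} n_0}$ is covered by $X_j^{B^j}$ where $X_j$ is a fixed finite set of coset representatives of bounded size (again bounded by $\lvert G/K_{Dn_0}\rvert$). Multiplying these coverings together (each group in the filtration being normal, so the product of the covering balls is a covering ball for the whole), $\mathrm{diam}(G/K_N)$ is at most a constant (depending on $\lvert G/K_{Dn_0}\rvert$) times $\sum_{j=0}^{k-1} B^j = O(B^k)$. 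Since $D^k = O(N/n_0)$ with $N = O(n)$, we get $B^k = (D^k)^{\log B/\log D} = O(n^{\log B/\log D})$, and finally $\mathrm{diam}(G/K_n) \leq \mathrm{diam}(G/K_N) \leq C n^{\log B/\log D}$ as claimed, absorbing all dependence on $B$ and $\lvert G/K_{Dn_0}\rvert$ into $C$.

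The main obstacle is bookkeeping rather than conceptual: one has to be careful that applying (\ref{telescopingquotients}) really does chain, i.e. that the hypothesis applies at each level $D^j n_0 \geq n_0$ (it does, since these all exceed $n_0$), and that passing from ``$X$ covers a quotient $K_a/K_b$'' to ``a word-ball in $G/K_b$ covers $K_a/K_b$'' is legitimate — this uses normality of the $K_i$ so that products of lifts stay controlled, and the fact that enlarging $X$ to include fixed coset representatives of $G/K_{Dn_0}$ costs only a bounded factor. A secondary point requiring care is the transition from the covering statements, which are about cosets, to an actual diameter bound for the whole group $G/K_n$: here one observes that if $K_{a}/K_{b} \subseteq Y_a$ for each consecutive pair in a chain $G = K_1 \supseteq \cdots \supseteq K_N$, then $G/K_N$ is covered by the product $Y_1 Y_2 \cdots$, whose word-length is the sum of the individual radii; this is where the geometric sum $\sum B^j$ enters and why the exponent $\log B/\log D$ — and not something larger — comes out.
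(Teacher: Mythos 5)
Your proposal is correct and follows essentially the same route as the paper's proof: cover $K_{n_0}/K_{Dn_0}$ by a word-ball of radius at most $\lvert G/K_{Dn_0}\rvert$, iterate (\ref{telescopingquotients}) to cover $K_{D^j n_0}/K_{D^{j+1}n_0}$ by $X^{B^j}$, sum the geometric series to bound $diam(G/K_{D^k n_0},S)$ by $O(B^k)$, and choose $k$ with $D^k n_0$ comparable to $n$ to extract the exponent $\frac{\log B}{\log D}$, using monotonicity of diameter under quotients. The only phrasing to tighten is the finite-generation aside: no extra lifts of the factors $K_{D^j n_0}/K_{D^{j+1}n_0}$ are needed (taken literally that would be infinitely many elements) -- the covering argument already shows that any finite set mapping onto a generating set of $G/K_{Dn_0}$ generates $G$ modulo every $K_n$, hence topologically generates $G/\bigcap_n K_n$.
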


\begin{proof}
Let $S \subseteq G$, and suppose the restriction of the natural epimorphism $\pi_{Dn_0} : G \twoheadrightarrow G/K_{Dn_0}$ to $\langle S \rangle$ is surjective. Then for some $l_0 \in \mathbb{N}$ (independent of $S$), 
\begin{center}
$K_{Dn_0} B_S (l_0 ) / K_{Dn_0} = G / K_{Dn_0}$
\end{center}
(we may always take $l_0 \leq \lvert G / K_{Dn_0} \rvert$). 
In particular we have\linebreak $K_{n_0} / K_{Dn_0} \subseteq K_{Dn_0} B_S (l_0 ) / K_{Dn_0}$. 
By an easy induction involving (\ref{telescopingquotients}), we have for any $i \in \mathbb{N}$, 
\begin{center}
$K_{D^i n_0} / K_{D^{i+1} n_0} \subseteq K_{D^{i+1} n_0} B_S (B^i l_0 ) / K_{D^{i+1} n_0}$. 
\end{center}
It follows that, for any $n \leq D^i n_0$, 
\begin{center}
$diam(G/K_n , S) \ll_{B,l_0} B^i$. 
\end{center}
Hence for arbitrary $n$, choosing $i$ such that $D^{i-1} n_0 \leq n \leq D^i n_0$, 
\begin{center}
$diam(G/K_n , S) \ll_{B,l_0} B^{\frac{log (n)}{log(D)}} = n^{\frac{log(B)}{log(D)}}$. 
\end{center}
Now let $\overline{S} \subseteq G / K_n$ and suppose $\langle \overline{S} \rangle = G / K_n$. 
If $n \leq Dn_0$,\linebreak then $diam(G/K_n , \overline{S}) \leq l_0$. 
Otherwise, the image of $\overline{S}$ in $G / K_{Dn_0}$ is a generating set, and the preceding argument applies. 
\\ \\
In particular, let $\tilde{S} \subseteq G$ be finite with image in $G / K_{Dn_0}$ a generating set. 
Then for every $n$, $\tilde{S}$ generates $G$ modulo $K_n$, 
so $\tilde{S}$ maps to a topological generating set in $G / \bigcap_{n=1} ^{\infty} K_n$. 
\end{proof}

\begin{proof}[Proof of Proposition \ref{SolovayKitaev1}]
Let $n \geq n_0$. Suppose $X \subseteq G$ is such that: 
\begin{equation} \label{approx1}
K_n / K_{2n} \subseteq K_{2n} X / K_{2n}. 
\end{equation}
Let $g \in K_{2n}$. By hypothesis (ii) there exist $g_1 , \ldots , g_A, h_1 , \ldots , h_A \in K_n$ such that:
\begin{center}
$g \equiv [g_1 , h_1] \cdots [g_A , h_A] \text{ mod } K_{3n}$. 
\end{center}
By (\ref{approx1}) there exist $g_1 ', \ldots , g_A ', h_1 ', \ldots , h_A ' \in X$ with 
$g_i \equiv g_i ' , h_i \equiv h_i ' \text{ mod } K_{2n}$ for $i=1 , \ldots , A$. 
By hypothesis (i') from Lemma \ref{SKlem1}, $[g_i , h_i] \equiv [g_i ', h_i '] \text{ mod } K_{3n}$. 
Hence $g \equiv [g_1' , h_1'] \cdots [g_A' , h_A'] \text{ mod } K_{3n}$, so that:
\begin{equation} \label{approx2}
K_{2n} / K_{3n} \subseteq K_{3n} X^{4A} / K_{3n}. 
\end{equation}
Likewise, let $g \in K_{3n}$. There exist $g_1 , \ldots , g_A \in K_{n}$, $h_1 , \ldots , h_A \in K_{2n}$ such that:
\begin{center}
$g \equiv [g_1 , h_1] \cdots [g_A , h_A] \text{ mod } K_{4n}$. 
\end{center}
By (\ref{approx1}) and (\ref{approx2}) there exist 
$g_1', \ldots , g_A' \in X$ and $h_1', \ldots , h_A' \in X^{4A}$ such that 
$g_i \equiv g_i' \text{ mod } K_{2n}$ and $h_i \equiv h_i' \text{ mod } K_{3n}$, 
so that $[g_i,h_i] \equiv [g_i',h_i'] \text{ mod } K_{4n}$ for $i = 1 , \dots , A$ and: 
\begin{center}
$g \equiv [g_1' , h_1'] \cdots [g_A' , h_A'] \text{ mod } K_{4n}$. 
\end{center}
Hence: 
\begin{equation} \label{approx3}
K_{3n} / K_{4n} \subseteq K_{4n} X^{8A^2 + 2A} / K_{4n}. 
\end{equation}
Combining (\ref{approx1}), (\ref{approx2}) and (\ref{approx3}), we obtain 
$K_{2n} / K_{4n} \subseteq K_{4n} X^{8A^2 +6A} / K_{4n}$. 
The required result now follows from Lemma \ref{SKlem2}, applied with $B = 8A^2 +6A$, $D = 2$. 
\end{proof}

Proposition \ref{SolovayKitaev1} will suffice to prove Theorem \ref{chevdiam} 
in the case of classical groups over pro-$p$ rings. 
For general analytic pro-$p$ groups and for the Nottingham group, however, 
generating elements as products of commutators is more difficult. 
For example, $[K_n,K_m]$ may not be the whole of $K_{n+m}$ 
(as will always be the case in the setting of Proposition \ref{SolovayKitaev1}) 
but some deeper subgroup $K_{n+m+k}$ (with $k \geq 1$ bounded independent of $m,n$). 
To circumvent these and other complexities of the general case, 
we prove a stronger version of Proposition \ref{SolovayKitaev1}, 
in which hypothesis (ii) has been weakened: 

\begin{propn} \label{SolovayKitaev2}
Let $G$ be a profinite group, 
$(K_n)_{n \geq 1}$ a descending sequence of open normal subgroups of $G$. 
Suppose: 
\begin{itemize}
\item[(i)] For all $m,n \geq 1$, $[K_m , K_n] \subseteq K_{m+n}$; 
\item[(ii)] There exists $\epsilon \in (0,1)$; $A,M_1 ,M_2 \in \mathbb{N}$ such that for all $n \geq M_1$, 
there exist $n_i , m_i \in \mathbb{N}$ (for $i=1,2,3$) with:
\begin{center}
$\frac{n}{3} (2+i+ \epsilon) \leq n_i \leq m_i \leq \frac{2n}{3} (2+i)$; $n_i + m_i = (2+i)n-M_2$
\end{center}
and for all $g \in K_{(2+i)n}$, there exist: 
\begin{center}
$g_1 , \ldots , g_A \in K_{n_i}$ , $h_1 , \ldots , h_A \in K_{m_i}$
\end{center}
such that $[g_1 , h_1] \cdots [g_A , h_A] g^{-1} \in K_{(2+i)n + n_i - M_2}=K_{2n_i + m_i}$. 
\end{itemize}
Then $G / \bigcap_{n=1} ^{\infty} K_n$ is finitely generated and there exists $C > 0$ 
(depending on $A$, $\lvert G / K_{3n_0} \rvert$, 
where $n_0 = max \lbrace 2 M_1 ,\lceil \frac{3M_2}{\epsilon} \rceil \rbrace$) such that: 
\begin{center}
$diam(G/K_n) \leq C n^{\frac{6 log(4A+1)}{log(3)}}$. 
\end{center}
\end{propn}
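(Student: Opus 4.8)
The plan is to deduce Proposition \ref{SolovayKitaev2} from Lemma \ref{SKlem2}, in exactly the way Proposition \ref{SolovayKitaev1} was, but now with $D = 3$ in place of $D = 2$ and with the larger value $B = (4A+1)^6$. Concretely, setting $n_0 = \max\lbrace 2M_1, \lceil 3M_2/\epsilon \rceil \rbrace$, I would show that for every $n \geq n_0$ and every $X \subseteq G$,
\[
K_n / K_{3n} \subseteq K_{3n} X / K_{3n} \ \Longrightarrow \ K_{3n} / K_{9n} \subseteq K_{9n} X^{(4A+1)^6} / K_{9n},
\]
which is the hypothesis (\ref{telescopingquotients}) of Lemma \ref{SKlem2} with $D = 3$, $B = (4A+1)^6$; the Lemma then gives both the finite generation of $G/\bigcap_n K_n$ and the bound $diam(G/K_n) \leq C n^{\log B/\log D} = C n^{6\log(4A+1)/\log 3}$, with $C$ depending only on $A$ and $\lvert G/K_{3n_0}\rvert$, as required.

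The heart of the matter is an induction that ``peels off'' the six layers $K_{3n}/K_{4n}, K_{4n}/K_{5n}, \ldots, K_{8n}/K_{9n}$ one at a time, maintaining at stage $\ell$ the statement $K_n/K_{\ell n} \subseteq K_{\ell n} X^{B_\ell}/K_{\ell n}$ with $B_\ell \leq (4A+1)^{\ell-3}$, starting from the hypothesis $B_3 = 1$. To pass from $\ell$ to $\ell+1$ I would choose an instance of hypothesis (ii): a ``scale'' $n' \in \lbrace n, 2n\rbrace$ and an index $i \in \lbrace 1,2,3\rbrace$ with target level $(2+i)n' \leq \ell n$ whose output level satisfies $2n_i + m_i = (2+i)n' + n_i - M_2 \geq (\ell+1)n$. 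The available targets are $3n, 4n, 5n$ (scale $n$, $i = 1,2,3$) and $6n, 8n, 10n$ (scale $2n$, $i = 1,2,3$): layers $3n/4n$, $4n/5n$, $5n/6n$ come from the three scale-$n$ instances; layers $6n/7n$ and $7n/8n$ both come from the single scale-$2n$, $i=1$ instance (whose output reaches $K_{8n}$); and layer $8n/9n$ from the scale-$2n$, $i=2$ instance. Checking that the output levels clear the next layer is where $n \geq n_0 \geq \lceil 3M_2/\epsilon\rceil$ enters: it forces $\tfrac{n'\epsilon}{3} \geq M_2$, and combined with $n_i \geq \tfrac{n'}{3}(2+i+\epsilon)$ this upgrades the crude estimate to $2n_i + m_i \geq (2+i)n' + \tfrac{(2+i)n'}{3} \geq (\ell+1)n$.

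For the inductive step itself: given $g \in K_{(2+i)n'} \supseteq K_{\ell n}$, hypothesis (ii) yields $g_j \in K_{n_i}$, $h_j \in K_{m_i}$ with $g \equiv [g_1,h_1]\cdots[g_A,h_A] \bmod K_{2n_i + m_i}$. Because $\tfrac{n'}{3}(2+i+\epsilon) \leq n_i \leq m_i \leq \tfrac{2n'}{3}(2+i)$, one has $n < n_i \leq m_i < (2+i)n' \leq \ell n$, so $K_{\ell n} \subseteq K_{n_i} \cap K_{m_i}$ while $K_{n_i}, K_{m_i} \subseteq K_n$; hence the inductive hypothesis supplies $g_j', h_j' \in X^{B_\ell}$ with $g_j \equiv g_j'$, $h_j \equiv h_j' \bmod K_{\ell n}$, and since the correcting factors lie in $K_{\ell n}$ one even has $g_j' \in K_{n_i}$, $h_j' \in K_{m_i}$. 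Lemma \ref{SKlem1}(i') then gives $[g_j,h_j]^{-1}[g_j',h_j'] \in K_{\min(m_i + \ell n, n_i + \ell n)} = K_{n_i + \ell n}$, and $n_i + m_i = (2+i)n' - M_2 \leq \ell n$ forces $n_i + \ell n \geq 2n_i + m_i$; so $g \equiv [g_1',h_1']\cdots[g_A',h_A'] \bmod K_{2n_i+m_i} \subseteq K_{(\ell+1)n}$, a word of length at most $4A B_\ell$ in $X$. Composing this with the inductive description of $K_n/K_{\ell n}$ (split each element of $K_n$ as an $X^{B_\ell}$-word times an element of $K_{\ell n} \subseteq K_{(2+i)n'}$, then apply the previous sentence) yields $K_n/K_{(\ell+1)n} \subseteq K_{(\ell+1)n} X^{(4A+1)B_\ell}/K_{(\ell+1)n}$, so we may take $B_{\ell+1} = (4A+1)B_\ell \leq (4A+1)^{\ell-2}$. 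Six applications carry $B_3 = 1$ up to $B_9 \leq (4A+1)^6$, and since $K_{3n} \subseteq K_n$ this gives the displayed implication.

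I expect the only genuinely delicate step to be the case-checking in the middle paragraph: one must confirm that for each of the six layers there is an instance of hypothesis (ii) whose target is low enough and whose output is deep enough, and whose commutator-generators fall in the window $n < n_i \leq m_i < \ell n$ that the conversion step requires. This is finite but fiddly, and it is precisely here that the two quantitative features of hypothesis (ii) --- the margin $\epsilon$ in the lower bound for $n_i$ and the defect $M_2$ in $n_i + m_i$ --- have to be played off against each other, which is the reason for the choice $n_0 \geq \lceil 3M_2/\epsilon\rceil$ (and $n_0 \geq 2M_1$, so that hypothesis (ii) is legitimately invoked at scales $n$ and $2n$). Everything downstream --- finite generation, and the passage from (\ref{telescopingquotients}) to a diameter bound --- is then inherited verbatim from Lemmas \ref{SKlem1} and \ref{SKlem2}, exactly as in the proof of Proposition \ref{SolovayKitaev1}.
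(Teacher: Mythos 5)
Your argument is correct and follows essentially the same route as the paper: both reduce to Lemma \ref{SKlem2} with $D=3$ and $B=(4A+1)^6$ by peeling off layers $K_{\ell n}/K_{(\ell+1)n}$ via hypothesis (ii) applied at scales $n$ and $2n$, converting the commutator entries with Lemma \ref{SKlem1}(i'), and playing the margin $\tfrac{n\epsilon}{3}\geq M_2$ against the defect $M_2$ exactly as you describe. The only difference is organizational --- the paper proves a reusable three-layer claim ($K_n/K_{3n}\subseteq K_{3n}X/K_{3n}\Rightarrow K_n/K_{6n}\subseteq K_{6n}X^{(4A+1)^3}/K_{6n}$) and applies it a second time at scale $2n$, whereas you flatten this into a single six-layer pass; your layer assignments and the inequality $2n_i+m_i\geq\tfrac{4(2+i)n'}{3}$ do check out for all six layers.
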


\begin{proof}
First claim that for any $n \geq max \lbrace 2 M_1 ,\frac{3M_2}{\epsilon} \rbrace$ and any $X \subseteq G$, 
\begin{equation} \label{clunkyclaim}
K_n / K_{3n} \subseteq K_{3n} X / K_{3n} \Rightarrow K_n / K_{6n} \subseteq K_{6n} X^{(4A+1)^3} / K_{6n}. 
\end{equation}
Let $g \in K_{3n}$. By hypothesis $(ii)$, 
there exist $g_1 , \ldots , g_A \in K_{n_1}$,\linebreak $h_1 , \ldots , h_A \in K_{m_1}$ such that:
\begin{center}
$g \equiv [g_1,h_1] \cdots [g_A,h_A] \text{ mod } K_{3n + n_1 - M_2}$. 
\end{center}
By assumption, there exist $g_1' , \ldots , g_A' , h_1' , \ldots , h_A' \in X$ 
such that $g_i \equiv g_i'$, $h_i \equiv h_i' \text{ mod } K_{3n}$, 
so that $g_i ' \in K_{n_1}$, $h_i ' \in K_{m_1}$. 
By Lemma \ref{SKlem1}, 
\begin{center}
$[g_i,h_i] \equiv [g_i',h_i'] \text{ mod } K_{3n + n_1}$. 
\end{center}
Hence $g \equiv [g_1',h_1'] \cdots [g_A',h_A'] \text{ mod } K_{3n + n_1 - M_2}$. Therefore:
\begin{center}
$K_{3n} / K_{3n+n_1 - M_2} \subseteq K_{3n+n_1 - M_2} X^{4A} / K_{3n+n_1 - M_2}$
\end{center}
and, combining with the hypothesis $K_n / K_{3n} \subseteq K_{3n} X / K_{3n}$, 
\begin{center}
$K_n / K_{3n+n_1 - M_2} \subseteq K_{3n+n_1 - M_2} X^{4A+1} / K_{3n+n_1 - M_2}$. 
\end{center}
In particular, since $n_1 \geq n + \frac{\epsilon n}{3} \geq n + M_2$, $K_n / K_{4n} \subseteq K_{4n} X^{4A+1} / K_{4n}$. 
\\ \\
We now simply repeat the same procedure: let $n_2 , m_2 \in \mathbb{N}$ be as above. 
We deduce: 
\begin{center}
$K_{4n} / K_{4n+n_2-M_2} \subseteq K_{4n+n_2-M_2} X^{4A(4A+1)} / K_{4n+n_2-M_2}$. 
\end{center}
Combining this estimate with $K_n / K_{4n} \subseteq K_{4n} X^{4A+1} / K_{4n}$, 
and since \linebreak$4n+n_2-M_2 \geq 5n$, we have:
\begin{center}
$K_n / K_{5n} \subseteq K_{5n} X^{(4A+1)^2} / K_{5n}$. 
\end{center}
Finally let $n_3 , m_3 \in \mathbb{N}$ be as above. We have: 
\begin{center}
$K_{5n} / K_{5n+n_3-M_2} \subseteq K_{5n+n_3-M_2} X^{4A(4A+1)^2} / K_{5n+n_3-M_2}$. 
\end{center}
Combining with $K_n / K_{5n} \subseteq K_{5n} X^{(4A+1)^2} / K_{5n}$, 
since $5n+n_3 \geq 6n$, the claim follows. 
\\ \\
In particular, (\ref{clunkyclaim}) implies that for $n \geq max \lbrace 2 M_1 ,\frac{3M_2}{\epsilon} \rbrace$, 
\begin{center}
$K_n / K_{3n} \subseteq K_{3n} X / K_{3n} \implies K_{2n} / K_{6n} \subseteq K_{6n} X^{(4A+1)^3} / K_{6n}$. 
\end{center}
Applying (\ref{clunkyclaim}) again, with $n$ replaced by $2n$ and $X$ replaced by $X^{(4A+1)^3}$, 
\begin{center}
$K_{2n} / K_{12n} \subseteq K_{12n} X^{(4A+1)^6} / K_{12n}$
\end{center}
so that in particular, $K_{3n} / K_{9n} \subseteq K_{9n} X^{(4A+1)^6} / K_{9n}$. 
The result now follows from Lemma \ref{SKlem2}, applied with $B = (4A+1)^6$, $D=3$. 
\end{proof}

The proof of Proposition \ref{SolovayKitaev2} is sufficiently robust that qualitatively similar 
(though quantitatively worse) diameter bounds should be available under even weaker hypotheses. 
We shall not pursue such results here, 
as the level of generality already achieved is sufficient for all the examples we shall consider. 
We conclude this section by noting some cases in which hypothesis (i) 
of Propositions \ref{SolovayKitaev1} and \ref{SolovayKitaev2} is always satisfied. 

\begin{ex} \label{Rings}
\begin{itemize}
\item[(i)] Let $G$ be \emph{any} pro-$p$ group; $K_n$ be the $n$th term of the lower central $p$-series for $G$. 

\item[(ii)] Let $R$ be a unital profinite ring; $G \leq R^*$; 
$I \vartriangleleft R$ a proper two-sided open ideal. 
Define $K_n = G \cap (1 + I^n) \vartriangleleft G$. Let $n,m \in \mathbb{N}$ 
with $n \leq m$ and let $g \in K_n$, $h \in K_m$. 
Let $a , \tilde{a} \in I^m$, $b , \tilde{b} \in I^n$ be such that: 
\begin{center}
$g=1+a$, $g^{-1}=1+\tilde{a}$, $h=1+b$, $h^{-1}=1+\tilde{b}$. 
\end{center}
Then $a+\tilde{a}+ \tilde{a} a = b+\tilde{b}+ \tilde{b} b = 0$, so: 
\begin{center}
$[g,h] \equiv 1+ab+\tilde{a}b+\tilde{a} \tilde{b}+ \tilde{b}a$ 

$\equiv 1 + ab-ba \text{ mod } I^{2n + m}$. 
\end{center}
In particular, $[g,h] \in K_{n+m}$. 

\item[(iii)] As a particular case of (ii), letting $R = \mathbb{F}_p G$ and $I \vartriangleleft R$ 
be the augmentation ideal, $K_n$ is the $n$th mod-$p$ dimension subgroup of $G$. 
\end{itemize}
\end{ex}

\section{Classical Groups over $R$} \label{classicalsection}

In this section we prove Theorem \ref{chevdiam} in the case for which $X_l$ is classical, 
so that the associated adjoint Chevalley group over $R$ is one of 
$PSL_d (R)$, $PSO_d (R)$, or $PSp_{d} (R)$ (with $d$ even in the latter case). 
To be more precise,\linebreak we prove the diameter bound for $G = SL_d (R) , SO_d (R)$ 
or $Sp_{d} (R)$; \linebreak$K_n = G \cap (I_d + \mathcal{P}^n \mathbb{M}_d (R))$. 
It shall be useful at this point to make a general observation, 
to the effect that $diam$ behaves well with respect to extensions. 

\begin{lem} \label{easyextnlem}
Let $G$ be a finite group, $K \vartriangleleft G$. Then:
\begin{itemize}
\item[(i)] $diam(G/K) \leq diam(G)$. 
\item[(ii)] $diam(G) \leq (2 \cdot diam(G/K)+1) \cdot (diam(K)+\frac{1}{2})-\frac{1}{2}$. 
\end{itemize}
\end{lem}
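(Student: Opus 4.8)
The plan is to reduce each inequality to a statement about word-balls and then count. For part (i), fix a generating set $\overline{S}$ of $G/K$. I would lift each element of $\overline{S}$ to an element of $G$, obtaining a set $S \subseteq G$, and adjoin to $S$ a generating set of $K$ (or of all of $G$) so that $\langle S \rangle = G$ and $S$ projects onto $\langle \overline{S} \rangle = G/K$. Since the quotient map $\pi : G \to G/K$ sends $B_S(n)$ onto $B_{\pi(S)}(n) \supseteq B_{\overline{S}}(n)$, we get $diam(G/K, \overline{S}) \leq diam(G/K, \pi(S)) \leq diam(G,S) \leq diam(G)$. Taking the maximum over $\overline{S}$ gives (i). (In fact it is cleaner to observe directly: for any $S$ with $\langle S \rangle = G$, $diam(G/K, \pi(S)) \leq diam(G,S)$, and every generating set of $G/K$ arises, up to enlarging, as such a $\pi(S)$; but since $diam(G/K,\overline S)$ only decreases when $\overline S$ is enlarged, it suffices to realise each $\overline S$ as contained in some $\pi(S)$.)

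For part (ii), fix a generating set $S$ of $G$ and write $d_1 = diam(G/K, \pi(S))$, $d_2 = diam(K, S \cap \text{(something)})$ — more carefully, let $d_1 = diam(G/K)$ and $d_2 = diam(K)$, the worst-case diameters. Given $g \in G$: first, since $\pi(S)$ generates $G/K$, there is a word $w$ of length $\leq d_1$ in $S \cup S^{-1} \cup \{1\}$ with $\pi(w) = \pi(g)$, hence $w^{-1} g \in K$. Now I need a generating set of $K$ expressible by short words in $S$: the standard trick is that $K$ is generated by the set $T = \{ w s w^{-1} : s \in S \cup S^{-1}, \ w \text{ a word of length} \leq d_1 \text{ in } S\} \cap K$, or more simply one uses that every element of $K$ lies in $B_S(2d_1 + 1)$-conjugates — but to keep the constant as stated I would instead argue as follows. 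Every element of $K$ is a product of at most $d_2$ elements of $T \cup T^{-1} \cup \{1\}$ where $T$ is a symmetric generating set of $K$ each of whose elements lies in $B_S(2 d_1 + 1)$ (this is the Reidemeister–Schreier-type bound: a Schreier generator has the form $\overline{t}\, s\, \overline{ts}^{-1}$ with $\overline{t}, \overline{ts}$ coset representatives of length $\leq d_1$ and $s \in S$). Hence $w^{-1} g \in B_S((2 d_1 + 1) d_2)$, so $g = w \cdot (w^{-1} g) \in B_S(d_1 + (2 d_1 + 1) d_2)$. Therefore $diam(G, S) \leq d_1 + (2 d_1 + 1) d_2 = (2 d_1 + 1)(d_2 + \tfrac12) - \tfrac12$, and taking the max over $S$ gives (ii).

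The only real subtlety — and the step I would be most careful about — is producing the generating set $T$ of $K$ whose elements have $S$-length at most $2 d_1 + 1$, and checking that $d_2 = diam(K)$ (worst-case over all generating sets of $K$) genuinely dominates the diameter of $K$ with respect to this particular $T$. This is exactly the content of the Schreier generating set for a finite-index subgroup: choosing a transversal $\{\overline{t}\}$ for $K$ in $G$ consisting of minimal-length words (so $|\overline t|_S \leq d_1$), the Schreier generators $\overline{t}\, s\, \overline{ts}^{-1}$ ($s \in S$) generate $K$ and each has $S$-length $\leq d_1 + 1 + d_1 = 2 d_1 + 1$; since this is one specific generating set of $K$, its diameter is at most $diam(K)$. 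Everything else is bookkeeping with word-balls, and the arithmetic $d_1 + (2d_1+1) d_2 = (2 d_1 + 1)(d_2 + \tfrac12) - \tfrac12$ is immediate.
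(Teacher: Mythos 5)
Your proof of (ii) is exactly the paper's argument: a transversal to $K$ inside $B_S(diam(G/K))$ containing $1$, Schreier generators of $K$ of $S$-length at most $2\, diam(G/K)+1$, and the bookkeeping $d_1+(2d_1+1)d_2=(2d_1+1)(d_2+\tfrac{1}{2})-\tfrac{1}{2}$; part (i) the paper leaves as straightforward, and your main construction (lift $\overline{S}$ and adjoin generators of $K$, so that $\pi(S)=\overline{S}\cup\{1\}$ and hence $B_{\pi(S)}(n)=B_{\overline{S}}(n)$) is the right one. The only thing to tidy is the justification in (i): the inference from $B_{\pi(S)}(n)\supseteq B_{\overline{S}}(n)$ to $diam(G/K,\overline{S})\leq diam(G/K,\pi(S))$ points the wrong way (a larger generating set can only decrease the diameter), as does the parenthetical reduction to realising $\overline{S}$ as merely \emph{contained} in some $\pi(S)$; you need equality of the balls, which your specific choice of $S$ does provide since the adjoined generators all project to the identity.
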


\begin{proof}
\begin{itemize}
\item[(i)] is straightforward. 
\item[(ii)] Let $S \subseteq G$ be a generating set. 
Then $B_S (diam(G/K))$ contains a transversal $T$ to $K$ in $G$, with $1 \in T$. 
By the Reidemeister-Schreier process, $B_S (2 \cdot diam(G/K) + 1)$ 
contains a generating set for $K$. Hence: 
\begin{center}
$diam(G,S) \leq diam(G/K)+diam(K) \cdot (2 \cdot diam(G/K)+1)$
\end{center}
as required. 
\end{itemize}
\end{proof}

The required result for the adjoint form then follows straightforwardly: 
letting $\rho : G \rightarrow GL_D (R)$ be the adjoint representation of $G$ 
on the associated Lie algebra (of dimension $D$), for any $g \in G$, 
if $g \equiv I_d \text{ mod } \mathcal{P}^n$ then \linebreak$\rho(g) \equiv I_D \text{ mod } \mathcal{P}^n$. 
Thus letting $K_n = G \cap (I_d + \mathcal{P}^n \mathbb{M}_d (R))$, \linebreak
$L_n = \rho(G) \cap (I_D + \mathcal{P}^n \mathbb{M}_D (R))$, 
$\rho$ descends to an epimorphism \linebreak$G / K_n \twoheadrightarrow \rho(G) / L_n$. 
By Lemma \ref{easyextnlem} (i), 
\begin{center}
$diam (\rho(G) / L_n) \leq diam(G / K_n) \leq C_1 (log \lvert G / K_n \rvert)^{C_2}$. 
\end{center}
The polylogarithmic diameter bound in $\lvert G / K_n \rvert$ then translates 
to a polylogarithmic bound in $\lvert \rho(G) / L_n \rvert$ (with possibly larger constant $C_1$). 
For $\lvert G / K_n \rvert \ll \lvert R/ \mathcal{M} \rvert^{d^2 n}$ 
and $\lvert \rho(G) / L_n \rvert \gg \lvert R/ \mathcal{M} \rvert^n$. 
\\ \\
We verify the hypotheses of Proposition \ref{SolovayKitaev1} 
for $G = SL_d (R)$, $SO_d (R)$, or $Sp_{d} (R)$. 
Recall that we permit ourselves the assumption that $p \geq 3$ unless $G = SL_d (R)$ and $d \geq 3$. 
Hypothesis (i) follows immediately from Example \ref{Rings} (ii). 
Moreover, for $g \in K_n$, $h \in K_m$, with $n \leq m \leq 2n$, writing:
\begin{center}
$g = I_d + \mathcal{P}^n X$; $h = I_d + \mathcal{P}^m Y$
\end{center}
for some $X,Y \in \mathbb{M}_d (R)$, we have: 
\begin{center}
$[g,h] \equiv I_d + \mathcal{P}^{m+n}(X,Y) \text{ mod } \mathcal{P}^{m+2n}$
\end{center}
where $(X,Y)=XY-YX$ is the Lie bracket. Hence for $g_1 , \ldots , g_A \in K_n$, $h_1 , \ldots , h_A \in K_m$, 
writing $g_i=I_d + \mathcal{P}^n X_i$, $h_i=I_d + \mathcal{P}^m Y_i$, we have: 
\begin{center}
$[g_1,h_1] \cdots [g_A,h_A] 
\equiv I_d + \mathcal{P}^{m+n}((X_1,Y_1)+ \ldots +(X_A,Y_A)) \text{ mod }\mathcal{P}^{m+2n}$. 
\end{center}
To verify hypothesis (ii) of Proposition \ref{SolovayKitaev1}, 
it therefore suffices to find $A \in \mathbb{N}$ (independent of $G$) such that, for any $g \in K_{m+n}$, 
we can find $X_1 , \ldots X_A$, $Y_1 , \ldots , Y_A \in \mathbb{M}_d (R)$ such that:
\begin{itemize}
\item[(a)] $g-I_d \equiv \mathcal{P}^{m+n}((X_1,Y_1)+ \ldots +(X_A,Y_A)) \text{ mod }\mathcal{P}^{m+2n}$; 
\item[(b)] There exist $g_1 , \ldots , g_A \in K_n$, $h_1 , \ldots , h_A \in K_m$ such that
\begin{center}
$g_i-I_d \equiv \mathcal{P}^n X_i \text{ mod } \mathcal{P}^{2n}$, 
$h_i-I_d \equiv \mathcal{P}^m Y_i \text{ mod } \mathcal{P}^{2m}$ 
\end{center}
for $1 \leq i \leq A$. 
\end{itemize}
As in the statement of Proposition \ref{SolovayKitaev1}, 
finding $A$ independent of $G$ yields an exponent $C_2$ in Theorem \ref{chevdiam} independent of $X_l$. 
For $G = SL_d (R)$, $SO_d (R)$ or $Sp_{d} (R)$, 
let $\mathfrak{g} = \mathfrak{sl}_d (R)$, $\mathfrak{so}_d (R)$ 
or $\mathfrak{sp}_d (R)$ be the associated Lie ring over $R$. 
Conditions (a), (b) above will follow straightforwardly from the following, 
which we verify for each group scheme in turn: 
\begin{itemize}
\item[(a')] For every $n \in \mathbb{N}$ and every $g \in K_n$, 
there exists $X \in \mathfrak{g}$ such that such that $g - I_d \equiv \mathcal{P}^n X \text{ mod } \mathcal{P}^{2n}$. 
\item[(b')] There exists $A \in \mathbb{N}$ (independent of $\mathfrak{g}$) 
such that every element of $\mathfrak{g}$ is the sum of at most $A$ brackets in $\mathfrak{g}$ 
(as we shall see, it suffices to take $A=3$). 
\item[(c')] There exists $\mathcal{B} \subseteq \mathfrak{g}$, generating $\mathfrak{g}$ as a $\mathbb{Z}$-module, 
such that for every $n \in \mathbb{N}$ and every $X \in \mathcal{B}$, 
there exists $g \in K_n$ such that such that $g - I_d \equiv \mathcal{P}^n X \text{ mod } \mathcal{P}^{2n}$. 
\end{itemize}
For, given $g \in K_{n+m}$, we immediately produce $X_i , Y_i$ as in (a) by applying (a'), (b') to $g$. 
Now writing an arbitrary element $Z \in \mathfrak{g}$ as $\sum_{i=1} ^r Z_i$, for $Z_i \in \mathcal{B}$, 
and letting $k_1 , \ldots , k_r \in K_l$ be such that 
$k_i - I_d \equiv \mathcal{P}^l Z_i \text{ mod } \mathcal{P}^{2l}$ as in (c'), we have: 
\begin{center}
$I_d + \mathcal{P}^l Z \equiv k_1 \cdots k_r \in K_l \text{ mod } \mathcal{P}^{2l}$. 
\end{center}
Applying this observation to $X_i , Y_i$ with $l = n,m$ respectively, we obtain $g_i , h_i$ as in (b). 

\subsection{$SL_d$}

Let $\mathfrak{sl}_d (R)$ denote the space of traceless $d \times d$ matrices over $R$; 
it is spanned over $R$ by the matrices $E_{i,j}$, $D_{a,b}$, 
for $i \neq j$, $a <b$, where:
\begin{center}
$(E_{i,j})_{r,s} = \delta_{i,r} \delta_{j,s}$, 
$(D_{a,b})_{r,s} = \delta_{a,r} \delta_{a,s} - \delta_{b,r} \delta_{b,s}$. 
\end{center}

\begin{itemize}
\item[(a')] Let $g \in K_n$. Write $g=I_d+\mathcal{P}^n X$, for some $X \in \mathbb{M}_d (R)$. Then:
\begin{center}
$1 = det(g) \equiv 1+\mathcal{P}^n tr(X) \text{ mod } \mathcal{P}^{2n}$
\end{center}
so $tr(X) \equiv 0 \text{ mod }\mathcal{P}^n$. Hence there exists 
$X' \in \mathfrak{sl}_d (R)$ such that \linebreak$X \equiv X' \text{ mod }\mathcal{P}^n$. 

\item[(b')] First suppose $d \geq 3$. 
Define the $R$-module endomorphisms \linebreak$T_1 , T_2 : \mathfrak{sl}_d (R) \rightarrow \mathfrak{sl}_d (R)$ by:
\begin{center}
$T_1 (X) = (X,\sum_{i=1} ^{d-1} E_{i+1,i})$, $T_2 (X) = (X,\sum_{i=1} ^{d-1} E_{i,i+1})$. 
\end{center}
Then: 
\begin{center}
$D_{j,j+1}=T_1 (E_{j,j+1})$ for $j=1, \ldots ,d-1$, 
\\
$E_{i,j-1}-E_{i+1,j}=T_1 (E_{i,j})$ for $1 \leq i \leq d-1$, $i+2 \leq j \leq d$, 
\\
$E_{1,i+1}=T_1(-E_{i,1})$ for $2 \leq i \leq d-1$, 
\\
$E_{3,2}-2E_{2,1}=T_1 (D_{1,2})$. 
\end{center}
Transposing, we also have:
\begin{center}
$\lbrace E_{i-1,j}-E_{i,j+1} : 1 \leq j \leq d-1$, $j+2 \leq i \leq d \rbrace $
$\cup \lbrace E_{j+1,1} : 2 \leq j \leq d-1 \rbrace 
\cup \lbrace E_{2,3}-2E_{1,2} \rbrace \subseteq im (T_2)$. 
\end{center}
It may therefore be seen that $im (T_1) \cup im (T_2)$ contains an $R$-basis for $\mathfrak{sl}_d (R)$, 
so $\mathfrak{sl}_d (R) = im (T_1) + im (T_2)$. 
Now suppose $d=2$ and $p > 2$. Then for any $a,b,c, \in R$, 
\begin{center}
$\left( \begin{array}{cc} a & b \\ c & -a \end{array} \right)
=(\left( \begin{array}{cc} 0 & -b \\ c & 0 \end{array} \right),\left( \begin{array}{cc} \frac{1}{2} & 0 \\ 0 & - \frac{1}{2} \end{array} \right))
+(\left( \begin{array}{cc} 0 & a \\ 0 & 0 \end{array} \right),\left( \begin{array}{cc} 0 & 0 \\ 1 & 0 \end{array} \right))$. 
\end{center}

\item[(c')] Let $\mathcal{B} = \lbrace x E_{i,j} : x \in R, i \neq j \rbrace \cup \lbrace x (D_{a,b}+E_{a,b}-E_{b,a}) : x \in R, a \leq b \rbrace$.\linebreak Then $\mathcal{B}$ clearly spans $\mathfrak{sl}_d (R)$ and,
for any $n \in \mathbb{N}$, $X \in \mathcal{B}$, \linebreak$det(I_d + \mathcal{P}^n X) = 1$. 
\end{itemize}

\begin{rmrk}
The preceding argument breaks down for $d = 2$, $p=2$. 
Let $X , Y \in \mathbb{M}_2 (R)$ with $tr(X) = tr(Y) = 0$. Then: 
\begin{center}
$(X,Y) \equiv (X_{12} Y_{21} - X_{21} Y_{12}) 
\left( \begin{array}{cc} 1 & 0 \\ 0 & -1 \end{array} \right) \text{ mod }\mathcal{P}$. 
\end{center}
Hence we cannot express an arbitrary traceless matrix as a sum of brackets, 
as we do above in higher characteristic. 
\end{rmrk}

\subsection{$SO_d$}

Denote by $\mathfrak{so}_d (R)$ the space of skew-symmetric $d \times d$ matrices over $R$; 
it is spanned over $R$ by the matrices $X_{i,j} = E_{i,j}-E_{j,i}$, for $1 \leq i < j \leq d$. 
\begin{itemize}
\item[(a')] Let $g \in K_n$. Write $g=I_d+\mathcal{P}^n X$, for some $X \in \mathbb{M}_d (R)$. Then:
\begin{center}
$I_d = (I_d + \mathcal{P}^n X)(I_d + \mathcal{P}^n X^T) 
\equiv I_d +\mathcal{P}^n (X+X^T) \text{ mod } \mathcal{P}^{2n}$
\end{center}
so $X^T \equiv -X \text{ mod }\mathcal{P}^n$. Hence there exists 
$X' \in \mathfrak{so}_d$ 
such that \linebreak$X \equiv X' \text{ mod }\mathcal{P}^n$. 

\item[(b')] Define the $R$-module endomorphisms 
$T_1 , T_2 , T_3 : \mathfrak{so}_d (R) \rightarrow \mathfrak{so}_d (R)$ by:
\begin{center}
$T_1 (X) = (X,\sum_{i=1} ^{d-1} X_{i,i+1})$, $T_2 (X) = (X,X_{1,d-1} + X_{1,d} + X_{2,d})$, $T_3 (X) = (X,X_{1,2})$. 
\end{center}
Then for $1 < i < d-1$, 
\begin{center}
$X_{i,i+2}-X_{i-1,i+1}=T_1 (X_{i,i+1})$; $X_{i+1,d}-X_{i-1,d}-X_{i,d-1}=T_1 (X_{i,d})$. 
\end{center}
For $1 < j < d-1$,
\begin{center}
$X_{2,j}+X_{1,j+1}-X_{1,j-1}=T_1 (X_{1,j})$. 
\end{center}
For $1<i,j<d$, with $i+1<j$, 
\begin{center}
$X_{i+1,j}+X_{i,j+1}-X_{i-1,j}-X_{i,j-1}$. 
\end{center}
For $3 \leq j \leq d-2$, 
\begin{center}
$X_{1,j}=T_2 (X_{j,d-1})$; $X_{j,d}=T_2 (-X_{2,j})$
\end{center}
and:
\begin{center}
$X_{1,2}=T_2 (-X_{2,d})$; $X_{d-1,d}=T_2 (X_{1,d-1})$; 
$X_{1,d-1}=T_3(-X_{2,d-1})$; $X_{1,d}=T_3(-X_{2,d})$; $X_{2,d}=T_3(-X_{1,d})$. 
\end{center}
Therefore $im (T_1) \cup im (T_2) \cup im (T_3)$ contains an $R$-basis for $\mathfrak{so}_d (R)$, 
so $\mathfrak{so}_d (R) = im (T_1) + im (T_2) + im (T_3)$. 

\item[(c')] For $\alpha \in R$, $l \in \mathbb{N}$, 
consider the polynomial $f(X) = X^2 - (1-\alpha^2 \mathcal{P}^{2l})$. \linebreak
Then $f(1)=\alpha^2 \mathcal{P}^{2l} \equiv 0 \text{ mod }\mathcal{P}^{2l}$ 
but $f'(1)=2 \not\equiv 0 \text{ mod } \mathcal{P}$. 
By Hensel's Lemma, there exists $\beta \in R$ such that $f(\beta)=0$ 
and \linebreak$\beta \equiv 1 \text{ mod } \mathcal{P}^{2l}$. 
Hence for any $i \neq j$, 
\begin{center}
$g_{i,j} ^{(l)} (\alpha)
 := I_d + \alpha \mathcal{P}^l (E_{i,j}-E_{j,i}) + (\beta-1) (E_{i,i}+E_{j,j}) \in K_l$ 
\end{center}
and $g_{i,j} ^{(l)} (\alpha) \equiv I_d + \alpha \mathcal{P}^l (E_{i,j}-E_{j,i}) \text{ mod }\mathcal{P}^{2l}$. 
\end{itemize}

\begin{rmrk}
In contrast to the cases of $SL_d (R)$ and $Sp_d (R)$, 
$SO_d (R)$ is not in general the universal form of the Chevalley group of its type; 
the universal form is rather a proper central extension of $SO_d (R)$ by a finite group. 
Increasing the constant $C_1$ in Theorem \ref{chevdiam}, 
the diameter bounds obtained above for $SO_d$ extend to the universal form by Lemma \ref{easyextnlem} (ii). 
\end{rmrk}

\subsection{$Sp_d$}

Let $d=2g$ and let $\Omega = \left( \begin{array}{cc} 0 & I_g \\ -I_g & 0 \end{array} \right)$, 
so that $\mathfrak{sp}_d (R)$ is the set of $d \times d$ matrices $X$ 
over $R$ satisfying the relation $X^T \Omega + \Omega X = 0$. Suppose $p > 2$. 

For $1 \leq i,j \leq g$, define the matrices:
\begin{center}
$A_{i,j}=\left( \begin{array}{cc} E_{i,j} & 0 \\ 0 & -E_{j,i} \end{array} \right)$ , 
$B_{i,j}=\left( \begin{array}{cc} 0 & E_{i,j}+E_{j,i} \\ 0 & 0 \end{array} \right)$ , 
$C_{i,j}=\left( \begin{array}{cc} 0 & 0 \\ E_{i,j}+E_{j,i} & 0 \end{array} \right) \in \mathfrak{sp}_d (R)$. 
\end{center}

We have:
\begin{center}
$(A_{i,j},A_{k,l}) = \delta_{j,k} A_{i,l} - \delta_{i,l} A_{k,j}$, \\
$(A_{i,j},B_{k,l}) = \delta_{j,k} B_{i,l} + \delta_{j,l} B_{i,k}$, \\
$(A_{i,j},C_{k,l}) = \delta_{i,l} C_{j,k} - \delta_{i,k} C_{j,l}$, \\
$(B_{i,j},C_{k,l}) = \delta_{j,k} A_{i,l} + \delta_{j,l} A_{i,k} + \delta_{i,k} A_{j,l} + \delta_{i,l} A_{j,k}$. 
\end{center}
Hence: 
\begin{center}
$A_{i,j} = (A_{i,j},A_{j,j})$, for $i \neq j$, \\
$A_{i,i} = (\frac{1}{2} B_{i,i},\frac{1}{2} C_{i,i})$, \\
$B_{i,j} = (A_{i,k},B_{k,j})$, for $i \neq k \neq j$, \\
$C_{i,j} = (A_{k,i},C_{j,k})$, for $i \neq k \neq j$. 
\end{center}

\begin{itemize}
\item[(a')] Let $g \in K_n$. Write $g=I_d+\mathcal{P}^n X$, 
for some $X \in \mathbb{M}_d (R)$. Then:
\begin{center}
$\Omega=g^T \Omega g=\Omega+\mathcal{P}^n (\Omega X+X^T\Omega)+\mathcal{P}^{2n} X^T\Omega X$ \linebreak
$\equiv \Omega+\mathcal{P}^n (\Omega X+X^T\Omega) \text{ mod } \mathcal{P}^{2n}$
\end{center}
so $\Omega X+X^T\Omega \equiv 0 \text{ mod } \mathcal{P}^n$. Hence there exists 
$X' \in \mathfrak{sp}_d (R)$ such that $X \equiv X' \text{ mod }\mathcal{P}^n$. 

\item[(b')] Define the $R$-module endomorphisms 
$U_1 , U_2 : \mathfrak{sp}_d (R) \rightarrow \mathfrak{sp}_d (R)$ by:
\begin{center}
$U_1 (X) = (X,\sum_{i=1} ^g A_{i,i})$, $U_2 (X) = (X,\sum_{i=1} ^g (B_{i,i}+C_{i,i}))$. 
\end{center}
Then for any $1 \leq i,j \leq g$, $B_{i,j},C_{i,j} \in im(U_1)$, $A_{i,j}+A_{j,i} \in im(U_2)$. 
Define the $R$-Lie subring $V \leq \mathfrak{sp}_d (R)$: 
\begin{center}
$V = \lbrace \left( \begin{array}{cc} X & 0 \\ 0 & X \end{array} \right) : X \in \mathfrak{gl}_g (R) \rbrace$. 
\end{center}
We show that, for any $X \in \mathfrak{so}_g (R)$,  
there exist $v_1 , v_2 \in V$ and symmetric $Z \in \mathfrak{gl}_g (R)$ such that: 
\begin{center}
$\left( \begin{array}{cc} X & 0 \\ 0 & X \end{array} \right) 
= (v_1,v_2) + \left( \begin{array}{cc} Z & 0 \\ 0 & -Z \end{array} \right)$. 
\end{center}
Now for an arbitrary element $v \in \mathfrak{sp}_d (R)$ there exist $X \in \mathfrak{so}_g (R)$, 
$B,C,Y \in \mathfrak{gl}_g (R)$ with $Y$ symmetric such that: 
\begin{center}
$v = \left( \begin{array}{cc} X & 0 \\ 0 & X \end{array} \right) 
+ \left( \begin{array}{cc} Y & 0 \\ 0 & -Y \end{array} \right) 
+ \left( \begin{array}{cc} 0 & B \\ C & 0 \end{array} \right)$
$=(v_1,v_2) + \left( \begin{array}{cc} 0 & B \\ C & 0 \end{array} \right) 
+ \left( \begin{array}{cc} Y+Z & 0 \\ 0 & -(Y+Z) \end{array} \right)$
\end{center}
and $\left( \begin{array}{cc} 0 & B \\ C & 0 \end{array} \right) \in im (U_1)$, 
$\left( \begin{array}{cc} Y+Z & 0 \\ 0 & -(Y+Z) \end{array} \right) \in im (U_2)$, 
so that every element of $\mathfrak{sp}_d (R)$ is expressible as a sum of three brackets. 
\\ \\
It will suffice to check that any element of $\mathfrak{so}_g (R)$ 
is expressible as the sum of a bracket in $\mathfrak{gl}_g (R)$ and a symmetric matrix. 
Define the $R$-module endomorphisms $S_1 , S_2 : \mathfrak{gl}_g (R) \rightarrow \mathfrak{gl}_g (R)$ by: 
\begin{center}
$S_1 (X) = (X,E_{1,1})$; $S_2 (X) = (X,\sum_{i=1} ^{d-1} (E_{i,i+1}-E_{i+1,i}))$
\end{center}
and for $X \in \mathfrak{gl}_g (R)$, write $X=X_1 + X_2$, 
with $X_1$ symmetric, $X_2$ skew-symmetric. Then:
\begin{center}
$(X,E_{1,1}+\sum_{i=1} ^{d-1} (E_{i,i+1}-E_{i+1,i}) - S_1 (X_1) - S_2 (X_2)$
\end{center}
is symmetric. We already described the image of $S_2 \mid_{\mathfrak{so}_g (R)}$ 
(in the guise of $T_1$ in our analysis of $SO_d$). For $2 \leq i \leq g$, 
\begin{center}
$S_1 (E_{1,i}+E_{i,1}) = E_{i,1}-E_{1,i}$. 
\end{center}
These elements, together with $im(S_2 \mid_{\mathfrak{so}_g (R)})$, span $\mathfrak{so}_g (R)$ over $R$, 
and the result follows. 

\item[(c')] For any $\alpha \in R$ and any $l \in \mathbb{N}$ we have:
\begin{center}
$I_d + \alpha \mathcal{P}^l B_{i,j} , I_d + \alpha \mathcal{P}^l C_{i,j} \in K_l$ for any $1 \leq i,j, \leq d$. 
\\
$I_d + \alpha \mathcal{P}^l A_{i,j} \in K_l$ provided $i \neq j$. 
\end{center}
Finally, 
$(1 + \alpha \mathcal{P}^l)^{-1} \equiv 1 - \alpha \mathcal{P}^l \text{ mod } \mathcal{P}^{2l}$, so:
\begin{center}
$K_l \ni 
I + \left( \begin{array}{cc} \alpha \mathcal{P}^l E_{i,i} & 0 \\ 0 & ((1 + \alpha \mathcal{P}^l)^{-1} -1) E_{i,i} \end{array} \right) 
\equiv I + \alpha t^\mathcal{P} A_{i,i} \text{ mod }\mathcal{P}^{2l}$. 
\end{center}
\end{itemize}

\begin{rmrk}
For $R=\mathbb{Z}_p$, the value $A=3$ was achieved in \cite{Dinai}, 
under the additional assumption that $p \geq \frac{l+2}{2}$, 
where $l$ is the rank of the associated Chevalley group scheme. 
This assumption was necessary in the specific manipulations the root systems which were applied in Dinai's argument. 
Hence even in the $p$-adic case, the results of this Section are new in large rank for small $p$. 
\end{rmrk}

\section{Analytic Pro-$p$ Groups} \label{AnalyticSection}

In this section we prove Theorem \ref{Rstandarddiam}. 
We start by recalling some preliminaries about groups with an $R$-analytic structure. 
Recall that $(R,\mathcal{M})$ is a discrete valuation pro-$p$ domain, 
with $\mathcal{M}$ generated by $\mathcal{P} \in \mathcal{M}$. 
For proofs of results quoted, refer to Chapter 13 of \cite{DiDuMaSe}. 

\begin{defn}
Denote by $R[[\underline{X},\underline{Y}]]$ the ring of formal non-commuting power series in the $2d$ variables 
$X_1 , \ldots , X_d , Y_1 , \ldots , Y_d$. 
For $i=1 , \ldots , d$, let $F_i (\underline{X},\underline{Y}) \in R[[\underline{X},\underline{Y}]]$. 
Then $\underline{F} = (F_1 , \ldots , F_d)$ is a \emph{formal group law}, 
of dimension $d$ over $R$, if: 
\begin{itemize}
\item[(i)] $\underline{F}(\underline{X},\underline{0}) = \underline{X}$ 
and $\underline{F}(\underline{0},\underline{Y}) = \underline{Y}$, 
\item[(ii)] $\underline{F}(\underline{X},\underline{F}(\underline{Y},\underline{Z})) 
= \underline{F}(\underline{F}(\underline{X},\underline{Y}),\underline{Z})$. 
\end{itemize}
\end{defn}

\begin{propn}[13.16 in \cite{DiDuMaSe}] \label{pwrseriesid}
Let $\underline{F}$ be a formal group law. 
There exist power series $\underline{B}(\underline{X},\underline{Y})$, $\underline{I}(\underline{X})$, 
$\underline{O}(\underline{X},\underline{Y})$, $\underline{P}(\underline{X})$, 
$\underline{Q}(\underline{X},\underline{Y})$, 
with $\underline{B}$ bilinear in $\underline{X}$ and $\underline{Y}$; 
every term of $\underline{O},\underline{P},\underline{Q}$ 
having total degree at least $3$ and every term of $\underline{O},\underline{Q}$ 
having degree at least $1$ in each of $\underline{X},\underline{Y}$, such that: 
\begin{itemize}
\item[(i)] $\underline{F}(\underline{X},\underline{Y})
=\underline{X}+\underline{Y}+\underline{B}(\underline{X},\underline{Y})+\underline{O}(\underline{X},\underline{Y})$, 
\item[(ii)] $\underline{I}(\underline{X})=-\underline{X}+\underline{B}(\underline{X},\underline{X})+\underline{P}(\underline{X})$
and $\underline{F}(\underline{X},\underline{I}(\underline{X}))=\underline{0}
=\underline{F}(\underline{I}(\underline{X}),\underline{X})$, 
\item[(iii)] $\underline{F}((\underline{I} \circ \underline{F})(\underline{Y},\underline{X}),\underline{F}(\underline{X},\underline{Y}))
=\underline{B}(\underline{X},\underline{Y})-\underline{B}(\underline{Y},\underline{X})
+\underline{Q}(\underline{X},\underline{Y})$. 
\end{itemize}
\end{propn}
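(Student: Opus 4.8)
The plan is to extract the three identities by expanding everything by homogeneous degree, working throughout in the $(\underline{X},\underline{Y})$-adic topology on $R[[\underline{X},\underline{Y}]]$ — a complete ring in which all the substitutions appearing are well defined, since every series involved has zero constant term. For (i): write $\underline{F} = \sum_{k \geq 1} \underline{F}_k$ with $\underline{F}_k$ homogeneous of degree $k$ (no degree-$0$ term, as $\underline{F}(\underline{0},\underline{0}) = \underline{0}$). Setting $\underline{Y} = \underline{0}$ kills every monomial involving some $Y_j$, so the axiom $\underline{F}(\underline{X},\underline{0}) = \underline{X}$ forces the sum of all monomials of $\underline{F}$ in the $X_i$ alone to equal $\underline{X}$, and symmetrically $\underline{F}(\underline{0},\underline{Y}) = \underline{Y}$ handles the $Y_j$. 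Hence $\underline{F} = \underline{X} + \underline{Y} + \underline{G}$ where every monomial of $\underline{G}$ has degree $\geq 1$ in some $X_i$ and $\geq 1$ in some $Y_j$; its lowest-degree part is bilinear, so take $\underline{B}$ to be the degree-$(1,1)$ part and $\underline{O} = \underline{G} - \underline{B}$, which by construction consists of terms of total degree $\geq 3$, each of degree $\geq 1$ in each of $\underline{X},\underline{Y}$.

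For (ii): build $\underline{I}(\underline{X}) = \sum_{k \geq 1} \underline{I}_k(\underline{X})$ recursively to satisfy $\underline{F}(\underline{X},\underline{I}(\underline{X})) = \underline{0}$, comparing homogeneous components via (i). Degree $1$ gives $\underline{X} + \underline{I}_1 = \underline{0}$, so $\underline{I}_1 = -\underline{X}$; degree $2$ gives $\underline{I}_2 + \underline{B}(\underline{X},\underline{I}_1) = \underline{0}$, so $\underline{I}_2 = \underline{B}(\underline{X},\underline{X})$ by bilinearity; in each degree $k \geq 2$ the unknown $\underline{I}_k$ occurs linearly with invertible (identity) coefficient and the remaining terms involve only $\underline{I}_1,\dots,\underline{I}_{k-1}$, so the recursion has a unique solution. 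Putting $\underline{P}(\underline{X}) = \sum_{k \geq 3} \underline{I}_k(\underline{X})$ yields the stated form. That $\underline{I}$ is also a left inverse is the usual monoid argument: if $\underline{J}$ is the (recursively constructed) right inverse of $\underline{I}$, then associativity of $\underline{F}$ gives $\underline{J} = \underline{F}(\underline{0},\underline{J}) = \underline{F}(\underline{F}(\underline{X},\underline{I}(\underline{X})),\underline{J}) = \underline{F}(\underline{X},\underline{F}(\underline{I}(\underline{X}),\underline{J})) = \underline{F}(\underline{X},\underline{0}) = \underline{X}$, so $\underline{F}(\underline{I}(\underline{X}),\underline{X}) = \underline{0}$.

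For (iii): write $\underline{W} = \underline{F}(\underline{X},\underline{Y})$ and $\underline{V} = \underline{F}(\underline{Y},\underline{X})$, so the quantity of interest is $\underline{F}(\underline{I}(\underline{V}),\underline{W})$. Substitute the expansions from (i) and (ii) and collect terms of total degree $\leq 2$: modulo degree $3$ one has $\underline{W} \equiv \underline{X}+\underline{Y}+\underline{B}(\underline{X},\underline{Y})$, $\underline{V} \equiv \underline{X}+\underline{Y}+\underline{B}(\underline{Y},\underline{X})$ and $\underline{I}(\underline{V}) \equiv -(\underline{X}+\underline{Y}) - \underline{B}(\underline{Y},\underline{X}) + \underline{B}(\underline{X}+\underline{Y},\underline{X}+\underline{Y})$, whence, expanding $\underline{B}$ by bilinearity, the degree-$1$ parts cancel and the degree-$2$ part collapses to $\underline{B}(\underline{X},\underline{Y}) - \underline{B}(\underline{Y},\underline{X})$. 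Set $\underline{Q}(\underline{X},\underline{Y})$ to be the remainder, of total degree $\geq 3$. To see every monomial of $\underline{Q}$ has degree $\geq 1$ in each of $\underline{X},\underline{Y}$: specialising $\underline{Y} = \underline{0}$ turns the whole expression into $\underline{F}(\underline{I}(\underline{X}),\underline{X}) = \underline{0}$ by (ii), and $\underline{X} = \underline{0}$ turns it into $\underline{F}(\underline{I}(\underline{Y}),\underline{Y}) = \underline{0}$; since $\underline{B}(\underline{X},\underline{Y}) - \underline{B}(\underline{Y},\underline{X})$ also vanishes under either specialisation, so does $\underline{Q}$.

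The only real bookkeeping hazard is the degree-$2$ computation in (iii): one must check that the purely-quadratic-in-$\underline{X}$ and purely-quadratic-in-$\underline{Y}$ contributions — which arise both from $\underline{B}(\underline{V},\underline{V})$ inside $\underline{I}(\underline{V})$ and from $\underline{B}(\underline{I}(\underline{V}),\underline{W})$ — cancel against each other, which they do precisely because the linear part of $\underline{I}$ is $-\mathrm{id}$. Everything else is a routine induction on degree, valid because $R[[\underline{X},\underline{Y}]]$ is complete for the relevant filtration.
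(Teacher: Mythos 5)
Your argument is correct. Note that the paper does not prove this proposition at all --- it is imported verbatim as 13.16 of \cite{DiDuMaSe} --- and your degree-by-degree extraction of $\underline{B}$ and $\underline{O}$ from the unit axioms, the recursive construction of $\underline{I}$ (with the left-inverse property via associativity), and the cancellation of the quadratic terms in (iii) together with the specialisation $\underline{X}=\underline{0}$ or $\underline{Y}=\underline{0}$ to control $\underline{Q}$, is exactly the standard proof given in that reference.
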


\begin{defn}
An \emph{$R$-standard group} of dimension $d$ is a topological group $(G, \cdot)$ 
with underlying space $G = \mathcal{M}^{(d)}$ 
such that there exists a formal group law $\underline{F}$ of dimension $d$ such that, for all $g,h \in G$, 
\begin{center}
$g \cdot h = \underline{F}(g,h)$. 
\end{center}
Note that, for $\underline{B},\underline{I},\underline{Q}$ as in Proposition \ref{pwrseriesid}, we have: 
\begin{center}
$g^{-1} = \underline{I}(g)$, $[g,h] = \underline{B}(g,h)-\underline{B}(h,g)+\underline{Q}(g,h)$. 
\end{center}
\end{defn}

\begin{ex}
\begin{itemize}
\item[(i)] $(\mathcal{M}^{(d)},+)$ is an $R$-standard group of dimension $d$. 

\item[(ii)] Let $GL_d ^1 (R) = I_d + \mathcal{P} \mathbb{M}_d (R)$. 
Then $GL_d ^1 (R) \leq GL_d (R)$ and, identifying $GL_d ^1 (R)$ 
with $\mathcal{M}^{(d^2)}$ in the obvious way, 
multiplication in $GL_d ^1 (R)$ is given by a formal group law of dimension $d^2$. 

\item[(iii)] Let $SL_d ^1 (R) = SL_d (R) \cap GL_d ^1 (R)$ be the kernel 
of the congruence map $SL_d (R) \twoheadrightarrow SL_d (R / \mathcal{M})$. 
Then we may identify $SL_d ^1 (R)$ with $\mathcal{M}^{(d^2 - 1)}$ 
via $A \mapsto ((A-I_d)_{i,j})_{(i,j)\neq(d,d)}$ 
(since these $d^2-1$ co-ordinates together with the determinant condition uniquely determine $A_{d,d}$). 
Under this identification, multiplication in $SL_d ^1 (R)$ is given by a formal group law of dimension $d^2 - 1$. 
\end{itemize}
\end{ex}

\begin{propn}[13.22 in \cite{DiDuMaSe}] \label{propfiltration}
For $n,m \in \mathbb{N}$, let $K_n = (\mathcal{M}^n)^{(d)} \subseteq G$. Then:
\begin{itemize}
\item[(i)] $K_n \vartriangleleft_o K_1 = G$, 
\item[(ii)] $[K_n , K_m] \subseteq K_{n+m}$, 
\item[(iii)] If $m \leq n$, $K_n / K_{n+m}$ 
is isomorphic to the additive group $(\mathcal{M}^n / \mathcal{M}^{n+m})^{(d)}$ 
\item[(iv)] $G \cong \varprojlim G / K_n$ is a pro-$p$ group. 
\end{itemize}
\end{propn}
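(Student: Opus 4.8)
The plan is to extract everything from Proposition \ref{pwrseriesid} by tracking $\mathcal{M}$-adic valuations through the relevant power series. The basic principle is that a monomial of degree $a$ in the variables $X_1, \ldots, X_d$ and degree $b$ in $Y_1, \ldots, Y_d$, with coefficient in $R$, evaluated at a point whose $\underline{X}$-coordinates lie in $\mathcal{M}^n$ and whose $\underline{Y}$-coordinates lie in $\mathcal{M}^m$, takes a value in $\mathcal{M}^{an+bm}$; and since $R$ is $\mathcal{M}$-adically complete and each $\mathcal{M}^j$ is closed in $R$, any power series over $R$ with zero constant term converges on $(\mathcal{M}^n)^{(d)} \times (\mathcal{M}^n)^{(d)}$ with value in $(\mathcal{M}^n)^{(d)}$. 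Applying this to the components $F_i$ (which have zero constant term by $\underline{F}(\underline{0},\underline{0}) = \underline{0}$) and to $\underline{I}$ (zero constant term by Proposition \ref{pwrseriesid}(ii)) shows that $K_n = (\mathcal{M}^n)^{(d)}$ is closed under multiplication and inversion, hence is a subgroup of $G = K_1$. It is open of finite index because $\mathcal{M}^n$ is open in $R$ and $R/\mathcal{M}^n$ is finite --- each $\mathcal{M}^k/\mathcal{M}^{k+1}$ being a finite-dimensional vector space over the finite residue field --- and $K_1/K_n$ is identified with $(\mathcal{M}/\mathcal{M}^n)^{(d)}$ as a set.

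For (ii), I would use the identity $[g,h] = \underline{B}(g,h) - \underline{B}(h,g) + \underline{Q}(g,h)$ from Proposition \ref{pwrseriesid} and the definition of an $R$-standard group. For $g \in K_n$, $h \in K_m$, bilinearity of $\underline{B}$ places $\underline{B}(g,h)$ and $\underline{B}(h,g)$ coordinatewise in $\mathcal{M}^n \mathcal{M}^m \subseteq \mathcal{M}^{n+m}$; and every term of $\underline{Q}$ has degree at least $1$ in each variable block and total degree at least $3$, so by the valuation principle above it evaluates at $(g,h)$ into $\mathcal{M}^{an+bm} \subseteq \mathcal{M}^{n+m}$. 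Thus each defining commutator of the subgroup $[K_n, K_m]$ already lies in $K_{n+m}$, so $[K_n, K_m] \subseteq K_{n+m}$. This in turn completes the proof of (i): for $g \in G$ and $h \in K_n$, the identity $g^{-1}hg = h[h,g]$ together with $[h,g] \in [K_n, K_1] \subseteq K_{n+1} \subseteq K_n$ gives $g^{-1}hg \in K_n$, so $K_n \vartriangleleft G$.

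For (iii), assume $m \leq n$ and consider the coordinatewise reduction $\phi: K_n \to (\mathcal{M}^n/\mathcal{M}^{n+m})^{(d)}$; it is surjective with kernel exactly $K_{n+m}$. Using $\underline{F}(\underline{X},\underline{Y}) = \underline{X} + \underline{Y} + \underline{B}(\underline{X},\underline{Y}) + \underline{O}(\underline{X},\underline{Y})$, for $g,h \in K_n$ the term $\underline{B}(g,h)$ lies coordinatewise in $\mathcal{M}^{2n} \subseteq \mathcal{M}^{n+m}$ (here the hypothesis $m \leq n$ is used) and every term of $\underline{O}$ has total degree at least $3$, so evaluates into $\mathcal{M}^{3n} \subseteq \mathcal{M}^{n+m}$; hence $\phi(g \cdot h) = \phi(g) + \phi(h)$, so $\phi$ is a homomorphism onto the additive group with kernel $K_{n+m}$, and the claim follows from the first isomorphism theorem. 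For (iv), $G = \mathcal{M}^{(d)}$ is a closed, hence complete, subspace of $R^{(d)}$; the $K_n$ form a neighbourhood basis of the identity with $\bigcap_n K_n = (\bigcap_n \mathcal{M}^n)^{(d)} = 0$ by the Krull intersection theorem in the Noetherian local ring $R$; so the natural map $G \to \varprojlim_n G/K_n$ is an isomorphism of topological groups. Each finite quotient $G/K_n$ has order $|\mathcal{M}/\mathcal{M}^n|^d$, a power of $p$ since the successive quotients $\mathcal{M}^k/\mathcal{M}^{k+1}$ are vector spaces over the characteristic-$p$ residue field; hence $G$ is a pro-$p$ group.

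None of the steps is genuinely hard once Proposition \ref{pwrseriesid} is available; the one place to be careful is the interdependence of the parts --- normality in (i) is not immediate from the power-series bookkeeping but falls out of (ii) --- together with the role of the hypothesis $m \leq n$ in (iii), which is exactly what is needed to absorb the bilinear term $\underline{B}$ into $K_{n+m}$. The only inputs from outside Proposition \ref{pwrseriesid} are the finiteness of the rings $R/\mathcal{M}^n$ and the vanishing of $\bigcap_n \mathcal{M}^n$, both standard facts about Noetherian local rings with finite residue field.
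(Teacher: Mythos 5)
Your proof is correct: the valuation bookkeeping through $\underline{F}$, $\underline{I}$, $\underline{B}$, $\underline{O}$, $\underline{Q}$, the deferral of normality until after (ii), and the use of $m\leq n$ to absorb the bilinear term are all sound. The paper itself offers no proof here --- it quotes the result from 13.22 of \cite{DiDuMaSe} --- and your argument is essentially the standard one given in that reference.
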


\begin{thm}[13.20 in \cite{DiDuMaSe}]
Let $G$ be an $R$-analytic group. Then $G$ has an open $R$-standard subgroup. 
\end{thm}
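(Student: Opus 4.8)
The plan is to extract the $R$-standard subgroup from a single analytic chart at the identity of $G$. After rescaling the chart, the power series describing multiplication near the identity will have coefficients in $R$; and for a formal group law over $R$ the ball $\mathcal{M}^{(d)}$ is automatically both inside the region of convergence and closed under the group operations, by the normal forms of Proposition \ref{pwrseriesid}.

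First I would fix an $R$-analytic chart $\phi$ at the identity $e\in G$, with $\phi(e)=\underline{0}$, and shrink its domain $U$ so that $\phi$ identifies $U$ with $(\mathcal{M}^{N_0})^{(d)}$ for some $N_0$ (where $d$ is the dimension of $G$; every neighbourhood of $\underline{0}$ in $R^{(d)}$ contains such a set), and so that $U$ is closed under taking products and inverses. Since multiplication $G\times G\to G$ is analytic, reading it off in $\phi$ yields a power series $\underline{F}=(F_1,\ldots,F_d)$ over $\mathbb{K}$, convergent on some $(\mathcal{M}^{N_1})^{(d)}\times(\mathcal{M}^{N_1})^{(d)}$ with $N_1\geq N_0$, such that $\phi(gh)=\underline{F}(\phi(g),\phi(h))$ on that set. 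Because $R$ is an infinite integral domain, a power series over $\mathbb{K}$ which converges and vanishes on a neighbourhood of $\underline{0}$ in $R^{(d)}$ must be identically zero (reduce to one variable, where a non-zero convergent series has isolated zeros, while a neighbourhood of $0$ in $R$ is not discrete). Applying this principle to the associativity and identity laws of $G$ expressed in the chart, I conclude that $\underline{F}$ satisfies axioms (i), (ii), so $\underline{F}$ is a formal group law over $\mathbb{K}$, and Proposition \ref{pwrseriesid} applies to it.

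The crucial step is the rescaling. For $N\in\mathbb{N}$ set $\underline{F}^{(N)}(\underline{X},\underline{Y})=\mathcal{P}^{-N}\underline{F}(\mathcal{P}^{N}\underline{X},\mathcal{P}^{N}\underline{Y})$; this is again a formal group law with the same linear part $\underline{X}+\underline{Y}$, and the coefficient of a monomial of total degree $k\geq 2$ in $\underline{F}$ is multiplied by $\mathcal{P}^{N(k-1)}$ in passing to $\underline{F}^{(N)}$. Convergence of $\underline{F}$ bounds the norm of its degree-$k$ coefficients by $C c^{-N_1 k}$ for an absolute constant $C$, so for $N$ sufficiently large (depending only on $N_0,N_1,C$) all coefficients of $\underline{F}^{(N)}$, and hence also those of the derived series $\underline{I}^{(N)},\underline{B},\underline{O},\underline{P}$ furnished by Proposition \ref{pwrseriesid}, lie in $R$. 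Now any power series over $R$ converges on $\mathcal{M}^{(d)}$; and writing $\underline{F}^{(N)}=\underline{X}+\underline{Y}+\underline{B}+\underline{O}$ and $\underline{I}^{(N)}=-\underline{X}+\underline{B}(\underline{X},\underline{X})+\underline{P}$, for any $\underline{x},\underline{y}\in\mathcal{M}^{(d)}$ the contributions of $\underline{B},\underline{O},\underline{P}$ land in $(\mathcal{M}^{2})^{(d)}$, so $\underline{F}^{(N)}(\underline{x},\underline{y})$ and $\underline{I}^{(N)}(\underline{x})$ lie in $\mathcal{M}^{(d)}$.

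Finally, taking $N$ also large enough that $(\mathcal{M}^{N+1})^{(d)}\subseteq(\mathcal{M}^{N_1})^{(d)}$, I would set $V=\phi^{-1}((\mathcal{M}^{N+1})^{(d)})$ and $\psi=\mathcal{P}^{-N}\phi|_{V}\colon V\to\mathcal{M}^{(d)}$, a chart compatible with the $R$-analytic structure of $G$ in which the transported multiplication and inversion are given by $\underline{F}^{(N)}$ and $\underline{I}^{(N)}$. By the previous paragraph these preserve $\mathcal{M}^{(d)}$, so $V$ (which contains $e$) is an open subgroup of $G$; and $V$, identified with $\mathcal{M}^{(d)}$ via $\psi$ and given the group law $\underline{F}^{(N)}$, is by definition an $R$-standard group of dimension $d$, which proves the theorem. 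I expect the rescaling to be the main obstacle: one must choose a single $N$ clearing all the denominators in $\underline{F}$---which are a priori of unbounded order---and this is exactly where the genuine convergence of $\underline{F}$, rather than merely the formal group law axioms, is needed. Once $\underline{F}^{(N)}$ has coefficients in $R$, the closure of $\mathcal{M}^{(d)}$ under the rescaled operations is a short degree count built on Proposition \ref{pwrseriesid}.
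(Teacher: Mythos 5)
The paper does not prove this statement---it is quoted as 13.20 of \cite{DiDuMaSe}---and measured against the standard proof in that reference your argument is correct and follows essentially the same route: read off multiplication in a chart at the identity, verify the formal group law axioms via the identity theorem for convergent non-archimedean power series, and rescale by $\mathcal{P}^{N}$ so that the law acquires coefficients in $R$ and consequently preserves $\mathcal{M}^{(d)}$, the closure under inversion following from $\underline{I}$ as in Proposition \ref{pwrseriesid}. One small remark: in the framework of \cite{DiDuMaSe} an analytic function is by definition given locally, after an affine substitution $x \mapsto y + \mathcal{P}^{h}x$, by a power series with coefficients in $R$ itself, so the unbounded denominators you identify as the main obstacle do not actually arise there; nevertheless your coefficient bound $\lVert a_{\alpha} \rVert \leq C c^{-N_1 \lvert \alpha \rvert}$ extracted from convergence is a valid way to handle a chart whose series a priori has $\mathbb{K}$-coefficients, which is a legitimate reading in the paper's setting since $(R,\mathcal{M})$ is assumed to be a discrete valuation pro-$p$ domain and $\mathbb{K}$ exists.
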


\begin{propn}[13.24 in \cite{DiDuMaSe}]
For $v,w \in \mathcal{M}^{(d)}$, define: 
\begin{center}
$(v,w)=\underline{B}(v,w)-\underline{B}(w,v)$. 
\end{center}
Then $L(G)=(\mathcal{M}^{(d)},+,(\cdot,\cdot))$ is a $R$-Lie ring. 
That is, $(\cdot,\cdot)$ satisfies the Jacobi identity (and is obviously $R$-bilinear antisymmetric). 
\end{propn}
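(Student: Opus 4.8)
The plan is to extract the Jacobi identity for the bracket $(v,w) = \underline{B}(v,w) - \underline{B}(w,v)$ from the associativity axiom for $\underline{F}$, by comparing homogeneous parts of low degree. The easy points come first: $(\cdot,\cdot)$ is $R$-bilinear and antisymmetric by inspection, using that $\underline{B}$ is bilinear (Proposition \ref{pwrseriesid}), and $(v,w)$ lies in $\mathcal{M}^{(d)}$ whenever $v,w$ do, since each coordinate of $\underline{B}(v,w)$ is an $R$-linear combination of products $v_i w_j \in \mathcal{M}^2 \subseteq \mathcal{M}$. So $L(G)$ is a well-defined $R$-module with an antisymmetric $R$-bilinear bracket, and only the Jacobi identity needs an argument. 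For that, write $\underline{F}(\underline{X},\underline{Y}) = \underline{X} + \underline{Y} + \underline{B}(\underline{X},\underline{Y}) + \underline{C}(\underline{X},\underline{Y}) + (\text{total degree} \geq 4)$, where $\underline{C}$ is the degree-$3$ part of $\underline{O}$; by Proposition \ref{pwrseriesid}, $\underline{C} = \underline{C}' + \underline{C}''$ with $\underline{C}'$ of bidegree $(2,1)$ and $\underline{C}''$ of bidegree $(1,2)$ in $(\underline{X},\underline{Y})$. Substituting into $\underline{F}(\underline{F}(\underline{X},\underline{Y}),\underline{Z}) = \underline{F}(\underline{X},\underline{F}(\underline{Y},\underline{Z}))$ and equating degree-$3$ parts gives
\[ \underline{B}(\underline{B}(\underline{X},\underline{Y}),\underline{Z}) - \underline{B}(\underline{X},\underline{B}(\underline{Y},\underline{Z})) = \underline{C}(\underline{Y},\underline{Z}) + \underline{C}(\underline{X},\underline{Y}+\underline{Z}) - \underline{C}(\underline{X},\underline{Y}) - \underline{C}(\underline{X}+\underline{Y},\underline{Z}). \]

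The next step is to pass to the component multilinear in $\underline{X},\underline{Y},\underline{Z}$ separately. The left-hand side is already multilinear; on the right, $\underline{C}(\underline{X},\underline{Y})$ and $\underline{C}(\underline{Y},\underline{Z})$ contribute nothing (a variable is absent), and tracking bidegrees shows that the multilinear part of $\underline{C}(\underline{X}+\underline{Y},\underline{Z})$ is the cross term
\[ \tilde{\beta}(\underline{X},\underline{Y},\underline{Z}) := \underline{C}'(\underline{X}+\underline{Y},\underline{Z}) - \underline{C}'(\underline{X},\underline{Z}) - \underline{C}'(\underline{Y},\underline{Z}), \]
which is manifestly symmetric under $\underline{X} \leftrightarrow \underline{Y}$, while the multilinear part of $\underline{C}(\underline{X},\underline{Y}+\underline{Z})$ is the cross term $\tilde{\gamma}(\underline{X},\underline{Y},\underline{Z}) := \underline{C}''(\underline{X},\underline{Y}+\underline{Z}) - \underline{C}''(\underline{X},\underline{Y}) - \underline{C}''(\underline{X},\underline{Z})$, symmetric under $\underline{Y} \leftrightarrow \underline{Z}$. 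Setting $\Psi := \tilde{\gamma} - \tilde{\beta}$ yields the trilinear identity $\underline{B}(\underline{B}(a,b),c) - \underline{B}(a,\underline{B}(b,c)) = \Psi(a,b,c)$, valid for all $a,b,c \in \mathcal{M}^{(d)}$ (indeed as a polynomial identity over $R$).

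To finish, expand $((a,b),c) = \underline{B}(\underline{B}(a,b),c) - \underline{B}(\underline{B}(b,a),c) - \underline{B}(c,\underline{B}(a,b)) + \underline{B}(c,\underline{B}(b,a))$ and use the identity above to replace each reversed-association term, e.g.\ $\underline{B}(c,\underline{B}(a,b)) = \underline{B}(\underline{B}(c,a),b) - \Psi(c,a,b)$. Summing over the three cyclic permutations of $(a,b,c)$, the remaining iterated-$\underline{B}$ terms cancel in pairs and one is left precisely with $\sum_{\sigma \in S_3} \mathrm{sgn}(\sigma)\, \Psi(\sigma \cdot (a,b,c))$, the full antisymmetrisation of $\Psi$. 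Since $\Psi = \tilde{\gamma} - \tilde{\beta}$ is a difference of two trilinear forms, one invariant under the transposition of its last two arguments and one under the transposition of its first two, this antisymmetrisation vanishes (pair each $\sigma$ with $\sigma$ precomposed with the relevant transposition, whose signs are opposite), which is exactly the Jacobi identity. I expect the main hurdle to be the bilinear-algebra bookkeeping in the middle step --- correctly isolating the multilinear parts and discarding the genuinely irrelevant terms by bidegree count --- together with the mechanical verification that the iterated-$\underline{B}$ contributions really do cancel in the cyclic sum. An alternative I would keep in reserve is to derive the Jacobi identity from the Hall--Witt identity in $G$: by Proposition \ref{pwrseriesid}(iii) the group commutator agrees with $(\cdot,\cdot)$ up to terms of degree $\geq 3$, and conjugation is trivial to leading order, so taking the degree-$3$ part of the Hall--Witt relation produces the cyclic identity directly; this avoids the explicit power-series manipulation at the cost of invoking Hall--Witt.
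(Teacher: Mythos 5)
Your argument is correct. Note first that the paper itself offers no proof of this proposition: it is quoted verbatim from \cite{DiDuMaSe} (13.24), with the blanket remark that proofs of quoted results are to be found in Chapter 13 of that book, so there is no in-paper argument to compare against. Your proof is the standard one and is complete: extracting the total-degree-$3$ component of the associativity axiom gives $\underline{B}(\underline{B}(\underline{X},\underline{Y}),\underline{Z})-\underline{B}(\underline{X},\underline{B}(\underline{Y},\underline{Z})) = \underline{C}(\underline{Y},\underline{Z})+\underline{C}(\underline{X},\underline{Y}+\underline{Z})-\underline{C}(\underline{X},\underline{Y})-\underline{C}(\underline{X}+\underline{Y},\underline{Z})$, and your bidegree bookkeeping is right: only $\underline{C}''$ (bidegree $(1,2)$) can contribute a multidegree-$(1,1,1)$ term through $\underline{C}(\underline{X},\underline{Y}+\underline{Z})$ and only $\underline{C}'$ (bidegree $(2,1)$) through $\underline{C}(\underline{X}+\underline{Y},\underline{Z})$, yielding an associator $\Psi=\tilde{\gamma}-\tilde{\beta}$ that is a difference of a trilinear form symmetric in its last two slots and one symmetric in its first two. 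The closing step is the standard identity that the cyclic Jacobi sum for the commutator of a bilinear operation equals the full signed $S_3$-antisymmetrisation of its associator, and the antisymmetrisation of anything invariant under some transposition of slots vanishes; I checked the pairing of terms and it comes out exactly as you describe. Two minor points worth making explicit if you write this up: the multihomogeneous decomposition is still valid in the non-commuting power series ring $R[[\underline{X},\underline{Y}]]$ (each word has a well-defined multidegree), and the trilinear identity, being a polynomial identity over $R$, may legitimately be evaluated at elements of $\mathcal{M}^{(d)}$. Your reserve route via the Hall--Witt identity and Proposition \ref{pwrseriesid}(iii) would also work and is closer in spirit to how such facts are sometimes derived from the group structure, but the direct power-series argument you gave is cleaner and self-contained.
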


\begin{rmrk}
For each $n$, $\mathcal{P}^n L(G)$ is a Lie subring of $L(G)$. As a set it is equal to $K_{n+1}$. 
Moreover by Proposition \ref{pwrseriesid}, the additive cosets of $\mathcal{P}^n L(G)$ in $L(G)$ 
are the same as the multiplicative cosets of $K_{n+1}$ in $G$. 
\end{rmrk}

\begin{defn} \label{LieAlgdefn}
The \emph{Lie algebra} of $G$ is $\mathcal{L}_{G} = L(G) \bigotimes_{R} \mathbb{K}$, 
where $\mathbb{K}$ is the field of fractions of $R$. 
\end{defn}

\begin{ex}
\begin{itemize}
\item[(i)] For $G = (\mathcal{M}^{(d)},+)$, 
$\mathcal{L}_G$ is the $d$-dimensional abelian $\mathbb{K}$-Lie algebra. 

\item[(ii)] $\mathcal{L}_{GL_d ^1 (R)} = \mathfrak{gl}_d (\mathbb{K})$. 

\item[(iii)] $\mathcal{L}_{SL_d ^1 (R)} = \mathfrak{sl}_d (\mathbb{K})$. 

\end{itemize}
\end{ex}

\begin{propn} \label{Fabbrack}
Suppose $\mathcal{L}_G$ is perfect. 
There exists $k \in \mathbb{N}$ such that every element of $(\mathcal{M}^k)^{(d)}$ 
is expressible as a sum of at most $d$ brackets in $L_G$. 
\end{propn}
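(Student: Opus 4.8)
The plan is to exploit that $\mathcal{L}_G = L(G) \otimes_R \mathbb{K}$ is perfect and finite-dimensional, together with the fact that the Lie bracket $(\cdot,\cdot)$ on $L(G)$ agrees with the bracket on $\mathcal{L}_G$ after tensoring. Since $\mathcal{L}_G$ is perfect, $(\mathcal{L}_G, \mathcal{L}_G) = \mathcal{L}_G$; choosing a $\mathbb{K}$-basis $e_1, \ldots, e_d$ of $\mathcal{L}_G$ contained (after scaling) in $L(G)$, each $e_j$ may be written as a finite $\mathbb{K}$-linear combination of brackets $(x,y)$ with $x, y \in \mathcal{L}_G$. First I would invoke a standard fact — provable by an easy counting/linear-algebra argument, or by noting that the map $\bigwedge^2 \mathcal{L}_G \to \mathcal{L}_G$ is surjective so its $d$-fold sum-of-images already fills $\mathcal{L}_G$ — that over a field every element of a perfect finite-dimensional Lie algebra is a sum of at most $d$ brackets (indeed the image of the bracket map is a subspace once we allow sums, and perfectness says that subspace is everything). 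So: every $v \in \mathcal{L}_G$ equals $(a_1, b_1) + \cdots + (a_d, b_d)$ for some $a_i, b_i \in \mathcal{L}_G$.

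The work is then to descend this from $\mathcal{L}_G$ back to $L(G) = \mathcal{M}^{(d)}$, at the cost of passing to a deep enough ball $(\mathcal{M}^k)^{(d)}$. The point is that the elements $a_i, b_i \in \mathcal{L}_G$ are only defined up to scaling by powers of $\mathcal{P}$: for a sufficiently large fixed $N$ (depending on $G$, i.e. on how badly the chosen witnesses fail to lie in $L(G)$), we have $\mathcal{P}^N a_i, \mathcal{P}^N b_i \in L(G)$ for all the finitely many witnesses needed to express a basis. Then $(\mathcal{P}^N a_i, \mathcal{P}^N b_i) = \mathcal{P}^{2N}(a_i, b_i) \in L(G)$, so $\mathcal{P}^{2N} v$ is a sum of $\leq d$ brackets in $L(G)$ whenever $v$ lies in the $R$-span of the basis — hence for every $v \in L(G)$. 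Since multiplication by $\mathcal{P}^{2N}$ sends $L(G)$ into $\mathcal{M}^{2N} L(G)$, and conversely the issue is to hit elements of $(\mathcal{M}^k)^{(d)}$: given $w \in (\mathcal{M}^k)^{(d)}$ with $k = 2N$, write $w = \mathcal{P}^{2N} v$ with $v \in L(G) = \mathcal{M}^{(d)}$ (using that $\mathcal{M}$ is principal generated by $\mathcal{P}$, so $\mathcal{M}^{2N} = \mathcal{P}^{2N} R$ and $(\mathcal{M}^{2N})^{(d)} = \mathcal{P}^{2N} \mathcal{M}^{(d)}$... being slightly careful: $(\mathcal{M}^{2N})^{(d)}$ versus $\mathcal{P}^{2N}(\mathcal{M})^{(d)}$ — these coincide since $\mathcal{M}^{2N} = \mathcal{P}^{2N}\mathcal{M}^0$? no, $\mathcal{M}^{2N} = (\mathcal{P}^{2N})$, a principal ideal, and every element of it is $\mathcal{P}^{2N}$ times an element of $R$, while $L(G) = \mathcal{M}^{(d)}$ has entries in $\mathcal{M}$, so I should instead take $k$ large enough that $(\mathcal{M}^k)^{(d)} \subseteq \mathcal{P}^{2N} L(G)$, e.g. $k = 2N + 1$).

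So concretely I would set $k := 2N + 1$ where $N$ is chosen so that a fixed finite set of witnesses expressing an $R$-module generating set of $L(G)$ as $\mathbb{K}$-combinations of brackets all lie in $\mathcal{P}^{-N} L(G)$; then for $w \in (\mathcal{M}^k)^{(d)} \subseteq \mathcal{P}^{2N} L(G)$ write $w = \mathcal{P}^{2N} v$ with $v \in L(G)$, expand $v$ over the generating set, use bilinearity of the bracket to absorb the $R$-coefficients into one argument of each bracket, and conclude $w$ is a sum of at most $d$ brackets of elements of $L(G)$. The main obstacle — really the only subtlety — is the bookkeeping in this last step: ensuring a single $N$ works uniformly (it does, because $L(G)$ is a finitely generated $R$-module, so only finitely many witnesses are needed and one can take the max of finitely many valuations), and keeping the bracket count at exactly $d$ rather than letting it grow when we re-expand $v$ in terms of a generating set — this is handled by $R$-bilinearity, which lets us write $v = \sum_{j=1}^d r_j e_j$ and then $v = \sum_j (r_j a_j', b_j')$ style, folding each $r_j$ into a bracket argument so the number of brackets stays $d$. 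Everything else is routine.
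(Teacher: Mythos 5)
Your final, concrete version of the argument is essentially the paper's proof: choose a $\mathbb{K}$-basis of $\mathcal{L}_G$ consisting of brackets (possible by perfectness), express elements of $L(G)$ in that basis with coefficients of bounded denominator, multiply by a fixed power of $\mathcal{P}$ to clear denominators, and fold the resulting $R$-coefficients into one argument of each bracket by bilinearity so the count stays at $d$. The paper streamlines the bookkeeping by taking the bracket basis to consist of brackets of elements of an $R$-basis of $L(G)$ itself, so that only the coefficients $\lambda_{i,j}$ (not the bracket arguments) need rescaling; just make sure your single exponent is chosen large enough to clear those denominators as well as to bring your witnesses into $L(G)$.
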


\begin{proof}
Let $\lbrace x_1 , \ldots , x_d \rbrace$ be a $R$-basis for $L(G)$. 
Then there exist \linebreak$r_i , s_i \in \lbrace 1 , \ldots , d \rbrace$ 
such that $\lbrace (x_{r_1},x_{s_1}) , \ldots , (x_{r_d},x_{s_d}) \rbrace$ 
is a $\mathbb{K}$-basis for $\mathcal{L}_G$. 
Let $\lambda_{i,j} \in \mathbb{K}$ be such that:
\begin{center}
$x_i = \sum_{j=1} ^d \lambda_{i,j} (x_{r_j},x_{s_j})$. 
\end{center}
Let $k \in \mathbb{N}$ be defined by:
\begin{center}
$\lVert \mathcal{P} \rVert^{-k} 
= max (\lbrace 1 \rbrace \cup \lbrace \lVert \lambda_{i,j} \rVert : 1 \leq i , j \leq d \rbrace)$. 
\end{center}
Then for any $1 \leq i , j \leq d$, $\mathcal{P}^k \lambda_{i,j} \in R$. 
Hence for any $x \in L(G)$, there exist $\mu_1 , \ldots , \mu_d \in R$ such that: 
\begin{center}
$\mathcal{P}^k x 
= \mathcal{P}^k \sum_{i=1} ^d \mu_i x_i 
= \mathcal{P}^k \sum_{i=1} ^d \mu_i \sum_{j=1} ^d \lambda_{i,j} (x_{r_j},x_{s_j})$
$= \sum_{j=1} ^d (\sum_{i=1} ^d \mu_i \mathcal{P}^k \lambda_{i,j} x_{r_j},x_{s_j})$
\end{center}
as required. 
\end{proof}

\begin{proof}[Proof of Theorem \ref{Rstandarddiam}]
We verify the hypotheses of Proposition \ref{SolovayKitaev2}. 
Hypothesis (i) is Proposition \ref{propfiltration} (ii). 
For hypothesis (ii), we take $\epsilon$ arbitrary; $A=d$; 
$M_1 \geq max \lbrace \frac{k}{3} + 1 , 2 \rbrace$; $M_2 = k$, 
where $k$ is as in Proposition \ref{Fabbrack}. 
For $i=1,2,3$ choose $\frac{n}{3} (2+i+ \epsilon) \leq n_i \leq m_i \leq \frac{2n}{3} (2+i)$ 
such that $n_i + m_i = (2+i)n-M_2$ (this is possible by our choice of $M_1 , M_2$). 
\\ \\
Let $g \in K_{(2+i)n}$. 
Let $h \in K_{M_2}$ be such that $g = \mathcal{P}^{n_i + m_i} h$. 
Then there exist $g_1 , \ldots , g_d , h_1 , \ldots , h_d \in G$ such that:
\begin{center}
$h = \sum_{i=1} ^d (g_i , h_i)$
\end{center}
so that: 
\begin{center}
$g = \sum_{i=1} ^d (\mathcal{P}^{n_i} g_i ,\mathcal{P}^{m_i} h_i)$
\\
$\equiv \sum_{i=1} ^d [\mathcal{P}^{n_i} g_i ,\mathcal{P}^{m_i} h_i] \text{ mod } \mathcal{P}^{2n_i+m_i}$ 
(by Proposition \ref{pwrseriesid} (iii))
\\
$\equiv [\mathcal{P}^{n_i} g_1 ,\mathcal{P}^{m_i} h_1] \cdots [\mathcal{P}^{n_i} g_d ,\mathcal{P}^{m_i} h_d] 
\text{ mod } \mathcal{P}^{2n_i+2m_i}$ (by Proposition \ref{pwrseriesid} (i)). 
\end{center}
Since $2n_i+m_i = (2+i)n + n_i - M_2$, we are done. 
\end{proof}

\subsection{FAb $p$-adic Analytic Groups} \label{padicanalyticSection}

In the case of a group $G$ with an analytic structure over $\mathbb{Z}_p$, 
there is an alternative approach to constructing the Lie algebra of $G$, 
based on the concept of a \emph{uniform} subgroup, 
rather than a \emph{$\mathbb{Z}_p$-standard} subgroup. 
We will utilise this approach to complete the proof of Theorem \ref{padicdiam}. 
Let $p \geq 3$ be prime. Let $G$ be a finitely generated pro-$p$ group. 
Let $(G_n)_n$ be the lower central $p$-series of $G$. 

\begin{defn}
$G$ is \emph{powerful} if $G / \overline{G^p}$ is abelian. 
$G$ is \emph{uniform} if it is powerful and torsion-free. 
The \emph{dimension} of a uniform group $G$ is the minimal size of a topological generating set. 
\end{defn}

\begin{ex} \label{standarduniform}
Recall (\cite{Laz}) that every compact $p$-adic analytic group has an open characteristic uniform subgroup. 
Indeed, every $\mathbb{Z}_p$-standard group of dimension $d$ is a uniform pro-$p$ group of dimension $d$ 
(8.31 of \cite{DiDuMaSe}). Conversely, if $G$ is a $d$-dimensional uniform pro-$p$ group, 
then $G_2$ is a $d$-dimensional $\mathbb{Z}_p$-standard group (8.23 (iii) of \cite{DiDuMaSe}). 
In particular, every compact $p$-adic analytic group has an open characteristic $\mathbb{Z}_p$-standard subgroup. 
We describe the formal group law on $G_2$ below. 
\end{ex}

We recall some properties of uniform groups.
Unless otherwise specified, let $G$ be a $d$-dimensional uniform group. 

\begin{thm}[3.6, 4.9 in \cite{DiDuMaSe}] \label{padicfiltgen}
Let $\lbrace a_1 , \ldots , a_d \rbrace$ be a topological generating set for $G$; $n , m \in \mathbb{N}$. 
\begin{itemize}
\item[(i)] $(\lambda_1 , \ldots , \lambda_d) \mapsto a_1 ^{\lambda_1} \cdots a_d ^{\lambda_d}$ 
defines a homeomorphism $\mathbb{Z}_p ^d \rightarrow G$. 
\item[(ii)] $G_{n+1}$ is uniform of dimension $d$. 
\item[(iii)] $(G_{n+1})_{m+1} = G_{m+n+1}$. 
\item[(iv)] $G_{n+1} = \lbrace x^{p^n} : x \in G \rbrace$. 
\item[(v)] $\lbrace a_1 ^{p^n} , \ldots , a_d ^{p^n} \rbrace$ is a topological generating set for $G_{n+1}$
\end{itemize}
\end{thm}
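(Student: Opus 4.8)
The statement collects several foundational facts about uniform pro-$p$ groups, and I would prove them in the order (iv), then (ii)--(iii)--(v), then (i). Throughout, write $G_n$ for the $n$-th term of the lower central $p$-series, so $G_1=G$ and $G_{n+1}=\overline{G_n^p[G_n,G]}$. The technical heart is the behaviour of the $p$-power map in a powerful pro-$p$ group: when $p$ is odd, (a) the set $\{x^p:x\in G_n\}$ is already the subgroup $G_{n+1}=\overline{G_n^p}$; and (b) for each $n$, $x\mapsto x^{p^n}$ induces a surjective homomorphism $G/G_2\twoheadrightarrow G_{n+1}/G_{n+2}$. These are proved together with the filtration estimates $[G_i,G_j]\subseteq G_{i+j}$ as an interlocking induction on the $p$-central series, the inputs being the Hall--Petrescu collection formula and the defining inclusions $G_i^p\subseteq G_{i+1}$, $[G_i,G]\subseteq G_{i+1}$: the point is that on expanding $(xy)^{p^n}$ all correction terms land in the subgroup being quotiented out, so $(xy)^{p^n}\equiv x^{p^n}y^{p^n}$ there. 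Iterating (a) gives $G_{n+1}=\{x^{p^n}:x\in G\}$, which is (iv); this, and the estimates, are the only places the hypothesis $p\geq 3$ is used.

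Given (iv), $G_{n+1}$ is topologically generated by $p^n$-th powers; in particular $[G_{n+1},G_{n+1}]\subseteq G_{2n+2}\subseteq G_{n+2}=\overline{G_{n+1}^p}$, so $G_{n+1}$ is powerful, and being a subgroup of the torsion-free group $G$ it is uniform once its dimension is pinned down. Applying (a) inside $G_{n+1}$ and using the filtration estimates yields $(G_{n+1})_{m+1}=G_{m+n+1}$, which is (iii); in particular $(G_{n+1})_2=G_{n+2}$, so the dimension of $G_{n+1}$ equals $\dim_{\mathbb{F}_p}G_{n+1}/G_{n+2}$. Now the surjection $G/G_2\twoheadrightarrow G_{n+1}/G_{n+2}$ of (b) is in fact a bijection when $G$ is uniform --- equivalently, $G$ powerful with all $|G_i/G_{i+1}|$ equal --- so this dimension equals $\dim_{\mathbb{F}_p}G/G_2=d$, giving (ii); and the images $a_1^{p^n},\ldots,a_d^{p^n}$ of the generators form an $\mathbb{F}_p$-basis of $G_{n+1}/(G_{n+1})_2$, hence topologically generate $G_{n+1}$ by the pro-$p$ Frattini argument, giving (v).

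For (i), the map $\phi\colon(\lambda_1,\ldots,\lambda_d)\mapsto a_1^{\lambda_1}\cdots a_d^{\lambda_d}$ is continuous on the compact space $\mathbb{Z}_p^d$ because each $\lambda\mapsto a_i^\lambda$ is continuous (the closure of $\langle a_i\rangle$ is procyclic, hence a continuous image of $\mathbb{Z}_p$). Surjectivity is a descending induction: if $g=a_1^{\lambda_1}\cdots a_d^{\lambda_d}\cdot h$ with $h\in G_{n+1}$, use (v) to write $h\equiv a_1^{p^n\mu_1}\cdots a_d^{p^n\mu_d}\pmod{G_{n+2}}$, and note that these factors may be merged modulo $G_{n+2}$ since the commutators incurred lie in $[G_{n+1},G]\subseteq G_{n+2}$; the partial exponents then converge $p$-adically, and $g\in\phi(\mathbb{Z}_p^d)$ by continuity and completeness of $G$. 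Injectivity uses the same merging trick together with the \emph{bijectivity} in (b): if $\phi(\lambda)=\phi(\mu)$, reducing mod $G_2$ forces $\lambda_i\equiv\mu_i\pmod p$ (minimality of $\{a_i\}$ makes $G/G_2$ exactly $d$-dimensional), and peeling off successive $p$-adic digits using (b) forces $\lambda_i=\mu_i$ --- the one step where torsion-freeness, not just powerfulness, is essential. A continuous bijection from a compact space to a Hausdorff space is a homeomorphism.

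The main obstacle is the power-map package (a)--(b) together with the filtration estimate $[G_i,G_j]\subseteq G_{i+j}$: proving that in a powerful pro-$p$ group the $p$-th powers form a subgroup and the $p^n$-power map descends to a homomorphism on the $p$-central quotients. This is a genuine commutator-collection argument, and is precisely where ``powerful'' and ``$p\geq 3$'' are used; granting it, the remaining assertions are bookkeeping with the filtration $(G_n)$ plus one compactness argument.
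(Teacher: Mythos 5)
This statement is quoted by the paper directly from \cite{DiDuMaSe} (Theorems 3.6 and 4.9 there) and is given no proof in the paper itself; your outline is a faithful reconstruction of the standard arguments in that reference, correctly isolating the power-map package (the $p$-th powers forming a subgroup, the induced surjections $G/G_2\twoheadrightarrow G_{n+1}/G_{n+2}$, and the estimate $[G_i,G_j]\subseteq G_{i+j}$) as the technical core and correctly locating where $p\geq 3$, powerfulness and torsion-freeness each enter. The only cosmetic remark is that injectivity in (i) is most cleanly finished by counting ($(\mathbb{Z}/p^n\mathbb{Z})^d$ and $G/G_{n+1}$ both have order $p^{nd}$ by uniformity, so the surjection on each finite quotient is a bijection), which avoids the slightly delicate digit-peeling step, but your version works as well.
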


There is a complete normed $\mathbb{Q}_p$-algebra $\hat{A}$, an embedding $G \hookrightarrow \hat{A}^*$ satisfying: 
\begin{center}
$\forall g \in G, g-1 \in \hat{A}_0$, where $\hat{A}_0 = \lbrace x \in \hat{A} : \lVert x \rVert \leq p^{-1} \rbrace$
\end{center}
and mutually inverse analytic functions:
\begin{center}
$log : 1 + \hat{A}_0 \rightarrow \hat{A}_0$, 
\\
$exp : \hat{A}_0 \rightarrow 1 + \hat{A}_0$. 
\end{center}
$\hat{A}$ is naturally a $\mathbb{Q}_p$-Lie algebra with Lie bracket:
\begin{center}
$(x,y)=xy-yx$. 
\end{center}
$log(G)$ is a free $d$-dimensional $\mathbb{Z}_p$-module and a $\mathbb{Z}_p$-Lie subalgebra of $\hat{A}$. 

\begin{lem}[6.25 and 7.12 from \cite{DiDuMaSe}]
Let $x \in \hat{A}_0$, $n \in \mathbb{Z}$. 
\begin{itemize}
\item[(i)] $exp(nx) = exp(x)^n$. 
\item[(ii)] $log((1+x)^n) = n log(1+x)$. 
\item[(iii)] $(log(G),log(G)) \subseteq p log(G)$. 
\end{itemize}
Moreover, for $g \in G$, $\lambda \in \mathbb{Z}_p$, $\lambda log(g) = log(g^{\lambda})$. 
\end{lem}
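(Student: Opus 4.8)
The assertions (i), (ii) and the closing identity are formal properties of the power series $\exp$ and $\log$ in the complete normed $\mathbb{Q}_p$-algebra $\hat{A}$, whereas (iii) is the one carrying real content and is where uniformity of $G$ and the hypothesis $p\geq 3$ are genuinely used. My plan is to read off (i), (ii) and the ``moreover'' clause from the defining series, and then to deduce (iii) from the fact that a uniform group is powerful, using the $\exp$--$\log$ dictionary to pass between the group commutator $[g,h]$ and the Lie bracket $(\log g,\log h)$.

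For (i): since $nx$ commutes with $x$, the identity $\exp(x)\exp(y)=\exp(x+y)$ for commuting $x,y\in\hat{A}_0$ follows by multiplying the two absolutely convergent series and reindexing via the binomial theorem; induction gives $\exp(x)^n=\exp(nx)$ for $n\geq 0$, and $\exp(x)\exp(-x)=\exp(0)=1$ covers $n<0$. Both sides are power series in $x$ with coefficients depending continuously on $n$, so, $\mathbb{Z}$ being dense in $\mathbb{Z}_p$, this extends to $\exp(\lambda x)=\exp(x)^{\lambda}$ for all $\lambda\in\mathbb{Z}_p$; applying $\log$ and putting $x=\log g$ yields $\lambda\log g=\log(g^{\lambda})$, the final clause. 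For (ii): apply $\exp$ to both sides. On the right, $\exp\bigl(n\log(1+x)\bigr)=\bigl(\exp\log(1+x)\bigr)^n=(1+x)^n$ by (i), using that $\exp$ and $\log$ are mutually inverse bijections $\hat{A}_0\leftrightarrow 1+\hat{A}_0$ and that $1+\hat{A}_0$ is a subgroup of $\hat{A}^{\ast}$, so $(1+x)^n$ remains in the domain of $\log$; on the left, $\exp\bigl(\log((1+x)^n)\bigr)=(1+x)^n$. Since $\exp$ is injective, (ii) follows.

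For (iii), put $\mathfrak{g}=\log G$, a $\mathbb{Z}_p$-Lie subring of $\hat{A}$, and let $a=\log g$, $b=\log h$ for $g,h\in G$. As $G$ is uniform it is powerful, so $[g,h]$ lies in $G_2$; since $G_2$ consists of $p$-th powers (Theorem \ref{padicfiltgen}(iv) with $n=1$), write $[g,h]=k^p$, whence $\log[g,h]=p\log k\in p\,\mathfrak{g}$ by the ``moreover'' clause with $\lambda=p$. On the other hand, inside $\hat{A}$ conjugation by $\exp(a)$ is the ring automorphism $e^{\mathrm{ad}\,a}$, so $\log(ghg^{-1})=e^{\mathrm{ad}\,a}(b)$, and the Baker--Campbell--Hausdorff formula gives $\log[g,h]=\mathrm{bch}\bigl(e^{\mathrm{ad}\,a}(b),-b\bigr)=(a,b)+\rho$, where $\rho$ is a norm-convergent sum of iterated Lie brackets of $a$ and $b$ of bracket-length at least $3$, each with a rational coefficient. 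Since $(a,b)$ and $\log[g,h]$ both lie in $\mathfrak{g}$, so does $\rho$; the point is to show in addition that $\rho\in p\,\mathfrak{g}$, for then $(a,b)=\log[g,h]-\rho\in p\,\mathfrak{g}$, which is exactly (iii).

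The crux --- and the main obstacle --- is therefore the refined containment $\rho\in p\,\mathfrak{g}$. Here one estimates the bracket-length-$j$ part $\rho_j$: from $\lVert a\rVert,\lVert b\rVert\leq p^{-1}$ each bracket monomial of length $j$ has norm at most $p^{-j}$, while the denominators of the BCH coefficients cost an extra factor of size at most $p^{j/(p-1)}$, so $\lVert\rho_j\rVert\leq p^{-j(p-2)/(p-1)}$. For $p\geq 3$ this is $\leq p^{-2}$ for every $j\geq 3$ and tends to $0$ as $j\to\infty$ --- it is precisely here that $p=2$ must be excluded, the exponent then failing to decay --- so $\lVert\rho\rVert\leq p^{-2}$; combined with the fact that $\mathfrak{g}=\log G$ sits as a saturated $\mathbb{Z}_p$-lattice in its $\mathbb{Q}_p$-span (an element of the span of norm $\leq p^{-k-1}$ automatically lying in $p^k\mathfrak{g}$), this forces $\rho\in p\,\mathfrak{g}$, and assembling the pieces gives $(\mathfrak{g},\mathfrak{g})\subseteq p\,\mathfrak{g}$. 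The work is thus essentially bookkeeping --- marrying the $p$-adic control of the BCH coefficients (delicate at small $p$) to the lattice structure of $\log G$; in \cite{DiDuMaSe} it is packaged as the statement that $\log G$ is a powerful Lie algebra, but the substance is as above.
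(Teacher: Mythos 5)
The paper does not prove this lemma: it is quoted verbatim from \cite{DiDuMaSe} (6.25 and 7.12), so there is no in-paper argument to compare against. Your reconstruction is essentially the standard proof from that reference: (i), (ii) and the ``moreover'' clause are formal series manipulations plus continuity in the exponent, and (iii) combines the group-theoretic fact $[g,h]\in G_2=\{x^p\}$ (powerfulness plus Theorem \ref{padicfiltgen}(iv)) with the commutator BCH series $\Psi$ of Proposition \ref{seriesops} and a $p$-adic estimate on its higher-order terms. The logic is sound and non-circular: you correctly avoid needing (iii) to control $\rho$ by using only the raw norms $\lVert a\rVert,\lVert b\rVert\le p^{-1}$, and you correctly use the separately quoted fact that $\log G$ is a $\mathbb{Z}_p$-Lie subalgebra to place $\rho=\log[g,h]-(a,b)$ inside $\log G$ before invoking saturation.

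One quantitative slip should be flagged: with your stated denominator bound $p^{j/(p-1)}$ you get $\lVert\rho_j\rVert\le p^{-j(p-2)/(p-1)}$, which for $p=3$, $j=3$ is only $p^{-3/2}$, not $\le p^{-2}$ as you claim. This is repairable in either of two ways: the sharper (and standard) bound on the $p$-adic valuation of the degree-$j$ BCH denominators is $\lfloor (j-1)/(p-1)\rfloor$, giving $\lVert\rho_j\rVert\le p^{-(j+1)/2}\le p^{-2}$ for $j\ge 3$, $p\ge 3$; alternatively, the norm on $\hat{A}$ is $p^{\mathbb{Z}}$-valued, so $\lVert\rho_3\rVert\le p^{-3/2}<p^{-1}$ already forces $\lVert\rho_3\rVert\le p^{-2}$. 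You should also make explicit the saturation fact you invoke, namely $p^{n}\log G=\log G_{n+1}=\{x\in\log G:\lVert x\rVert\le p^{-n-1}\}$, which is exactly Corollary \ref{ballsubgrpcoroll} combined with the identification of the filtration with norm balls; since $\rho$ already lies in $\log G$, you only need this inside $\log G$ and not in its full $\mathbb{Q}_p$-span. With those two repairs the argument is complete.
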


Combining this Lemma with Theorem \ref{padicfiltgen} (iv), we have: 

\begin{coroll} \label{ballsubgrpcoroll}
For all $n \in \mathbb{N}$, $p^n log(G) = log(G_{n+1})$. 
\end{coroll}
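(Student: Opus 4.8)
The plan is to combine the two inputs flagged immediately before the statement: Theorem \ref{padicfiltgen} (iv), which identifies $G_{n+1}$ with the set of $p^n$-th powers in $G$, and the final clause of the cited Lemma (6.25 and 7.12 of \cite{DiDuMaSe}), namely $\log(g^{\lambda}) = \lambda \log(g)$ for $g \in G$ and $\lambda \in \mathbb{Z}_p$. Since $g - 1 \in \hat{A}_0$ for every $g \in G$, the analytic map $\log$ is defined on all of $G$ (and hence on $G_{n+1} \leq G$), and it is injective, being a bijection $1 + \hat{A}_0 \to \hat{A}_0$ with inverse $\exp$; so forming images of subsets under $\log$ causes no trouble.

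First I would write, using Theorem \ref{padicfiltgen} (iv),
\begin{center}
$\log(G_{n+1}) = \log\big( \lbrace x^{p^n} : x \in G \rbrace \big) = \lbrace \log(x^{p^n}) : x \in G \rbrace$.
\end{center}
Then I would apply the identity $\log(g^{\lambda}) = \lambda \log(g)$ with $\lambda = p^n \in \mathbb{Z}_p$ to each term, obtaining
\begin{center}
$\lbrace \log(x^{p^n}) : x \in G \rbrace = \lbrace p^n \log(x) : x \in G \rbrace = p^n \log(G)$.
\end{center}
This already gives the claimed equality of sets. (One could equivalently run the argument through $\exp$ using $\exp(nx) = \exp(x)^n$, but the $\log$-side identity is the most direct route.)

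There is no real obstacle here; the statement is essentially a dictionary translation between the group-theoretic description of the lower central $p$-series filtration (powers) and its image under the logarithm (scalar multiplication by $p^n$). The only points worth a sentence of care are that $\log$ is genuinely defined and injective on $G$ — which is the stated embedding property $G \hookrightarrow 1 + \hat{A}_0$ — and that $G_{n+1}$ is \emph{exactly}, not merely contained in, the set $\lbrace x^{p^n} : x \in G\rbrace$, which is the content of Theorem \ref{padicfiltgen} (iv) and relies on $G$ being uniform. With both of those in hand the chain of equalities above is the complete proof.
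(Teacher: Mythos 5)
Your proof is correct and is exactly the argument the paper intends: the paper gives no separate proof, merely noting that the corollary follows by "combining this Lemma with Theorem \ref{padicfiltgen} (iv)", i.e.\ applying $\log(x^{p^n}) = p^n\log(x)$ to the description $G_{n+1} = \lbrace x^{p^n} : x \in G\rbrace$. Your additional remarks on the injectivity of $\log$ and the exactness of Theorem \ref{padicfiltgen} (iv) are appropriate care, not a deviation.
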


\begin{propn}[6.27 and 6.28 in \cite{DiDuMaSe}] \label{seriesops}
There are formal non-commutative power series $\Phi (X,Y)$, $\Psi (X,Y)$ satisfying:
\begin{center}
$\Phi (X,Y) = X+Y+\frac{1}{2} (XY-YX)+h.o.(X,Y)$
\\
$\Psi (X,Y) = (XY-YX)+h.o.(X,Y)$
\end{center}
(with $h.o.(X,Y)$ denoting terms composed of brackets of length at least three) 
such that, for $x , y \in \hat{A}_0$, 
\begin{itemize}
\item[(i)] $\Phi (x,y)$ converges to $log (exp(x) exp(y))$, 
\item[(ii)] $\Psi (x,y)$ converges to $log (exp(-x) exp(-y) exp(x) exp(y))$. 
\end{itemize}
\end{propn}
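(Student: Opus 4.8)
The plan is to establish both identities first at the formal level, in the completed free associative $\mathbb{Q}$-algebra $\mathcal{A} = \mathbb{Q}\langle\langle X,Y\rangle\rangle$ on two non-commuting indeterminates (graded by total degree and complete for the degree filtration), and then transfer them to $\hat{A}$. In $\mathcal{A}$ the formal series $\exp(T) = \sum_{k\geq 0} T^{k}/k!$ and $\log(1+T) = \sum_{k\geq 1}(-1)^{k+1}T^{k}/k$ converge for $T$ in the augmentation ideal, so one may set $\Phi(X,Y) = \log(\exp(X)\exp(Y))$ and $\Psi(X,Y) = \log(\exp(-X)\exp(-Y)\exp(X)\exp(Y))$; note that $\Psi(X,Y) = \Phi(-X,\Phi(-Y,\Phi(X,Y)))$.

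First I would read off the low-degree terms by brute expansion: $\exp(X)\exp(Y) = 1 + (X+Y) + (\tfrac12 X^{2}+XY+\tfrac12 Y^{2}) + (\text{degree}\geq 3)$, and substituting this into $\log(1+\,\cdot\,)$ gives $\Phi = (X+Y) + \tfrac12(XY-YX) + (\text{degree}\geq 3)$; the analogous computation for the product of four exponentials has vanishing degree-one part and degree-two part $XY-YX$, so $\Psi = (XY-YX) + (\text{degree}\geq 3)$. The substantive point is that every higher term is a $\mathbb{Q}$-combination of iterated brackets. I would deduce this from the Friedrichs criterion: give $\mathcal{A}$ the cocommutative Hopf-algebra structure with $\Delta(X) = X\otimes 1 + 1\otimes X$, $\Delta(Y) = Y\otimes 1 + 1\otimes Y$; then in characteristic zero $\exp$ and $\log$ restrict to mutually inverse bijections between primitive and group-like elements, and the primitive elements are precisely the completed free Lie algebra on $X,Y$. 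Since $\exp(X)$ and $\exp(Y)$ are group-like, so is their product, so $\Phi$ and $\Psi$, being $\log$ of group-like elements, are primitive, hence Lie series. Combining with the degree computation, the ``higher-order'' terms are brackets of length at least three, as asserted.

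It remains to transfer these identities to $\hat{A}$. Fix $x,y\in\hat{A}_{0}$. Because the norm is submultiplicative and non-archimedean with $\|1\|=1$, $\hat{A}_{0}$ is a subring and $1+\hat{A}_{0}$ is closed under multiplication, so $\exp(x)\exp(y)$ and $\exp(-x)\exp(-y)\exp(x)\exp(y)$ lie in $1+\hat{A}_{0}$, and $z_{1} := \log(\exp(x)\exp(y))$, $z_{2} := \log(\exp(-x)\exp(-y)\exp(x)\exp(y))$ are well defined in $\hat{A}_{0}$. Now let $\mathcal{B}\subseteq\mathcal{A}$ be the subring of series whose degree-$n$ coefficient has $p$-adic valuation at least $-n/(p-1)$ up to a sublinear correction; one checks easily that $\mathcal{B}$ is a ring stable under the formal $\exp$ and $\log$, and that it contains $\exp(X),\exp(Y)$ (since $v_{p}(1/k!)\geq -k/(p-1)$), hence $\Phi$ and $\Psi$. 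For $p\geq 3$ --- equivalently $p^{-1} < p^{-1/(p-1)}$ --- and $\|x\|,\|y\|\leq p^{-1}$, a degree-$n$ monomial evaluated at $(x,y)$ has norm $\leq p^{-n}$ while its coefficient has $p$-adic absolute value $\leq p^{\,n/(p-1)+o(n)}$, so the terms have norms $\leq p^{-n(p-2)/(p-1)+o(n)}\to 0$: thus evaluation at $(x,y)$ is a well-defined continuous $\mathbb{Q}$-algebra homomorphism $\mathcal{B}\to\hat{A}$, and it intertwines the formal $\exp,\log$ on $\mathcal{B}$ with the analytic $\exp,\log$ on $\hat{A}_{0}$ and $1+\hat{A}_{0}$ (both being given by the same series). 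Applying this homomorphism to the formal identities $\Phi = \log(\exp(X)\exp(Y))$ and $\Psi = \log(\exp(-X)\exp(-Y)\exp(X)\exp(Y))$ shows that $\Phi(x,y)$ converges to $z_{1}$ and $\Psi(x,y)$ to $z_{2}$, as required.

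I expect the main obstacle to be the Lie-series property (the Baker--Campbell--Hausdorff--Dynkin theorem), i.e.\ verifying that the higher-order terms genuinely collapse to iterated brackets; the remaining ingredients are a finite low-degree calculation and routine non-archimedean estimates. The only point at which the hypothesis $p\geq 3$ enters is the convergence bound in the last paragraph, where it is exactly the condition that renders the factorial denominators in the $\exp$ and $\log$ series harmless on the ball $\hat{A}_{0}$.
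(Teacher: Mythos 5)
This proposition is quoted verbatim from [DDMS, 6.27--6.28] and the paper supplies no proof of its own; your sketch follows the same standard route as that source --- the formal Campbell--Hausdorff identity established via the Friedrichs primitivity criterion in $\mathbb{Q}\langle\langle X,Y\rangle\rangle$, followed by non-archimedean convergence estimates on $\hat{A}_0$ --- and is correct. The one step you compress that carries genuine content is the stability of your ring $\mathcal{B}$ under the formal $\exp$ and $\log$, i.e.\ the coefficient bound $v_p(c_w) \geq -(n-1)/(p-1)$ for the degree-$n$ part of the Hausdorff series; the precise constant there is exactly what makes the borderline case $p=3$ converge, so it cannot be left entirely to ``one checks easily.''
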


\begin{rmrk} \label{uniformgivesstandard}
Let $x_1 , \ldots , x_d \in log(G)$ be a $\mathbb{Z}_p$-basis for $log(G)$. 
Identify $\mathbb{Z}_p^{(d)}$ with $log(G)$ via:
\begin{center}
$\theta : (\alpha_i)_{i=1} ^d \mapsto \sum_{i=1} ^d \alpha_i x_i$. 
\end{center}
Then, identifying $\mathbb{Z}_p^{(d)}$ with $G$ via $exp \circ \theta$, 
multiplication in $G$ corresponds to the formal group law: 
\begin{center}
$(\underline{a},\underline{b}) \mapsto \theta^{-1} (\Phi(\theta(\underline{a}),\theta(\underline{b})))$
\end{center}
on $\mathbb{Z}_p ^{(d)}$. Moreover, under this identification the subgroup $G_{n+1}$ 
corresponds to $p^n log(G) = \theta ((p^n \mathbb{Z}_p)^{(d)})$, by Corollary \ref{ballsubgrpcoroll}. 
In particular, $G_2 \cong (p \mathbb{Z}_p)^{(d)}$ is a $\mathbb{Z}_p$-standard subgroup. 
\end{rmrk}

\begin{propn}[4.8 and 4.31 in \cite{DiDuMaSe}] \label{dimsgrpsalgs}
Let $H$ be a uniform closed subgroup of $G$; $N \vartriangleleft G$ be closed such that $G/N$ is uniform. 
\begin{itemize}
\item[(i)] $log(H)$ is a $\mathbb{Z}_p$-subalgebra of $log(G)$. 
\item[(ii)] $N$ is uniform, with $dim(N)=dim(G)-dim(G/N)$. 
\item[(iii)] $log(N)$ is an ideal in $log(G)$, and $log(G/N) \cong log(G)/log(N)$. 
\end{itemize}
\end{propn}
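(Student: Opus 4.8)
The plan is to run the $\exp$–$\log$ correspondence of the preceding pages as a dictionary: each of (i)–(iii) asserts that $\log$ intertwines a group operation (restriction to a uniform subgroup; passage to a uniform quotient; conjugation) with the corresponding Lie-algebra operation. Three facts do all the work. First, $\log$ and $\exp$ are mutually inverse power-series maps, so $\log$ commutes with conjugation, $\exp(\lambda x)=\exp(x)^{\lambda}$ and $\log(g^{\lambda})=\lambda\log(g)$ for $\lambda\in\mathbb{Z}_{p}$, and $\log\colon G\to\log(G)$ is a bijection, so $G=\exp(\log(G))$. Second, the normal forms $\Phi(x,y)\equiv x+y$ and $\Psi(x,y)\equiv(x,y)$ modulo brackets of length $\geq3$ (Proposition \ref{seriesops}). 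Third, $p^{n}\log(G)=\log(G_{n+1})$ (Corollary \ref{ballsubgrpcoroll}) together with $(\log(G),\log(G))\subseteq p\log(G)$; this is what makes the tails of the various power series that appear converge inside the lattices we wish to stay in.

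For (i): $\log(H)\subseteq\log(G)$ is automatic; $\log(H)$ is $\mathbb{Z}_{p}$-scalar-stable because $\lambda\log(h)=\log(h^{\lambda})$ with $h^{\lambda}\in H$ ($H$ closed), and bracket-closed because $(\log(H),\log(H))\subseteq p\log(H)$ for the uniform group $H$. The only nontrivial point is additive closure: given $x=\log(g)$, $y=\log(h)$ with $g,h\in H$, the element $g^{p^{n}}h^{p^{n}}$ lies in $H_{n+1}$ and, the $p^{n}$-power map being a bijection $H\to H_{n+1}$ for the uniform group $H$, has a unique $p^{n}$-th root $w_{n}\in H$; then $p^{n}\log(w_{n})=\log(w_{n}^{p^{n}})=\Phi(p^{n}x,p^{n}y)$, so $\log(w_{n})=p^{-n}\Phi(p^{n}x,p^{n}y)\to x+y$, and compactness of $H$ with continuity of $\log$ puts $x+y$ in $\log(H)$. (This is the only place where uniformity of $H$, rather than mere closedness, is used.)

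For (ii) and (iii) I would work with $\mathfrak{n}:=\{x\in\log(G):\exp(x)\in N\}=\log(N)$. One checks $\mathfrak{n}$ is $\mathbb{Z}_{p}$-scalar-stable (as in (i)) and closed under $\Phi$ and $\Psi$ (e.g.\ $\Phi(x,y)=\log(\exp x\exp y)\in\log(N)$ since $N$ is a subgroup); a bootstrap down the filtration $p^{n}\log(G)=\log(G_{n+1})$, using $\Phi\equiv{+}$ and $\Psi\equiv(\cdot,\cdot)$ modulo deeper terms, upgrades this to: $\mathfrak{n}$ is a $\mathbb{Z}_{p}$-Lie sublattice of $\log(G)$. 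Next, $\mathfrak{n}$ is \emph{pure} in $\log(G)$: if $w\in\log(G)$ and $pw\in\mathfrak{n}$, then $\exp(w)^{p}=\exp(pw)\in N$, so $\exp(w)N$ is a $p$-torsion element of the torsion-free group $G/N$, forcing $\exp(w)\in N$, i.e.\ $w\in\mathfrak{n}$. Hence $\log(N)$ is a direct summand of the rank-$d$ free module $\log(G)$; put $e:=\mathrm{rank}(\log(G)/\log(N))$, so $\mathrm{rank}\,\log(N)=d-e$. Purity also gives $(\log(N),\log(N))\subseteq p\log(G)\cap\log(N)=p\log(N)$, whence $[n_{1},n_{2}]=\exp(\Psi(\log n_{1},\log n_{2}))\in\exp(p\log(N))\subseteq\overline{N^{p}}$, so $N$ is powerful; being torsion-free it is uniform, of dimension $\mathrm{rank}\,\log(N)=d-e$. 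For the ideal property: $N\vartriangleleft G$ and $\log$ commutes with conjugation give that $\log(N)$ is $\mathrm{Ad}(G)$-stable; applying this with $g=a^{p^{m}}=\exp(p^{m}\log a)$ for $a$ in a minimal generating set of $G$ and using $(\log(G),\log(G))\subseteq p\log(G)$, one finds $a^{p^{m}}x a^{-p^{m}}-x=p^{m}z_{m}$ with $z_{m}\in\log(G)$ and $z_{m}\to(\log a,x)$; the left side lies in $\log(N)$, so by purity $z_{m}\in\log(N)$, hence $(\log a,x)\in\log(N)$, and therefore $(y',x)\in\log(N)$ for all $y'\in\log(G)$ since the logarithms $\log a$ span $\log(G)$ over $\mathbb{Z}_{p}$ and $\log(N)$ is a submodule. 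Finally, $x\mapsto\exp(x)N$ is a surjective homomorphism $(\log(G),\Phi)\twoheadrightarrow G/N$ with kernel $\log(N)$; since $\log(N)$ is an ideal, $\log(G)/\log(N)$ carries a $\mathbb{Z}_{p}$-Lie-ring structure, the identification respects the canonical filtrations ($G_{n+1}N/N\leftrightarrow p^{n}\log(G)/\log(N)$), so $G/N$ is the uniform group attached to $\log(G)/\log(N)$ and $\log(G/N)\cong\log(G)/\log(N)$; therefore $\dim(G/N)=\mathrm{rank}(\log(G)/\log(N))=e$ and $\dim(N)=d-e=\dim(G)-\dim(G/N)$.

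I expect the real obstacle to be the convergence bookkeeping hidden in ``bootstrap down the filtration'' and in the $p^{n}$-th-root and $p^{m}$-scaling limits: each step trades a group identity for an infinite Lie-algebra series ($\Phi$, $\mathrm{Ad}=\exp\circ\,\mathrm{ad}$, the root limits), and one must check in each case that the tail stays in the lattice at hand. None of this is deep, but it is where the hypotheses get spent: $(\log(G),\log(G))\subseteq p\log(G)$ and completeness of $\hat{A}$ make the tails small, $G=\exp(\log(G))$ and bijectivity of $p^{n}$-power maps on uniform groups make the limits exist inside the group, and torsion-freeness of $G/N$ is precisely what upgrades ``$pw\in\log(N)$'' to ``$w\in\log(N)$''.
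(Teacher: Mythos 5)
This proposition is quoted in the paper as a known background result (4.8 and 4.31 of \cite{DiDuMaSe}) and is given no proof there, so there is nothing internal to compare against; what you have written is essentially a reconstruction of the standard argument from that reference, and it is correct in outline. The load-bearing ideas are all present and deployed in the right places: the limit formulas $x+y=\lim_n p^{-n}\Phi(p^nx,p^ny)$ and $(x,y)=\lim_n p^{-2n}\Psi(p^nx,p^ny)$ together with bijectivity of the $p^n$-power map on a uniform group give closure of $\log(H)$ and $\log(N)$ under the linear operations; torsion-freeness of $G/N$ is exactly what yields purity of $\log(N)$ in $\log(G)$, which in turn is what lets you divide by $p$ inside $\log(N)$ at every stage (powerfulness of $N$, the $\mathrm{Ad}$-limit for the ideal property, and the rank count). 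Two small points you should make explicit if you write this out in full: first, to conclude that $N$ is uniform you also need it to be topologically finitely generated, which is automatic since $N$ is closed in the compact $p$-adic analytic group $G$ but deserves a sentence; second, in the $\mathrm{Ad}$-limit step the tail of $\exp(\mathrm{ad}(p^m\log a))(x)-x-p^m(\log a,x)$ involves the denominators $k!$, so the estimate placing it in $p^{2m}\log(G)$ uses $p\geq 3$ (which the paper does assume in this subsection) — this is precisely the ``convergence bookkeeping'' you flag, and it is where the hypothesis on $p$ gets spent. With those caveats the proposal is a sound, self-contained proof of the cited statement.
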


\begin{propn}[7.15 in \cite{DiDuMaSe}] \label{dimsalgsgrps}
Let $S$ be a $\mathbb{Z}_p$-Lie subalgebra of $log(G)$ 
such that the $\mathbb{Z}_p$-module $log(G)/S$ is torsion-free. 
\begin{itemize}
\item[(i)] $exp(S)$ is a closed uniform subgroup of $G$. 
\item[(ii)] If $S$ is an ideal of $log(G)$, then $exp(S) \vartriangleleft G$ and $G/exp(S)$ is uniform. 
\end{itemize}
\end{propn}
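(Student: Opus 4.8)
The plan is to use the $exp$/$log$ correspondence between $G$ and $log(G)$, transferring the Lie structure of $S$ into a group structure on $exp(S)$ by means of the Baker--Campbell--Hausdorff-type series $\Phi$ and $\Psi$ of Proposition \ref{seriesops}; the only facts needed are that these series are assembled entirely from iterated brackets, together with two $p$-divisibility properties of $S$. First I would record the structure of $S$: since $log(G)$ is $\mathbb{Z}_p$-free of rank $d$ and $log(G)/S$ is torsion-free, hence $\mathbb{Z}_p$-free, $S$ is a $\mathbb{Z}_p$-module direct summand of $log(G)$, so $S$ is $\mathbb{Z}_p$-free of some finite rank $e$, is closed and compact in $\hat{A}$, and satisfies $p\,log(G) \cap S = pS$. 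Combining the last point with $(log(G),log(G)) \subseteq p\,log(G)$ and the fact that $S$ is a Lie subalgebra gives $(S,S) \subseteq pS$, whence by induction every iterated bracket of length $k$ in elements of $S$ lies in $p^{k-1}S$.

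For part (i): since $S \subseteq log(G) \subseteq \hat{A}_0$, Proposition \ref{seriesops} applies and gives $exp(x)\,exp(y) = exp(\Phi(x,y))$ and $[exp(x),exp(y)] = exp(\Psi(x,y))$ for $x,y \in S$. I would check that every truncation of $\Phi(x,y)$ lies in $S$: its length-$k$ homogeneous component is a rational multiple of length-$k$ brackets in $x$ and $y$, these brackets lie in $p^{k-1}S$, and the $p$-adic valuation of the BCH coefficient is bounded below by $-(k-1)/(p-1)$, so for $p \geq 3$ the component still lies in $S$; as $S$ is closed, $\Phi(x,y) \in S$. Together with $exp(x)^{-1} = exp(-x)$ this makes $exp(S)$ a subgroup, closed because it is the continuous image of the compact set $S$, and torsion-free because $G$ is. For powerfulness, $\Psi(x,y)$ lies in $(S,S)$ plus brackets of length $\geq 3$, hence in $pS$; so every commutator lies in $exp(pS)$, and $exp(pS) \subseteq \overline{exp(S)^p}$ via $exp(px) = exp(x)^p$ (with $pS$ exponentiating to a subgroup by the same argument as for $S$). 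Thus $exp(S)$ is uniform, of dimension $e$, since $S/pS$ is its Frattini quotient.

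For part (ii): assume now that $S$ is an ideal of $log(G)$. For normality, write a given $g \in G$ as $exp(y)$ with $y \in log(G)$; then $g\,exp(x)\,g^{-1} = exp\big(\Phi(\Phi(y,x),-y)\big)$, and since $S$ is an ideal every surviving bracket in this expansion has an entry in $S$ (iterated brackets in $y$ alone vanish), so the same closedness argument yields $\Phi(\Phi(y,x),-y) \in S$; hence $exp(S) \vartriangleleft G$. That $G/exp(S)$ is uniform I would verify directly: it is finitely generated; it is powerful because the inclusion $\overline{[G,G]} \subseteq \overline{G^p}$ passes to quotients; and it is torsion-free because $exp(u)^p = exp(pu) \in exp(S)$ forces $pu \in S$, hence $u \in S$ by torsion-freeness of $log(G)/S$. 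A Frattini-quotient computation — using $\overline{G^p} = exp(p\,log(G))$, which follows from $G_2 = \{x^p : x \in G\}$ (Theorem \ref{padicfiltgen}(iv)) and Corollary \ref{ballsubgrpcoroll} — then gives $dim(G/exp(S)) = d-e = dim(G) - dim(exp(S))$; alternatively, choosing a $\mathbb{Z}_p$-basis of $log(G)$ extending one of $S$ and passing to the induced formal group law as in Remark \ref{uniformgivesstandard} realises $G/exp(S)$ as $\mathbb{Z}_p^{(d-e)}$ with a $\mathbb{Z}_p$-standard, hence uniform, group law.

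I expect the main obstacle to be the bookkeeping in the two middle paragraphs: making the $p$-adic estimates on the BCH coefficients precise enough to be sure that $\Phi$ — and the nested conjugation series used in (ii) — genuinely land in the closed submodule $S$, which is exactly where $p \geq 3$ is needed; and, for the second half of (ii), checking that $G/exp(S)$ is uniform without circular appeal to the statement being proved, bearing in mind that Proposition \ref{dimsgrpsalgs} already presupposes the quotient to be uniform.
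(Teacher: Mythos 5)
The paper does not prove this proposition at all: it is quoted as a black box (Proposition 7.15 of Dixon--du Sautoy--Mann--Segal), so there is no in-paper argument to compare against. Your proof is essentially the standard one from that source --- the identity $p\log(G)\cap S = pS$ from torsion-freeness gives $(S,S)\subseteq pS$, the valuation bound on the Baker--Campbell--Hausdorff coefficients then forces $\Phi$, $\Psi$ and the conjugation series to converge inside the closed lattice $S$ (this is exactly where $p\geq 3$ enters), and torsion-freeness of $G/\exp(S)$ follows from $\exp(pu)\in\exp(S)\Rightarrow u\in S$ --- and I see no gap beyond the routine bookkeeping you already flag.
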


Define $\mathcal{L}_G = span_{\mathbb{Q}_p} (log(G))$, a $d$-dimensional $\mathbb{Q}_p$-Lie algebra. 
By Remark \ref{uniformgivesstandard}, this is isomorphic to the Lie algebra described in Definition \ref{LieAlgdefn}. 

\begin{propn} \label{Fabalg}
The following are equivalent:
\begin{itemize}
\item[(i)] $G$ has finite abelianisation. 
\item[(ii)] $G$ is FAb. 
\item[(iii)] $\mathcal{L}_G$ is perfect. 
\end{itemize}
\end{propn}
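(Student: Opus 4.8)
The plan is to prove the cycle of implications (i) $\Rightarrow$ (iii) $\Rightarrow$ (ii) $\Rightarrow$ (i), exploiting the dictionary between uniform subgroups of $G$ and $\mathbb{Z}_p$-Lie subalgebras of $\log(G)$ provided by Propositions \ref{dimsgrpsalgs} and \ref{dimsalgsgrps}, together with the dimension count in Proposition \ref{dimsgrpsalgs}(ii). The implication (ii) $\Rightarrow$ (i) is trivial, since $G$ is an open subgroup of itself. So the content is in the first two implications.

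For (i) $\Rightarrow$ (iii): suppose $\mathcal{L}_G$ is not perfect, so that the derived subalgebra $\mathcal{L}_G' = (\mathcal{L}_G, \mathcal{L}_G)$ is a proper ideal; then $\mathcal{L}_G / \mathcal{L}_G'$ is a nonzero abelian $\mathbb{Q}_p$-Lie algebra. Set $S = \log(G) \cap \mathcal{L}_G'$, a Lie subalgebra of $\log(G)$; I would check that $S$ is an ideal and that $\log(G)/S$ embeds as a full-rank submodule of $\mathcal{L}_G/\mathcal{L}_G'$, hence is a nonzero free $\mathbb{Z}_p$-module — in particular torsion-free and of positive rank. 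By Proposition \ref{dimsalgsgrps}(ii), $N := \exp(S) \vartriangleleft G$ with $G/N$ uniform, and by Proposition \ref{dimsgrpsalgs}(ii) the dimension of $G/N$ equals $\dim_{\mathbb{Q}_p}(\mathcal{L}_G/\mathcal{L}_G') > 0$; moreover $G/N$ is abelian, since $\log(G/N) \cong \log(G)/\log(N)$ has trivial bracket modulo its own structure (the bracket on $\log(G)$ lands in $\mathcal{L}_G' \cap \log(G) \subseteq S = \log(N)$, using also $(\log G, \log G)\subseteq p\log G \subseteq \log G$). A positive-dimensional uniform group is torsion-free and infinite, so $G$ surjects onto an infinite abelian group, contradicting (i).

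For (iii) $\Rightarrow$ (ii): let $H \leq G$ be any open subgroup; I must show $H^{\mathrm{ab}}$ is finite. Passing to an open characteristic uniform subgroup of $H$ (which exists by Example \ref{standarduniform}) and noting that this does not change the situation up to finite index, I may assume $H$ itself is uniform, of some dimension $e$. By Proposition \ref{dimsgrpsalgs}(i), $\log(H)$ is a $\mathbb{Z}_p$-subalgebra of $\log(G)$ of $\mathbb{Z}_p$-rank $e$, so $\mathcal{L}_H := \mathrm{span}_{\mathbb{Q}_p}(\log(H))$ is a $\mathbb{Q}_p$-Lie subalgebra of $\mathcal{L}_G$ of dimension $e$. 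Now $\mathcal{L}_G$ perfect does \emph{not} immediately force $\mathcal{L}_H$ perfect, so the argument is instead: the abelianisation $H/\overline{[H,H]}$ is itself a uniform (hence $p$-adic analytic) group once we pass to a further open subgroup, and its Lie algebra is the abelian Lie algebra $\mathcal{L}_H/\mathcal{L}_H'$; it is finite if and only if its dimension is $0$, i.e. if and only if $\mathcal{L}_H$ is perfect. So it suffices to show: an open subalgebra of a perfect finite-dimensional $\mathbb{Q}_p$-Lie algebra — here "open" meaning it spans after tensoring, equivalently it is a full finite-index-in-the-lattice subalgebra — is perfect. But $\mathcal{L}_H$ need not span $\mathcal{L}_G$; the correct formulation is that an open \emph{subgroup} of a $p$-adic analytic group has the \emph{same} Lie algebra, so in fact $\mathcal{L}_H = \mathcal{L}_G$ and is therefore perfect, whence $\mathcal{L}_H/\mathcal{L}_H' = 0$ and $H^{\mathrm{ab}}$ is finite.

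The main obstacle I anticipate is the bookkeeping around torsion-freeness of the relevant quotient modules, which is exactly what is needed to invoke Propositions \ref{dimsalgsgrps} and \ref{dimsgrpsalgs}: one must pass from an arbitrary ideal like $\mathcal{L}_G'$ to a \emph{saturated} sublattice $S = \log(G) \cap \mathcal{L}_G'$ of $\log(G)$ so that $\log(G)/S$ is torsion-free, and then verify that this $S$ is still a Lie ideal and still has the correct rank. The other point requiring care is the precise sense in which "an open subgroup has the same Lie algebra" — this follows because any open subgroup contains an open subgroup of the form $G_{n+1}$ (Theorem \ref{padicfiltgen}), and $\log(G_{n+1}) = p^n \log(G)$ spans the same $\mathbb{Q}_p$-vector space as $\log(G)$ — so I would state this observation explicitly before using it.
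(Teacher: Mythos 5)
Your proposal is correct and follows essentially the same route as the paper: the trivial implication (ii) $\Rightarrow$ (i), plus the dictionary between open uniform subgroups / uniform quotients of $G$ and saturated subalgebras / ideals of $\log(G)$ from Propositions \ref{dimsgrpsalgs} and \ref{dimsalgsgrps}, with the key observations that $\log(G)\cap\mathcal{I}$ is a saturated ideal for an ideal $\mathcal{I}\leq\mathcal{L}_G$ and that $\mathcal{L}_H=\mathcal{L}_G$ for $H\leq_o G$. The only cosmetic differences are that the paper uses a codimension-one ideal containing $\mathcal{L}_G'$ (so the resulting quotient is automatically $\mathbb{Z}_p$, with no need to check it is abelian) and phrases both nontrivial implications as contrapositives.
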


\begin{proof}
$(ii) \Rightarrow (i)$ is clear. 
\\ \\
For $(iii) \Rightarrow (ii)$, suppose $H \leq_o G$ is such that  $\exists \phi :H \twoheadrightarrow \mathbb{Z}_p$. We may suppose that $H = G^{p^n}$ for some $n \in \mathbb{N}$. For if $h \in H$ is such that $\mathbb{Z}_p = \overline{\langle \phi(h) \rangle}$, and $n \in \mathbb{N}$ is such that $G^{p^n} \leq H$, then $h^{p^n} \in G^{p^n}$, and $p^n \mathbb{Z}_p = \overline{\langle \phi(h^{p^n}) \rangle} \leq \phi(G^{p^n}) \leq \mathbb{Z}_p$ , so $\phi(G^{p^n}) \leq_o \mathbb{Z}_p$, and $\phi(G^{p^n}) \cong \mathbb{Z}_p$. 
\\ \\
Now let $N = ker (\phi)$, so that by Proposition \ref{dimsgrpsalgs} (ii), 
$N \vartriangleleft_c H$ is uniform of dimension $d-1$; 
$log(H)=p^n log(G)$ and $log(H)/log(N) \cong \mathbb{Z}_p$. 
\\ \\
Hence $\mathcal{L}_H = \mathcal{L}_G$ so $\mathcal{L}_G / \mathcal{L}_N \cong \mathbb{Q}_p$, 
and $\mathcal{L}_G$ is not perfect. 
\\ \\
For $(i) \Rightarrow (iii)$, suppose $\mathcal{I} \vartriangleleft \mathcal{L}_G$, with $dim(\mathcal{I}) = d - 1$. 
Let \linebreak$I = log(G) \cap \mathcal{I} \vartriangleleft log(G)$ (so that $\mathcal{I} = span_{\mathbb{Q}_p}(I)$). 
Let $v \in log(G)$, 
and suppose $\exists \lambda \in \mathbb{Z}_p \setminus \lbrace 0 \rbrace$ such that $\lambda v \in I$. 
Then $v = \lambda^{-1} (\lambda v) \in \mathcal{I}$, 
so $v \in \mathcal{I} \cap log(G) = I$. 
Thus $log(G) / I$ is torsion-free, so by Proposition \ref{dimsalgsgrps}, $exp(I) \vartriangleleft G$ is uniform and $G/exp(I)$ is uniform, with:
\begin{center}
 $dim(G/exp(I)) = dim(G) - dim(exp(I))$ $= rk(log(G)) - rk(I) = dim(\mathcal{L}_G) - dim(\mathcal{I}) = 1$. 
\end{center}
and a $1$-dimensional uniform group is by definition infinite procyclic, so $G/exp(I) \cong \mathbb{Z}_p$. 
\end{proof}

\begin{proof}[Proof of Theorem \ref{padicdiam}] 
First suppose that $G$ is a FAb compact $p$-adic analytic group. 
As noted in Example \ref{standarduniform}, $G$ has an open characteristic uniform subgroup $H$. 
By Remark \ref{uniformgivesstandard}, $H_2$ is $\mathbb{Z}_p$-standard. 
Let $K_n \vartriangleleft_o H_2$ be as in Theorem \ref{Rstandarddiam}. 
Then by Remark \ref{uniformgivesstandard} and Theorem \ref{padicfiltgen} (iii), 
\begin{center}
$K_n = (H_2)_{n} = H_{n+1}$ 
\end{center}
and $H_{n+1}$ is a characteristic subgroup of $H$. In particular, $K_n \vartriangleleft_o G$. 
As in the proof of Theorem \ref{Rstandarddiam}, 
$(K_n)_n$ satisfies the hypotheses of Proposition \ref{SolovayKitaev2} and the result follows. 
\\ \\
Now suppose that $G$ is uniform and not FAb. By Proposition \ref{Fabalg}, \linebreak
$\exists \phi : G \twoheadrightarrow \mathbb{Z}_p$. 
By Proposition \ref{dimsgrpsalgs}, $N = ker (\phi)$ is uniform of dimension $d-1$. 
We may therefore choose a generating set $S = \lbrace a_1 , \ldots , a_d \rbrace$ for $G$ 
such that $\lbrace a_1 , \ldots , a_{d-1} \rbrace$ is a generating set for $N$ 
and $\overline{ \langle \phi (a_d) \rangle } = \mathbb{Z}_p$. 
\\ \\
Let $\pi_n : \mathbb{Z}_p \twoheadrightarrow \mathbb{Z} / p^n \mathbb{Z}$ 
be the natural projection. Then $G_{n+1} \subseteq ker (\pi_n \circ \phi)$, so:
\begin{center}
$diam(G / G_{n+1} , S) \geq diam(\mathbb{Z} / p^n \mathbb{Z},\lbrace(\pi_n \circ \phi)(a_d)\rbrace) \geq C p^n 
= C \lvert G / G_{n+1} \rvert^{\frac{1}{d}}$
\end{center}
In particular $diam(G / G_{n+1} , S)$ is not polylogarithmic in $\lvert G / G_{n+1} \rvert$. 
\end{proof}

\subsection{Exceptional Groups over $R$}

With Theorem \ref{Rstandarddiam} in hand we may complete the proof of Theorem \ref{chevdiam}. 
We start by marshalling some facts about Chevalley groups. 
Unless otherwise stated, proofs of assertions left unproven in this section may be found in \cite{Carter}. 
\\ \\
Let $\Phi$ be a root system of type $X_l \in \lbrace A_l,B_l,C_l,D_l,E_6,E_7,E_8,F_4,G_2 \rbrace$, 
$\Pi \subseteq \Phi$ be a fundamental system of roots, 
and $S$ be a commutative unital ring. We define the \emph{universal Chevalley group of type $X_l$ over $S$} 
to be the group $\mathcal{G}_S (X_l)$ abstractly generated by the symbols 
$\lbrace x_{\alpha} (t) \rbrace_{\alpha \in \Phi ; t \in S}$, 
subject to the \emph{Steinberg relations}. 
These are described in detail in \cite{Carter}; 
the only fact we require about them is: 
\begin{itemize}
\item[(a)] If $S'$ is a subring of $S$, 
then the inclusion of $\lbrace x_{\alpha} (t) \rbrace_{\alpha \in \Phi ; t \in S'}$ 
into \linebreak$\lbrace x_{\alpha} (t) \rbrace_{\alpha \in \Phi ; t \in S}$ 
induces a homomorphism $\psi : \mathcal{G}_{S'} (X_l) \rightarrow \mathcal{G}_S (X_l)$ 
(in other words, for each $\Phi$, every Steinberg relation over $S'$ is also a Steinberg relation over $S$). 
\end{itemize}
We define, for each $\alpha \in \Phi , s \in S^*$, the element:
\begin{center}
$c_{\alpha}(s)=x_{\alpha}(s)x_{-\alpha}(-s^{-1})x_{\alpha}(s)(x_{\alpha}(1)x_{-\alpha}(-1)x_{\alpha} (1))^{-1}$. 
\end{center}
Trivially,
\begin{itemize}
\item[(b)] $c_{\alpha}(1) = e$. 
\end{itemize}

\begin{thm}[Exercise 13.11 in \cite{DiDuMaSe}] \label{univchev}
Let $(R,\mathcal{M})$ be a pro-$p$ domain. For each $n \geq 1$, 
let $G_n \leq \mathcal{G}_{R} (X_l)$ be the subgroup generated by the set:
\begin{center}
$\lbrace x_{\alpha}(t) \rbrace_{\alpha \in \Phi ; t \in \mathcal{M}^n} 
\cup \lbrace c_{\alpha}(1+s) \rbrace_{\alpha \in \Phi ; s \in \mathcal{M}^n}$. 
\end{center}
Then:
\begin{itemize}
\item[(i)] $G_n \vartriangleleft_f \mathcal{G}_{R} (X_l)$, for all $n \geq 1$. 

\item[(ii)] The map $\theta_n : (\mathcal{M}^n)^{(\lvert \Phi \rvert+\lvert \Pi \rvert)} \rightarrow G_n$, given by: 
\begin{center}
$\theta (\underline{t}) = (\prod_{\alpha \in \Phi^+} x_{\alpha}(t_{\alpha})) 
(\prod_{\alpha \in \Pi} c_{\alpha} (1+t_{\alpha})) 
(\prod_{\alpha \in \Phi^-} x_{\alpha}(t_{\alpha}))$
\end{center}
(with the products ordered by the height function induced on $\Phi$ by $\Pi$) is a bijection, for every $n \geq 1$. 
Identifying $G_1$ with $\mathcal{M}^{(\lvert \Phi \rvert+\lvert \Pi \rvert)}$ via $\theta_1$, 
$G_1$ is an $R$-standard group of dimension $\lvert \Phi \rvert+\lvert \Pi \rvert$. 

\item[(iii)] $\mathcal{L}_{G_1}$ is perfect, unless $p=2$ and $X_l = A_1$ or $C_l$. 
Indeed $\mathcal{L}_{G_1}$ is the $\mathbb{K}$-Lie algebra of type $X_l$. 
\end{itemize}
\end{thm}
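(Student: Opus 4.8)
The plan is to realise $\mathcal{G}_R (X_l)$ inside a matrix group and identify each $G_n$ with a principal congruence subgroup. Since $R$ is local, one may take $\mathcal{G}_R (X_l)$ to be (the group of $R$-points of) the simply connected Chevalley scheme of type $X_l$; fixing a faithful representation over $\mathbb{Z}$ whose weights span the weight lattice (for instance on $\bigoplus_i V(\omega_i)$) and base-changing to $R$, we get an embedding $\rho : \mathcal{G}_R (X_l) \hookrightarrow GL_N (R)$ for some $N$. In a weight basis ordered compatibly with a Borel, $\rho(x_\alpha (t)) \equiv I_N + t X_\alpha$ and $\rho(c_\alpha (1+s)) \equiv I_N + s H_\alpha$ modulo $\mathcal{M}^{2n}$ for $t,s \in \mathcal{M}^n$, where $X_\alpha$ (nilpotent) and $H_\alpha$ (diagonal) are the images of the Chevalley generator $e_\alpha$ and of the coroot $\alpha^\vee$. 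In particular the generators of $G_n$ all lie in $\Gamma_n := \mathcal{G}_R (X_l) \cap (I_N + \mathcal{M}^n \mathbb{M}_N (R))$, so $G_n \subseteq \Gamma_n$; the heart of the matter is to prove the reverse inclusion and the bijectivity of $\theta_n$.

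For this I use the Bruhat theory over $R$. The big cell $U^+ T U^-$ (with $U^{\pm} = \langle x_\alpha(t) : \alpha \in \Phi^{\pm}, t \in R\rangle$, $T = \langle h_{\alpha_i}(u) : u \in R^*\rangle$) is open in $\mathcal{G}_R (X_l)$, isomorphic as a scheme to $U^+ \times T \times U^-$, and contains the identity. If $g \in \Gamma_n$ with $n \geq 1$ then $g$ reduces to $I_N$ modulo $\mathcal{M}$; since $R$ is local, a point of $\mathcal{G}_R(X_l)$ whose reduction lies in an open subscheme lies in it, so $g$ is in the big cell, and the scheme-theoretic factorisation map — being $R$-regular and sending $I \mapsto (I,I,I)$ — gives $g = u^+ h u^-$ with $u^{\pm} \in U^{\pm} \cap \Gamma_n$, $h \in T \cap \Gamma_n$. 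In the height-ordered coordinates on $U^{\pm} \cong \mathbb{A}^{|\Phi^{\pm}|}$ the condition $u^{\pm} \in \Gamma_n$ reads $u^{\pm} = \prod_{\alpha \in \Phi^{\pm}} x_\alpha(t_\alpha)$ with all $t_\alpha \in \mathcal{M}^n$, uniquely; in the coordinates on $T \cong \mathbb{G}_m^l$ it reads $h = \prod_{\alpha \in \Pi} h_\alpha(1+s_\alpha)$ with all $s_\alpha \in \mathcal{M}^n$, uniquely (read $s_\alpha$ off as one less than the scaling factor of $h$ on the weight-$\omega_\alpha$ vector). Hence $g = \theta_n(\underline{t})$, so $\Gamma_n \subseteq \operatorname{im}\theta_n$; conversely $\operatorname{im}\theta_n \subseteq G_n$ since all the elements $x_\alpha(t_\alpha)$ ($t_\alpha \in \mathcal{M}^n$) and $c_{\alpha}(1+s_\alpha)$ ($\alpha \in \Pi$, $s_\alpha \in \mathcal{M}^n$) occurring are generators of $G_n$. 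Therefore $G_n = \Gamma_n = \operatorname{im}\theta_n$, and $\theta_n$ is injective by uniqueness of the big-cell factorisation together with uniqueness of coordinates in each factor. For $n = 1$, multiplication on $\mathcal{G}_R(X_l)$ and the factorisation maps are $R$-morphisms, so in the coordinates $\theta_1$ multiplication on $G_1$ is given by a formal group law over $R$; as the number of coordinates is $|\Phi^+| + |\Pi| + |\Phi^-| = |\Phi| + l$, this exhibits $G_1$ as an $R$-standard group of dimension $|\Phi|+l$. The delicate point throughout is precisely this big-cell input over the ring $R$: that every element congruent to the identity lies in the big cell and factors uniquely there with all three factors again congruent to the identity.

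Part (i) is then easy. The subgroup $\Gamma_n$ is normal in $\mathcal{G}_R(X_l)$, being the intersection with the ($GL_N(R)$-normal) $n$-th principal congruence subgroup of $GL_N(R)$; alternatively, normality follows directly from Chevalley's commutator formula together with the relation $h_\alpha(v)x_\beta(u)h_\alpha(v)^{-1} = x_\beta(v^{\langle\beta,\alpha^\vee\rangle}u)$, since conjugating either type of generator of $G_n$ by an $x_\beta(\pm u)$ ($u \in R$) multiplies it only by further generators of $G_n$ (the extra factors being $x_{i\alpha+j\beta}(c\,t^iu^j)$ with $i,j \geq 1$, hence $t^iu^j \in \mathcal{M}^n$, or $x_\beta(w)$ with $(1+s)^{\pm\langle\beta,\alpha^\vee\rangle} - 1 \in \mathcal{M}^n$). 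Finite index follows from $[\mathcal{G}_R(X_l) : \Gamma_1] \leq |\mathcal{G}_{R/\mathcal{M}}(X_l)| < \infty$ (as $R/\mathcal{M}$ is finite) and $[\Gamma_1 : \Gamma_n] = |\mathcal{M}/\mathcal{M}^n|^{|\Phi|+l} < \infty$, via the bijections $\theta_1, \theta_n$.

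For part (iii), the $R$-standard coordinates $\theta_1$ identify $L(G_1)$ with the $R$-span in $\mathfrak{gl}_N(R)$ of the tangent directions of the coordinate one-parameter subgroups, namely the matrices $\{X_\alpha\}_{\alpha \in \Phi} \cup \{H_{\alpha_i}\}_i$ — a Chevalley basis of $\mathfrak{g}(X_l)$ — with bracket the matrix commutator, which is the Chevalley bracket. Hence $\mathcal{L}_{G_1} = L(G_1) \otimes_R \mathbb{K}$ is the Chevalley Lie algebra $\mathfrak{g}_{\mathbb{K}}(X_l)$ of type $X_l$ over $\mathbb{K}$. It remains to recall that $\mathfrak{g}_{\mathbb{K}}(X_l)$ is perfect unless $(X_l,p) \in \{(A_1,2),(C_l,2)\}$: one always has $h_\alpha = (e_\alpha,f_\alpha)$ in the derived algebra, so the Cartan subalgebra lies in $(\mathfrak{g},\mathfrak{g})$; a simple root vector $e_{\alpha_i}$ lies in $(\mathfrak{g},\mathfrak{g})$ as soon as some Cartan integer $\langle\alpha_i,\alpha_j^\vee\rangle$ is a unit in $\mathbb{K}$, and this fails only when $\operatorname{char}\mathbb{K} = 2$ and every Dynkin-diagram bond meeting $\alpha_i$ is a double bond carrying $-2$ on the $\alpha_i$-side — equivalently only for the long end node of type $C_l$ (the single node when $l=1$) — while every non-simple root vector is an iterated bracket of the simple ones; so $(\mathfrak{g},\mathfrak{g}) = \mathfrak{g}$ outside the stated cases.
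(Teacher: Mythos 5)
The paper does not actually prove this statement: it is imported wholesale as Exercise 13.11 of \cite{DiDuMaSe}, so there is no in-paper argument to compare yours against. Your big-cell argument is the standard route to the group-scheme version of the result, and most of it is sound: that an element reducing to the identity modulo $\mathcal{M}$ lies in the big cell (a principal open subscheme, $R$ local), that the factorisation $u^+hu^-$ is unique with all coordinates in $\mathcal{M}^n$ (regular functions vanishing at the identity take values in $\mathcal{M}^n$ on $\Gamma_n$), and that $L(G_1)$ is the Chevalley $\mathbb{Z}$-form over $R$, are all correct.

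The genuine gap is at your very first step: you silently replace the group the theorem is about --- the group \emph{presented} by the symbols $x_{\alpha}(t)$ subject to the Steinberg relations, which is how the paper defines $\mathcal{G}_R(X_l)$ --- by the group of $R$-points of the simply connected Chevalley scheme. These need not coincide: one only has a natural surjection $\pi:\mathcal{G}_R(X_l)\twoheadrightarrow G_{sc}(R)$ (surjective because $R$ is local), whose kernel is a $K_2$-type central subgroup that is not obviously trivial for a general pro-$p$ domain. Your argument establishes that $\pi(G_n)$ is the congruence subgroup $\Gamma_n$ and that $\pi\circ\theta_n$ is a bijection onto it; this gives injectivity of $\theta_n$ but \emph{not} surjectivity onto $G_n$, since an element of $G_n$ could a priori differ from the corresponding $\theta_n(\underline{t})$ by a nontrivial element of $\ker\pi\cap G_n$. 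Closing this requires a normal-form argument inside the presented group, using only the Steinberg relations (the commutator formula to sort root elements by sign and height, the torus relations to collect the $c_{\alpha}$'s in the middle), showing every word in the generators of $G_n$ rewrites as some $\theta_n(\underline{t})$; combined with your matrix computation this yields both the bijectivity of $\theta_n$ and the injectivity of $\pi$ on $G_1$. Two smaller points: your commutator-formula proof of normality omits conjugation of $x_{\alpha}(t)$ by $x_{-\alpha}(u)$, where the commutator formula does not apply and a separate rank-one computation is needed; and in (iii) the claim that every non-simple root vector is an iterated bracket of simple ones can fail when a structure constant $N_{\alpha,\beta}=\pm(p_{\alpha,\beta}+1)$ vanishes mod $p$ (e.g.\ $G_2$ in characteristic $3$) --- the safer argument is via $(h_{\alpha},e_{\beta})=\langle\beta,\alpha^{\vee}\rangle e_{\beta}$, noting that for odd $p$ not all $\langle\beta,\alpha_i^{\vee}\rangle$ can vanish because $\langle\beta,\beta^{\vee}\rangle=2$, with a separate check in characteristic $2$.
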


For $K$ a field, $\mathcal{G}_{K} (X_l)$ acts on the $K$-Lie algebra 
$\mathcal{L}_{K} (X_l)$ of type $X_l$ by linear automorphisms. 
For $\lbrace E_{\alpha} \rbrace_{\alpha \in \Phi} \cup \lbrace H_{\beta} \rbrace_{\beta \in \Pi}$ 
a Chevalley basis for $\mathcal{L}_{K} (X_l)$, the action may be defined by: 
\begin{itemize}
\item[(i)] $x_{\alpha} (t) (E_{\alpha}) = E_{\alpha}$

\item[(ii)] $x_{\alpha} (t) (E_{-\alpha}) = E_{-\alpha} + t H_{\alpha} - t^2 E_{\alpha}$

\item[(iii)] $x_{\alpha (t)} (H_{\alpha}) = H_{\alpha} - 2 t E_{\alpha}$

\item[(iv)] $x_{\alpha (t)} (H_{\beta}) = H_{\beta} - A_{\beta,\alpha} t E_{\alpha}$

\item[(v)] $x_{\alpha (t)} (E_{\beta}) = E_{\beta} + \sum_{i=1} ^q M_{\alpha,\beta,i} t^i E_{i \alpha + \beta}$

\end{itemize}
for any $\alpha , \beta \in \Phi$ linearly independent and $t \in K$. 
Here $A_{\beta,\alpha} = \frac{2(\beta,\alpha)}{(\alpha,\alpha)}$ is the \emph{Cartan integer}; 
$M_{\alpha,\beta,i}$ are integers and $q \in \mathbb{N}$ is maximal such that $q \alpha + \beta \in \Phi$. 
\\ \\
Now take $K = \mathbb{K}$, the field of fractions of $R$. 
Let $\rho : \mathcal{G}_{\mathbb{K}} (X_l) \rightarrow GL_d (\mathbb{K})$ 
be the above-described action (where d = $\lvert \Phi \rvert+\lvert \Pi \rvert$ 
is the dimension of $\mathcal{L}_{K} (X_l)$). 
Let $\psi : \mathcal{G}_{R} (X_l) \rightarrow \mathcal{G}_{K} (X_l)$ 
be as described in observation (a). 
The \emph{adjoint Chevalley group of type $X_l$ over $R$} 
is defined to be the group $G_{ad} = \rho(\psi(\mathcal{G}_{R} (X_l)))$. 
It is clear from (i)-(v) above that: 
\begin{itemize}
\item[(c)] $G_{ad} \leq GL_d (R)$. 

\item[(d)] For any $\alpha \in \Phi; s,t \in R$ and $n \geq 1$, if $s \equiv t \text{ mod } \mathcal{M}^n$ then 
\linebreak$\rho(x_{\alpha}(s)) \equiv \rho(x_{\alpha}(t)) \text{ mod } \mathcal{M}^n$. 

\item[(e)] In particular, for $t \in \mathcal{M}^n$, $\rho(x_{\alpha}(t)) \equiv I_d \text{ mod } \mathcal{M}^n$. 
\end{itemize}
From observations (b) and (d), it follows that for any $\beta \in \Phi,s \in \mathcal{M}^n$, 
$\rho(c_{\beta}(1+s)) \equiv I_d \text{ mod } \mathcal{M}^n$. 
Combining with observation (e), we have:

\begin{equation} \label{exceptionalquotient}
\rho(\psi(G_n)) \leq K_n := G_{ad} \cap (I_d + \mathbb{M}_d (\mathcal{M}^n)). 
\end{equation}

\begin{proof}[Proof of Theorem \ref{chevdiam}]
If $X_l \in \lbrace A_l , B_l , C_l , D_l \rbrace$, 
$G_{ad}$ is one of $PSL_d (R)$, \linebreak$PSO_d (R)$ or $PSp_{d} (R)$. 
The result then follows as in Section \ref{classicalsection}. 
If not, then letting $G_1$ be as in Theorem \ref{univchev}, 
$G_1$ satisfies the hypothesis of Theorem \ref{Rstandarddiam}, 
so that for some $\tilde{C}_1 , C_2 > 0$, 
\begin{center}
$diam(\mathcal{G}_{R} (X_l)/G_n) \leq \tilde{C}_1 (log \lvert \mathcal{G}_{R} (X_l)/G_n \rvert)^{C_2}$. 
\end{center}
The map $\rho \circ \psi : \mathcal{G}_{R} (X_l) \twoheadrightarrow G_{ad}$ descends, by (\ref{exceptionalquotient}), 
to an epimorphism $\mathcal{G}_{R} (X_l) / G_n \twoheadrightarrow G_{ad} / K_n$. 
By Lemma \ref{easyextnlem} (i), 
\begin{center}
$diam(G_{ad} / K_n) \leq diam(\mathcal{G}_{R} (X_l)/G_n)$. 
\end{center}
Finally, $\lvert \mathcal{G}_{R} (X_l)/G_n \rvert \ll_{R,X_l} \lvert R/\mathcal{M} \rvert^{dn}$ 
and $\lvert G_{ad} / K_n \rvert \geq \lvert R/\mathcal{M} \rvert^n$, 
so \\ $(log\lvert \mathcal{G}_{R} (X_l) / G_n \rvert)^{C_2} \ll (log\lvert G_{ad} / K_n \rvert)^{C_2}$ 
and the result follows (replacing $\tilde{C}_1$ by some larger constant $C_1$). 
\\ \\
The bound we thus obtain for $C_2$ is independent of $X_l$, 
since we need only apply Theorem \ref{Rstandarddiam} for finitely many types $X_l$. 
\end{proof}

\begin{rmrk}
\begin{itemize}
\item[(i)] The method of this section is also applicable to the classical Chevalley groups, 
though does not yield uniformity in the exponent $C_2$. 
In particular we obtain a diameter bound in the case \linebreak$(X_l,p)=(B_l,2)$ or $(D_l,2)$, 
which does not fall under the purview of Theorem \ref{chevdiam}. 
The case $(X_l,p)=(A_1,2)$ or $(C_l,2)$ is beyond the scope of our methods, however, 
because the associated Lie algebras are not perfect. 

\item[(ii)] The best degree $C_2$ in Theorem \ref{chevdiam} 
which we can obtain by the above method is based on taking $A=248$ in Proposition \ref{SolovayKitaev2}, 
because $248$ is the dimension of $\mathcal{G}_{R} (X_l)$ as an $R$-analytic group in the case $X_l = E_8$. 
It is likely that this is far from optimal, and that a much lower degree could be obtained 
via a more direct analysis of the Lie algebras of the exceptional groups, 
akin to that employed for the classical groups in Section \ref{classicalsection}. 
In the case $R = \mathbb{Z}_p$, this has already largely been achieved by Dinai in \cite{Dinai}: 
he showed that for $p > 19$, every element of the $\mathbb{Z}_p$-Lie ring 
associated to an exceptional group can be expressed as the sum of three brackets. 
\end{itemize}
\end{rmrk}

\section{The Nottingham Group} \label{NottinghamSection}

We first collect some facts about generation and commutators in $\mathcal{N}_q$ 
with which to deduce Theorem \ref{nottinghamdiam} from Proposition \ref{SolovayKitaev2}. 
Details can be found in \cite{Camina}; \cite{Klopsch}; \cite{York}. 
For $n \geq 2$ and $\lambda \in \mathbb{F}_q$, define:
\begin{center}
$e_{n,\lambda} (t) = t + \lambda t^{n+1} \in K_n$. 
\end{center}

The elements $e_{n,\lambda}$ form an infinite topological generating set for $\mathcal{N}_q$, as follows: 

\begin{lem} \label{NottinghamGen}
\begin{itemize}
\item[(i)] For any $n \geq 1$ $\lambda , \mu \in \mathbb{F}_q$, 
\begin{center}
$e_{n , \lambda} \cdot e_{n , \mu} \equiv e_{n , \lambda+\mu} \text{ mod }K_{2n}$
\end{center}
(so in particular $e_{n , \lambda} ^k \equiv e_{n , k \lambda} \text{ mod }K_{2n}$ for all $k \in \mathbb{N}$). 
\item[(ii)] $\mathcal{N}_q = \lbrace e_{1,\lambda_1} \cdot e_{2,\lambda_2} \cdots : 
(\lambda_k)_k \in \mathbb{F}_q ^{\mathbb{N}} \rbrace$. 
\end{itemize}
\end{lem}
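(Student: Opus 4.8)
The plan is to reduce both parts to elementary manipulations with truncated power series. The key preliminary observation is that, since $K_n = \{ h \in \mathcal{N}_q : h(t) \equiv t \text{ mod } t^{n+1} \}$ is a normal subgroup, for any $f,g \in \mathcal{N}_q$ we have $f \equiv g \text{ mod } K_n$ if and only if $f(t) \equiv g(t) \text{ mod } t^{n+1}$ as power series; I would record this first and use it throughout.

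For (i), expand $e_{n,\lambda} \cdot e_{n,\mu} = e_{n,\mu}(e_{n,\lambda}(t)) = (t + \lambda t^{n+1}) + \mu (t + \lambda t^{n+1})^{n+1}$, and note that $(t + \lambda t^{n+1})^{n+1} = t^{n+1} + O(t^{2n+1})$ by the binomial theorem, so that $(e_{n,\lambda} \cdot e_{n,\mu})(t) = t + (\lambda + \mu) t^{n+1} + O(t^{2n+1})$; by the preliminary observation this equals $e_{n,\lambda+\mu}$ modulo $K_{2n}$. The parenthetical claim $e_{n,\lambda}^k \equiv e_{n,k\lambda} \text{ mod } K_{2n}$ then follows by an immediate induction on $k$, using that congruence modulo the normal subgroup $K_{2n}$ is compatible with multiplication.

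For (ii) one inclusion is trivial: any product $e_{1,\lambda_1} \cdot e_{2,\lambda_2} \cdots$ converges in $\mathcal{N}_q$, because $e_{k,\lambda_k} \in K_k$ and the $(K_k)_k$ form a neighbourhood basis of the identity, and its value lies in $\mathcal{N}_q$. For the reverse inclusion I would argue by successive approximation. Given $f \in \mathcal{N}_q$, set $f^{(0)} = f$ and proceed recursively: assuming $f^{(m)} \in K_{m+1}$, so that $f^{(m)}(t) = t + \lambda_{m+1} t^{m+2} + O(t^{m+3})$ for a unique $\lambda_{m+1} \in \mathbb{F}_q$, put $f^{(m+1)} = e_{m+1,\lambda_{m+1}}^{-1} \cdot f^{(m)}$. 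The same kind of expansion as in (i) shows that the coefficient of $t^{m+2}$ in $f^{(m+1)}$ vanishes, so $f^{(m+1)} \in K_{m+2}$ and the recursion continues. By construction $f = (e_{1,\lambda_1} \cdots e_{m,\lambda_m}) \cdot f^{(m)}$ for every $m$; since $f^{(m)} \in K_{m+1}$ and $\bigcap_k K_k = \{1\}$, the partial products converge to $f$, i.e. $f = e_{1,\lambda_1} \cdot e_{2,\lambda_2} \cdots$.

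I do not expect any genuine obstacle here, since this is a warm-up lemma feeding into Proposition \ref{SolovayKitaev2}. The only points requiring care are bookkeeping ones: keeping straight the direction of composition in the group law $f \cdot g = g \circ f$ (so that one strips factors off the left and correspondingly left-composes power series), checking that the coefficient of $t^{m+2}$ in $f^{(m)}$ is exactly the parameter needed for the next factor $e_{m+1,\lambda_{m+1}}$, and confirming convergence of the infinite products in the profinite topology on $\mathcal{N}_q$. Once the translation between congruence modulo $K_n$ and agreement of power series to order $t^{n+1}$ is in hand, both parts reduce to direct computation.
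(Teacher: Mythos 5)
Your proof is correct: the paper gives no argument for this lemma (it simply cites Camina, Klopsch and York for these standard facts), and your direct power-series computation --- the translation between congruence mod $K_n$ and agreement of coefficients up to degree $n+1$, the binomial expansion for (i), and successive approximation plus completeness for (ii) --- is the standard one. The only quibble is the parenthetical claim that stripping a factor off the left of the product corresponds to \emph{left}-composing power series: with the convention $f \cdot g = g \circ f$, the step $f^{(m)} \mapsto e_{m+1,\lambda_{m+1}}^{-1} \cdot f^{(m)}$ is $f^{(m)} \mapsto f^{(m)} \circ e_{m+1,\lambda_{m+1}}^{-1}$, i.e.\ \emph{right}-composition, but this does not affect the argument since to leading order the coefficient of $t^{m+2}$ is cancelled either way.
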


The commutator structure of $\mathcal{N}_q$ is well-behaved; 
in particular we verify hypothesis (i) of Proposition \ref{SolovayKitaev2}: 

\begin{lem} \label{NottinghamComms}
Let $m , n \in \mathbb{N}$. 
\begin{itemize}
\item[(i)] Let $g = t + \sum_{k=n+1} ^{\infty} \lambda_k t^k \in K_n \setminus K_{n+1}$, 
$h = t + \sum_{k=m+1} ^{\infty} \mu_k t^k$ \linebreak$\in K_m \setminus K_{m+1}$, so that 
$\lambda_{n+1}, \mu_{m+1} \neq 0$. Then: 
\begin{center}
$[g,h] \equiv t + \lambda_n \mu_m (n-m) t^{m+n+1}\text{ mod } K_{m+n+1}$. 
\end{center}
\item[(ii)] For any $\lambda , \mu \in \mathbb{F}_q$, 
\begin{center}
$[e_{n,\lambda},e_{m,\mu}] \equiv e_{m+n,\lambda \mu} ^{n-m} \text{ mod } K_{min(m+2n,2m+n)}$. 
\end{center}
\item[(iii)] For $p \geq 3$, if $p \nmid (n-m)$ (respectively $p \mid (n-m)$), 
then \linebreak$[K_n , K_m] = K_{m+n}$ (respectively $[K_n , K_m] = K_{m+n+1}$). 
\end{itemize}
\end{lem}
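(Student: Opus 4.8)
The plan is to prove the three parts in order, using (i) as the computational core. For (i), I would first pin down the containment $[K_n,K_m]\subseteq K_{m+n}$: writing a member of $K_n$ as $t+(\text{terms of degree}\geq n+1)$ and likewise for $K_m$, a direct expansion of the commutator of power series shows $[g,h](t)-t$ has all terms of degree $\geq m+n+1$ (this is standard for the present filtration; see \cite{Camina}). Granting this, I would reduce the formula to the generators: writing $g=e_{n,\lambda_{n+1}}g_1$ with $g_1\in K_{n+1}$ and $h=e_{m,\mu_{m+1}}h_1$ with $h_1\in K_{m+1}$, the commutator identities $[ab,c]=[a,c]^b[b,c]$ and $[a,bc]=[a,c][a,b]^c$, together with $[K_{n+1},K_m],[K_n,K_{m+1}]\subseteq K_{m+n+1}$, $[K_{m+n},K_{n+1}]\subseteq K_{m+2n+1}$ and $[K_{m+n},K_{m+1}]\subseteq K_{2m+n+1}$, give $[g,h]\equiv[e_{n,\lambda_{n+1}},e_{m,\mu_{m+1}}]\bmod K_{m+n+1}$. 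It then remains to expand $e_{n,\lambda}(t)=t+\lambda t^{n+1}$, $e_{m,\mu}(t)=t+\mu t^{m+1}$ and their compositional inverses modulo $t^{m+n+2}$: the degree-$(n+1)$ and degree-$(m+1)$ contributions cancel, as they must in a commutator, and the surviving coefficient of $t^{m+n+1}$ comes out to $(n-m)\lambda\mu$ — equivalently, the associated graded Lie ring of $(\mathcal{N}_q,(K_i)_i)$ carries the Witt-type bracket $[\bar e_n,\bar e_m]=(n-m)\bar e_{m+n}$.

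For (ii), by part (i) and Lemma \ref{NottinghamGen}(i) it suffices to show that, writing $[e_{n,\lambda},e_{m,\mu}](t)=t+\sum_{k\geq m+n}a_k(\lambda,\mu)\,t^{k+1}$, the coefficient $a_k$ vanishes for $m+n<k<\min(m+2n,2m+n)$; say $m\leq n$, so the bound is $2m+n$. The commutator is computed by a universal power series identity with integer coefficients, so $\lambda$ and $\mu$ may be treated as indeterminates. Conjugating this identity by the substitution $t\mapsto ct$ (which sends $e_{k,\nu}$ to $e_{k,\nu c^{-k}}$) shows $a_k$ is homogeneous of weight $k$ once $\lambda,\mu$ are given the weights $n,m$. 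Since $a_k$ also vanishes whenever $\lambda=0$ or $\mu=0$, it is a $\mathbb{Z}$-combination of monomials $\lambda^i\mu^j$ with $i,j\geq1$ and $ni+mj=k$; for $m\leq n$ there is no such monomial with $m+n<k<2m+n$, so $a_k=0$, and reduction mod $p$ concludes the step. (When $m=n$ the weight-$2n$ coefficient is $(n-m)\lambda\mu=0$, so $[e_{n,\lambda},e_{n,\mu}]\in K_{3n}$, consistent with the statement.)

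For (iii), part (i) gives $[K_n,K_m]\subseteq K_{m+n}$; if moreover $p\mid(n-m)$ then the leading commutator coefficient $(n-m)\lambda_{n+1}\mu_{m+1}$ vanishes in $\mathbb{F}_q$, upgrading this to $[K_n,K_m]\subseteq K_{m+n+1}$ (the cases $g\in K_{n+1}$ or $h\in K_{m+1}$ being absorbed via $[K_{n'},K_{m'}]\subseteq K_{n'+m'}$). For the reverse inclusion, put $N_0=m+n$ if $p\nmid(n-m)$ and $N_0=m+n+1$ if $p\mid(n-m)$; since $[K_n,K_m]$ is closed and $\bigcap_N K_N=1$, it is enough to show $[K_n,K_m]$ surjects onto $K_{N_0}/K_N$ for every $N\geq N_0$, which I would prove by induction on $N$. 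At layer $N$ the task is to place $K_N/K_{N+1}\cong(\mathbb{F}_q,+)$ inside $[K_n,K_m]K_{N+1}/K_{N+1}$. Writing $N=n'+m'$ with $n'=n+a\geq n$, $m'=m+b\geq m$ and $a+b=N-m-n$, the numbers $n'-m'=(n-m)+(2a-(N-m-n))$ form an arithmetic progression of common difference $2$; for $N>m+n$ it has at least two terms, and since no two numbers differing by $2$ can both be divisible by the odd prime $p$, some $n'-m'$ is a unit modulo $p$, while for the base case $N=N_0=m+n$ (when $p\nmid(n-m)$) the single value $n-m$ is itself a unit by hypothesis. For such a pair $(n',m')$, part (i) yields $[e_{n',\lambda},e_{m',\mu}]\in[K_n,K_m]$ with $[e_{n',\lambda},e_{m',\mu}]\equiv e_{N,(n'-m')\lambda\mu}\bmod K_{N+1}$, and letting $\lambda,\mu$ range over $\mathbb{F}_q$ sweeps out all of $K_N/K_{N+1}$.

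The step I expect to be hardest is the extra precision needed in (ii): expanding $[e_{n,\lambda},e_{m,\mu}]$ by brute force past its leading term produces cancellations that are awkward to verify directly, and the clean reason that nothing survives strictly between degrees $m+n+1$ and $\min(m+2n,2m+n)$ really does seem to be the weighted-homogeneity argument — which requires care that the underlying commutator identity is universal, so that $\lambda,\mu$ and the dilation parameter $c$ may be treated as indeterminates before one specialises to $\mathbb{F}_q$. By comparison, the arithmetic used in (iii) (an odd prime cannot divide two numbers differing by $2$) and the layer-by-layer lifting are routine.
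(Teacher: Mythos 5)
Your argument is correct, but note that the paper itself offers no proof of this lemma: it is stated as a known fact with the details deferred to the cited sources (Camina's survey, Klopsch, York), so there is nothing internal to compare against. Your write-up is a sound self-contained substitute. The reduction of (i) to the generators $e_{n,\lambda}$ via the standard commutator identities and the containment $[K_a,K_b]\subseteq K_{a+b}$ is the usual route, and you implicitly correct the paper's typo (the coefficient should of course be $\lambda_{n+1}\mu_{m+1}(n-m)$, not $\lambda_n\mu_m(n-m)$). The weighted-homogeneity argument for (ii) --- treating $\lambda,\mu$ as indeterminates of weights $n,m$, using the universality of the compositional-inverse coefficients over $\mathbb{Z}[\lambda,\mu]$ and conjugation by $t\mapsto ct$ to force every monomial of $a_k$ to have weight $k$, then observing that no weight lies strictly between $n+m$ and $\min(m+2n,2m+n)$ --- is exactly the clean way to avoid the brute-force cancellation check, and is essentially how this is done in the literature. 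Two minor caveats: in (iii) your surjectivity-onto-every-layer argument produces an element of $K_{N_0}$ as a convergent infinite product of commutators, so it establishes $K_{N_0}\subseteq[K_n,K_m]$ only for the \emph{closed} commutator subgroup; that is the standard convention in this context and is all the paper uses (boundedly many commutators per layer is what Section 5 verifies separately, with $A=2$), but it is worth stating the convention explicitly. Also, in the base case of (iii) with $p\nmid(n-m)$ you should note that the single admissible pair at level $N=m+n$ is $(n,m)$ itself, which is what you do; the two-term progression only becomes available from level $m+n+1$ onward, which is precisely where it is needed when $p\mid(n-m)$.
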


We shall show that, provided $p \geq 3$, for $n \leq m \leq 2n$ satisfying $p \nmid (m-n)$
every element of $K_{m+n}$ may be expressed, modulo $K_{m+2n}$, 
as $[g_1 , h_1][g_2 , h_2]$ for some $g_i \in K_m$, $h_i \in K_n$. 
Now, for any $\epsilon \in (0,1)$, $n \geq 5$ and $i=1,2,3$, 
there exist $n_i , m_i \in \mathbb{N}$ such that $n_i + m_i =(2+i)n$; 
$\frac{n}{3} (2+i+ \epsilon) \leq n_i \leq m_i \leq \frac{2 n}{3} (2+i)$ and $m_i - n_i \in \lbrace 1,2 \rbrace$. 
We therefore satisfy hypothesis (ii) of Proposition \ref{SolovayKitaev2} 
with $\epsilon$ arbitrary; $A=2$; $M_1=5$; $M_2=0$. 
\\ \\
For any $\lambda_i , \nu \in \mathbb{F}_q$; $K,M,N \in \mathbb{N}$ with $N \leq M$; 
applying Lemma \ref{NottinghamComms} (iii) and an easy induction, we have: 
\begin{center}
$[g,e_{M,\nu}] 
\equiv [e_{N,\lambda_1},e_{M,\nu}] \cdots [e_{N+K-1,\lambda_K},e_{M,\nu}] \text{ mod } K_{2N+M+1}$
\end{center}
where $g = e_{N,\lambda_1} \cdots e_{N+K-1,\lambda_K}$. 
Moreover, by Lemma \ref{NottinghamGen} (i) and Lemma \ref{NottinghamComms} (ii), 
\begin{center}
$[e_{N+i,\lambda_{i+1}},e_{M,\nu}] \equiv (e_{M+N+i,\lambda_{i+1} \nu})^{(N-M)+i} \text{ mod } K_{2N+M+2i} K_{N+2M+i}$
\\
$\equiv e_{M+N+i,\lambda_{i+1} \nu ((N-M)+i)} \text{ mod } K_{2M+2N+2i}$. 
\end{center}
Hence for any $\lambda_i , \mu_i \in \mathbb{F}_q$, setting:
\begin{center}
$g_1 = e_{n,\lambda_1} \cdots e_{2n-1,\lambda_n}$
\\ 
$g_2 = e_{n,\mu_1} \cdots e_{2n-2,\mu_{n-1}}$
\end{center}
we have: 
\begin{center}
$[g_1 , e_{m,1}] [g_2, e_{m+1,1}] \equiv (\prod_{i=0} ^{n-1} e_{n+m+i,\lambda_{i+1}(n+i-m)} )(\prod_{i=1} ^{n-1} e_{n+m+i,\mu_i (n-m-2+i)})$
\\
$\equiv e_{n+m,\lambda_1 (n-m)} (\prod_{i=1} ^{n-1} e_{n+m+1,\lambda_{i+1}(n-m+i)+\mu_i (n-m-2+i)}) \text{ mod } K_{2n+m}$
\end{center}

since $K_{n+m}/K_{2n+m}$ is abelian. 
$p \nmid (n-m)$, and since $p \geq 3$, for each $1 \leq i \leq n-1$, 
$p$ divides at most one of $n-m+i,n-m-2+i$. 
Hence by varying the $\lambda_i$ and $\mu_i$, using the form described in Lemma \ref{NottinghamGen} (ii), 
we can express any element of $K_{n+m}$ modulo $K_{2n+m}$. 

\section{Limit Theorems for Random Walks} \label{RWSection}

The purpose of this section is to prove Corollaries \ref{rwstandard}, \ref{rwpadic} and \ref{rwnottingham}. 
Let $\Gamma$ be a countable group. For $\phi,\psi \in l^2 (\Gamma)$, with $\phi$ of finite support, 
we define the convolution $\phi * \psi \in l^2 (\Gamma)$ by:
\begin{center}
$(\phi * \psi)(g) = \sum_{h \in \Gamma} \phi(h) \psi(h^{-1} g)$. 
\end{center}
For $l \in \mathbb{N}$, we define the \emph{convolution power} $\phi^{* l}$ of $\phi$ recursively by:
\begin{center}
$\phi^{* 0} = \chi_e$; $\phi^{* (l+1)} = \phi^{* l} * \phi$. 
\end{center}
Let $S \subseteq \Gamma$ be a finite symmetric set. 
Let $X_1 , X_2 , \ldots$ be a sequence of independent random variables, each with law:
\begin{center}
$\frac{1}{\lvert S \rvert} \chi_S \in l^2 (\Gamma)$. 
\end{center}
For $l \in \mathbb{N}$, the simple random walk $Y_l = X_1 \cdots X_l$ 
on $(\Gamma,S)$ at time $l$ has law $\frac{1}{\lvert S \rvert^l} \chi_S ^{* l}$. 
We relate the asymptotics of the distributions of the $Y_l$ to diameters of finite groups via the following method: 
\\ \\
For $G$ a finite group, $S \subseteq G$ a symmetric generating set, 
define  a linear operator $A_S : l^2 (G) \rightarrow l^2 (G)$ (called the \emph{adjacency operator}) by:
\begin{center}
$A_S (f) = (\frac{1}{\lvert S \rvert} \chi_S)  * f$. 
\end{center}
Let $l_0 ^2 (G) \leq l^2 (G)$ be the space of functions of mean zero on $G$ 
(that is, the orthogonal complement of the constant functions), 
and note that $l_0 ^2 (G)$ is preserved by $A_S$. 
Let $\rho$ be the norm of $A_S \mid_{l_0 ^2 (G)}$ in the Banach space 
$B(l_0 ^2 (G))$ of bounded linear operators on $l_0 ^2 (G)$. 
We define the \emph{spectral gap} of the pair $(G,S)$ to be the quantity $1 - \rho$. 
As we intimated in the introduction, a large spectral gap implies rapid mixing of the random walk on $(G,S)$. 
Specifically: 

\begin{lem} \label{mixingbound}
For any $l \in \mathbb{N}$; $g , h \in G$, 
\begin{center}
$\lvert \langle A_S ^l \chi_g , \chi_h \rangle - \frac{1}{\lvert G \rvert} \rvert \leq  \rho^l$. 
\end{center}
\end{lem}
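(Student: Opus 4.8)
The plan is to exploit the orthogonal decomposition $l^2(G) = \mathbb{C}\mathbf{1} \oplus l_0^2(G)$, where $\mathbf{1}$ is the constant function $1$. First I would record the two structural facts on which everything rests. Since $\frac{1}{\lvert S \rvert} \chi_S$ is a probability measure on $G$, convolving with it fixes constants, so $A_S \mathbf{1} = \mathbf{1}$; and, as already observed in the text, $A_S$ preserves $l_0^2(G)$. Hence the above decomposition is $A_S$-invariant, $A_S^l$ acts as the identity on the line $\mathbb{C}\mathbf{1}$, and by submultiplicativity of the operator norm on the invariant subspace $l_0^2(G)$ we have $\lVert A_S^l \mid_{l_0^2(G)} \rVert \leq \lVert A_S \mid_{l_0^2(G)} \rVert^l = \rho^l$. (Note that self-adjointness of $A_S$ — which does hold, as $S$ is symmetric — is not needed here; only submultiplicativity of the norm is used.)

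Next I would write $\chi_g = \frac{1}{\lvert G \rvert} \mathbf{1} + \phi_g$ and $\chi_h = \frac{1}{\lvert G \rvert} \mathbf{1} + \phi_h$ with $\phi_g, \phi_h \in l_0^2(G)$, this being precisely the orthogonal projection onto $l_0^2(G)$. Applying $A_S^l$ and using $A_S^l \mathbf{1} = \mathbf{1}$ gives $A_S^l \chi_g = \frac{1}{\lvert G \rvert} \mathbf{1} + A_S^l \phi_g$, and since $A_S^l \phi_g \in l_0^2(G)$ is orthogonal to $\mathbf{1}$ we obtain
\[
\langle A_S^l \chi_g , \chi_h \rangle = \frac{1}{\lvert G \rvert} \langle \mathbf{1} , \chi_h \rangle + \langle A_S^l \phi_g , \phi_h \rangle = \frac{1}{\lvert G \rvert} + \langle A_S^l \phi_g , \phi_h \rangle ,
\]
so the problem reduces to bounding $\lvert \langle A_S^l \phi_g , \phi_h \rangle \rvert$.

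Finally I would apply Cauchy--Schwarz together with the norm bound from the first paragraph: $\lvert \langle A_S^l \phi_g , \phi_h \rangle \rvert \leq \lVert A_S^l \phi_g \rVert \, \lVert \phi_h \rVert \leq \rho^l \lVert \phi_g \rVert \, \lVert \phi_h \rVert$. A one-line computation gives $\lVert \phi_g \rVert^2 = \lVert \chi_g \rVert^2 - \frac{1}{\lvert G \rvert} = 1 - \frac{1}{\lvert G \rvert} \leq 1$, and likewise for $\phi_h$, so $\lVert \phi_g \rVert \, \lVert \phi_h \rVert \leq 1$ and the whole expression is at most $\rho^l$, as claimed. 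There is no genuine obstacle in this argument; the only point requiring a little care is the one flagged above, namely that one must not invoke a spectral decomposition or self-adjointness of $A_S$ — the invariance of $l_0^2(G)$ under $A_S$, submultiplicativity of the operator norm there, and the orthogonal projection onto the constants are all that is needed.
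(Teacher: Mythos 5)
Your proof is correct and is essentially the argument the paper gives: decompose $\chi_g$ into its constant part $\frac{1}{\lvert G\rvert}\chi_G$ plus a mean-zero part, use that $A_S$ fixes constants and has norm $\rho$ on $l_0^2(G)$, and finish with Cauchy--Schwarz and the bound $\lVert \chi_g - \frac{1}{\lvert G\rvert}\chi_G\rVert_2 \leq 1$. The only (harmless) difference is that you also project $\chi_h$ onto $l_0^2(G)$, whereas the paper pairs $A_S^l(\chi_g - \frac{1}{\lvert G\rvert}\chi_G)$ directly against $\chi_h$.
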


\begin{proof}
Noting that $\chi_g - \frac{1}{\lvert G \rvert} \chi_G \in l_0 ^2 (G)$, 
\begin{center}
$\lvert \langle A_S ^l \chi_g , \chi_h \rangle - \frac{1}{\lvert G \rvert} \rvert
 = \lvert \langle A_S ^l (\chi_g - \frac{1}{\lvert G \rvert} \chi_G) , \chi_h \rangle \rvert $
 \\
 $\leq \lVert A_S ^l (\chi_g - \frac{1}{\lvert G \rvert} \chi_G) \rVert_2$
\end{center}
by the Cauchy-Schwarz inequality. 
The result follows, since: 
\begin{center}
$\lVert \chi_g - \frac{1}{\lvert G \rvert} \chi_G \rVert_2 \leq 1$. 
\end{center}
\end{proof}

Finally, $\rho$ is related to $diam(G,S)$ via the following inequality (see \cite{DiSC} for a proof): 

\begin{propn} \label{diamgap}
$\frac{diam(G,S)-1}{log \lvert G \rvert} \leq \frac{1}{1-\rho} \leq \lvert S \rvert diam(G,S)^2$. 
\end{propn}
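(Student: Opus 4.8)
The displayed inequality packages together two estimates of quite different character, so the plan is to prove them separately. The left-hand bound $\frac{diam(G,S)-1}{\log\lvert G\rvert}\leq\frac{1}{1-\rho}$ is the easy half: it follows from Lemma \ref{mixingbound} together with the trivial observation that a walk of length $l$ started at the identity cannot reach a group element lying at Cayley-graph distance greater than $l$ from the identity. The right-hand bound $\frac{1}{1-\rho}\leq\lvert S\rvert\,diam(G,S)^2$ is a Poincar\'e (spectral-gap) inequality, which I would prove by the method of canonical paths, exploiting the homogeneity of the Cayley graph so that one family of geodesics emanating from the identity, translated around $G$, suffices.

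For the lower bound, write $D=diam(G,S)$ and assume $D\geq 2$ and $\rho<1$ (otherwise the inequality is trivial, its left side being $\leq 0$ or its right side $+\infty$). By minimality of $D$ we have $B_S(D)=G$ while $B_S(D-1)\neq G$, and since $S$ generates $G$ the Cayley graph is connected, so some $g\in G$ lies at distance exactly $D$ from the identity. Now $A_S\chi_e=\frac{1}{\lvert S\rvert}\chi_S$ is supported in $B_S(1)$, and an immediate induction shows $A_S^{D-1}\chi_e$ is supported in $B_S(D-1)$; hence $\langle A_S^{D-1}\chi_e,\chi_g\rangle=0$. Lemma \ref{mixingbound} then gives $\frac{1}{\lvert G\rvert}=\bigl\lvert\,0-\tfrac{1}{\lvert G\rvert}\,\bigr\rvert\leq\rho^{D-1}$, so $(D-1)\log(1/\rho)\leq\log\lvert G\rvert$. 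Since $\log(1/\rho)=-\log\bigl(1-(1-\rho)\bigr)\geq 1-\rho$, this yields $D-1\leq\frac{\log\lvert G\rvert}{1-\rho}$, as required.

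For the upper bound it is enough to prove the Poincar\'e inequality $\langle(I-A_S)f,f\rangle\geq\frac{1}{\lvert S\rvert\,D^2}\lVert f\rVert_2^2$ for every $f\in l_0^2(G)$: the supremum of $\langle A_Sf,f\rangle/\lVert f\rVert_2^2$ over such $f$ is then at most $1-(\lvert S\rvert D^2)^{-1}$, which equals $\rho$ when $A_S$ is non-negative on $l_0^2(G)$, the remaining case being dealt with by running the same argument for the lazy operator $\tfrac{1}{2}(I+A_S)$ (which leaves the diameter unchanged), in line with the normalisation of \cite{DiSC}. To prove the inequality, fix $f$ of mean zero; expanding $\lvert f(x)-f(y)\rvert^2$ and using $\sum_x f(x)=0$ gives $\sum_{x,y\in G}\lvert f(x)-f(y)\rvert^2=2\lvert G\rvert\,\lVert f\rVert_2^2$. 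For each $g\in G$ choose $u_1(g),\ldots,u_D(g)\in S\cup\{1\}$ with $g=u_D(g)\cdots u_1(g)$ (possible since $B_S(D)=G$), and set $w_0=x$ and $w_i=u_i(g)w_{i-1}$, so that $w_D=gx$. Telescoping and Cauchy--Schwarz give $\lvert f(x)-f(gx)\rvert^2\leq D\sum_{i=1}^{D}\lvert f(w_{i-1})-f(u_i(g)w_{i-1})\rvert^2$. Summing over $x\in G$, for fixed $g$ and $i$ the map $x\mapsto w_{i-1}$ is a bijection of $G$, so the inner sum equals $\sum_{v\in G}\lvert f(v)-f(u_i(g)v)\rvert^2\leq\sum_{s\in S}\sum_{v\in G}\lvert f(v)-f(sv)\rvert^2=2\lvert S\rvert\,\langle(I-A_S)f,f\rangle$. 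Summing now over $i$ and over $g$, and using $\sum_{x,g}\lvert f(x)-f(gx)\rvert^2=\sum_{x,y}\lvert f(x)-f(y)\rvert^2$, we obtain $2\lvert G\rvert\,\lVert f\rVert_2^2\leq 2\lvert S\rvert D^2\lvert G\rvert\,\langle(I-A_S)f,f\rangle$, which is the assertion.

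The telescoping and congestion count in the final step are entirely routine. The step I expect to demand the most care — and the only genuine obstacle to a self-contained argument — is the passage from the Poincar\'e bound, which controls only the top of the spectrum of $A_S$ on $l_0^2(G)$, to a bound on the full spectral radius $\rho=\lVert A_S|_{l_0^2(G)}\rVert$: without an aperiodicity hypothesis the inequality as literally displayed fails on bipartite Cayley graphs (where $\rho=1$), and \cite{DiSC} builds in the normalisation — passage to a lazy walk — that renders it correct in general, which I would adopt.
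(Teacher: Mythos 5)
The paper offers no proof of this proposition at all; it simply refers to \cite{DiSC}. Your argument is therefore a self-contained substitute rather than a variant of something in the text, and both halves are essentially the standard Diaconis--Saloff-Coste argument. Both are correct as far as they go: the left-hand bound via the support of $A_S^{D-1}\chi_e$ together with Lemma \ref{mixingbound} is complete, and the canonical-paths computation is also correct, yielding $\langle A_S f , f \rangle \leq (1-\frac{1}{\lvert S \rvert D^2}) \lVert f \rVert_2^2$ for all $f \in l_0^2(G)$, i.e.\ the bound $\frac{1}{1-\beta_1} \leq \lvert S \rvert D^2$ for the largest eigenvalue $\beta_1$ of $A_S$ on $l_0^2(G)$.

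The caveat in your last paragraph is the one substantive issue, and you are right to flag it: with $\rho$ defined as the operator norm of $A_S \mid_{l_0^2(G)}$, the right-hand inequality is literally false on bipartite Cayley graphs (already for $G = \mathbb{Z}/2\mathbb{Z}$ with $S$ its non-trivial element one has $\rho = 1$ while $\lvert S \rvert \, diam(G,S)^2 = 1$), so this is an imprecision in the statement as recorded rather than a defect of your method. Be aware, though, that your proposed repair --- running the argument for $\frac{1}{2}(I+A_S)$ --- proves a statement about a different operator and does not by itself recover the displayed inequality for $\rho$; for that one needs a separate quantitative lower bound on $1+\beta_{min}$ (e.g.\ via odd closed paths, as in \cite{DiSC}), which you have not supplied and which is not automatic even in the paper's applications, where the quotients are $p$-groups for odd $p$ and the Cayley graphs are therefore merely non-bipartite. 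If instead one adopts the lazy normalisation, it must be carried through Lemma \ref{mixingbound} and the corollaries of Section \ref{RWSection} (harmless, since the word-balls $B_S(n)$ already contain the identity), but that amounts to amending the surrounding statements, not proving the proposition exactly as written.
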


In particular, for $diam(G,S) \leq C_1 log ^{C_2} \lvert G \rvert$, 
\begin{center}
$1-\rho \geq \frac{1}{\lvert S \rvert C_1 ^2 log^{2 C_2} \lvert G \rvert}$
\end{center}
so, setting $C_3 = \lvert S \rvert C_1 ^2$, and applying Lemma \ref{mixingbound}, we have:
\begin{center}
$\lvert \langle A_S ^l \chi_g , \chi_h \rangle - \frac{1}{\lvert G \rvert} \rvert 
\leq (1 - \frac{1}{C_3 log^{2 C_2} \lvert G \rvert})^l$. 
\end{center}
Recall that $(1-\frac{1}{x})^x$ is an increasing function for $x > 1$, 
converging to $e^{-1}$ as $x \rightarrow \infty$. 
Hence, setting $l = C_3 log^{2 C_2 + C_4} \lvert G \rvert$, for some $C_4 >0$, we deduce:
\begin{center}
$\lvert \langle A_S ^l \chi_g , \chi_h \rangle - \frac{1}{\lvert G \rvert} \rvert 
\leq e^{- log^{C_4} \lvert G \rvert}$. 
\end{center}
Moreover, the quantity $\lvert \langle A_S ^l \chi_g , \chi_h \rangle - \frac{1}{\lvert G \rvert} \rvert$ 
is non-increasing, so this last inequality holds for any $l \geq C_3 log^{2 C_2 + C_4} \lvert G \rvert$. 

\begin{proof}[Proof of Corollary \ref{rwstandard}]
We may identify: 
\begin{center}
$G / K_{N+1} \cong \lbrace \lambda_1 x_1 + \cdots + \lambda_d x_d : 
\lambda_1 , \ldots , \lambda_d \in R / \mathcal{M}^N \rbrace 
\cong (R / \mathcal{M}^N)^d$, 
\end{center}
as a set, so $\lvert G / K_{N+1} \rvert = \lvert R / \mathcal{M} \rvert^{dN}$ and:
\begin{center}
$\Big\lvert \mathbb{P} \big[ \lVert L_1 ^{(l)} - \lambda_1 \rVert , \ldots , \lVert L_d ^{(l)} - \lambda_d \rVert
 \leq c^{N+1} \big] - \frac{1}{\lvert R / \mathcal{M} \rvert^{dN}} \Big\rvert
 = \lvert \langle A_S ^l \chi_e , \chi_g \rangle - \frac{1}{\lvert G / K_{N+1} \rvert} \rvert$
\end{center}
where $g = \lambda_1 x_1 + \cdots + \lambda_d x_d \in G / K_{N+1}$. 
The result is now a consequence of Theorem \ref{Rstandarddiam} 
and the discussion following Proposition \ref{diamgap}, taking:
\begin{center}
$C=2C_2, C'=C_4 , C''=C_3 (d \cdot log \lvert R / \mathcal{M} \rvert)^{2C_2 + C_4} , 
C'''= (d \cdot log \lvert R / \mathcal{M} \rvert)^{C_4}$. 
\end{center}
\end{proof}

\begin{proof}[Proof of Corollary \ref{rwpadic}]
By Theorem \ref{padicfiltgen}, 
\begin{center}
$G / K_{N+1} \cong \langle K_{N+1} a_1 \rangle \times \ldots \times \langle K_{N+1} a_d \rangle 
\cong (\mathbb{Z} / p^N \mathbb{Z})^d$, 
\end{center}
as a set, so $\lvert G / K_{N+1} \rvert = p^{dN}$ and:
\begin{center}
$\Big\lvert \mathbb{P} \big[ \lVert M_1 ^{(l)} - \mu_1 \rVert , \ldots , \lVert M_d ^{(l)} - \mu_d \rVert
 \leq p^{-N-1} \big] - \frac{1}{p^{dN}} \Big\rvert
 = \lvert \langle A_S ^l \chi_e , \chi_g \rangle - \frac{1}{\lvert G / K_{N+1} \rvert} \rvert$
\end{center}
where $g = K_{N+1} a_1 ^{\mu_1} \cdots a_d ^{\mu_d} \in G / K_{N+1}$. 
The result now follows from Theorem \ref{padicdiam} 
and the discussion following Proposition \ref{diamgap}, taking: 
\begin{center}
$C=2C_2, C'=C_4 , C''=C_3 (d \cdot log (p))^{2C_2 + C_4} , C'''= (d \cdot log(p))^{C_4}$. 
\end{center}
\end{proof}

\begin{proof}[Proof of Corollary \ref{rwnottingham}]
Letting $G_N = \mathcal{N}_q / K_N$, $\lvert G_N \rvert = q^{N-1}$, so: 
\begin{center}
$\Big\lvert \mathbb{P} \big[ A_2 ^{(l)} = \alpha_2 , \ldots , A_N ^{(l)} = \alpha_N \big] - \frac{1}{q^{N-1}} \Big\rvert
=\lvert \langle A_S ^l \chi_e , \chi_g \rangle - \frac{1}{\lvert G_N \rvert} \rvert$, 
\end{center}
where $g = t + \sum_{i=2} ^N \alpha_i t^i$. 
The result follows from Theorem \ref{nottinghamdiam} and the discussion following Proposition \ref{diamgap}, taking:
\begin{center}
$C=2C_2, C'=C_4 , C''=C_3 (log(q))^{2C_2 + C_4}  , C'''= {log(q)}^{C_4} $. 
\end{center}
\end{proof}

\end{document}